\makeatletter\@addtoreset {equation}{section}\makeatother
\theoremstyle{plain}
\newtheorem{theorem}{Theorem}[section]
\newtheorem{lemma}[theorem]{Lemma}
\newtheorem{proposition}[theorem]{Proposition}
\newtheorem{corollary}[theorem]{Corollary}
\newtheorem{assumption}[theorem]{Assumption}
\theoremstyle{remark}
\newtheorem{remark}[theorem]{Remark}
\newcommand{\R}{\mathbb{R}}
\renewcommand{\phi}{\varphi}
\newcommand{\bmU}{\bm{U}}
\newcommand{\bmv}{\bm{v}}
\newcommand{\bmz}{\bm{z}}
\newcommand{\bmk}{\bm{k}}
\newcommand{\bmkk}{\bm{k_1}}
\newcommand{\AP}{A_\gamma}
\newcommand{\BP}{B_\gamma}
\newcommand{\PP}{P_{c,\gamma}}
\newcommand{\FF}{\mathcal{F}}
\newcommand{\II}{\bm{I}}
\begin{document}

\title{\bf Orbital stability of periodic waves \\
in the class of reduced Ostrovsky equations}

\author{Edward R. Johnson$^{1}$ and Dmitry E. Pelinovsky$^{2}$ \\
{\small $^{1}$ Department of Mathematics, University College London, London, United Kingdom, WC1E 6BT} \\
{\small $^{2}$ Department of Mathematics, McMaster University, Hamilton, Ontario, Canada, L8S 4K1}  }

\date{\today}
\maketitle

\begin{abstract}
Periodic travelling waves are considered in the class of
reduced Ostrovsky equations that describe low-frequency
internal waves in the presence of rotation.
The reduced Ostrovsky equations with either quadratic or cubic nonlinearities
can be transformed to integrable equations of the Klein--Gordon type
by means of a change of coordinates.
By using the conserved momentum and energy  as well as an additional
conserved quantity due to integrability, we prove that small-amplitude periodic
waves are orbitally stable with respect to subharmonic perturbations,
with period equal to an integer multiple of the period of the wave.
The proof is based on construction of a Lyapunov functional, which is convex
at the periodic wave and is conserved in the time evolution.
We also show numerically that convexity of the Lyapunov functional
holds for periodic waves of arbitrary amplitudes.
\end{abstract}

\section{Introduction}
\label{sec:intro}

The Ostrovsky equation with the quadratic nonlinearity was originally derived by L.A. Ostrovsky
\cite{Ostrov} to model small-amplitude long waves in a rotating fluid of finite depth.
The same model was extended to include the cubic nonlinearities in the context of
the internal gravity waves \cite{Grimshaw1,Grimshaw2}.

We consider the class of reduced Ostrovsky equations, for which 
the high-frequency dispersion is neglected. In particular,
we are interested in the models with either quadratic or cubic
nonlinearities. When a suitable scaling is selected, the
two different evolution equations can be written in the normalized forms
\begin{equation}
\label{redOst}
(u_t + u u_x)_x = u
\end{equation}
and
\begin{equation}
\label{redModOst}
\left( u_t + \frac{1}{2} u^2 u_x \right)_x = u,
\end{equation}
where $u(x,t) ; \R \times \R \to \R$. The modified reduced Ostrovsky equation (\ref{redModOst})
can be considered as the defocusing version of the short pulse equation
\begin{equation}
\label{shortPulse}
\left( u_t - \frac{1}{2} u^2 u_x \right)_x = u.
\end{equation}
The short-pulse equation was derived in \cite{Schafer1,Schafer2,PelSchneider} in the context of propagation
of ultra-short pulses with few cycles on the pulse width.

Local solutions of the Cauchy problem associated with the class of reduced Ostrovsky equations
exist in the space $H^s(\mathbb{R}) \cap \dot{H}^{-1}(\mathbb{R})$ for $s > \frac{3}{2}$ \cite{SSK10}. For the sufficiently
large initial data, the local solutions break in a finite time, similar to the inviscid Burgers equation
with quadratic or cubic nonlinearities \cite{LPS1,LPS2}.
However, if the initial data $u_0$ is small in a suitable norm, then
local solutions are continued for all times in the same space.
For the quadratic equation (\ref{redOst}), the global solutions
exist if $u_0 \in H^3(\mathbb{R})$ with $1 - 3 u_0''(x) > 0$ for every $x \in \mathbb{R}$ \cite{GH,GP}.
For the cubic equation (\ref{redModOst}), the global solutions exist if $u_0 \in H^2(\mathbb{R})$
with $|u_0'(x)| < 1$ for every $x \in \mathbb{R}$ \cite{JG,PelSak}.

This work is devoted to the orbital stability analysis of periodic travelling waves
of the reduced Ostrovsky equations (\ref{redOst}) and (\ref{redModOst}).
Periodic travelling waves can be found in the explicit form of the Jacobi elliptic functions,
after a change of coordinates \cite{GH}. Alternatively, the periodic waves can be characterized as
critical points of the conserved energy \cite{Pava}. If the periodic waves
are constrained minimizers of energy with respect to the perturbations of the same period
subject to the fixed conserved momentum,
then they are orbitally stable with respect to
such perturbations (see, e.g., the recent works \cite{Pava1,Pava2,Gallay,Haragus,Nataly1,Nataly2}).

Unfortunately, the periodic waves are not typically constrained minimizers of
energy with respect to perturbations with period equal to an integer multiple of the wave period
(such perturbations are called subharmonic). It becomes therefore difficult (if not impossible)
to prove the nonlinear orbital stability of periodic waves with respect to subharmonic perturbations
even if the spectral stability of the periodic waves is established (e.g., by computing
numerically the Floquet--Bloch spectrum of the linearization at the periodic wave).

However, the reduced Ostrovsky equations (\ref{redOst}) and (\ref{redModOst}) can be transformed
to the integrable equations of the Klein--Gordon type with a change of coordinates \cite{GH,JG}.
As a result, these evolution equations have a bi-infinite sequence of conserved quantities beyond
the conserved energy and momentum. By combining several conserved quantities in a linear combination,
one can try obtaining a conserved quantity, for which the periodic waves are unconstrained minimizers
with respect to subharmonic perturbations. These minimizers are degenerate due to the spatial translation symmetry,
but the degeneracy can be overcome by using the spatial translation parameter.

These ideas to obtain nonlinear orbital stability of periodic waves in integrable equations
have been developed in the works of B. Deconinck and his collaborators \cite{b1,Decon1,b2,Decon3,Decon2}.
Using the eigenfunctions of Lax operators arising in the inverse scattering method, 
a complete set of Floquet--Bloch eigenfunctions satisfying the linearization of the integrable equations
at the periodic wave is constructed and the quadratic forms associated with
the higher-order energy functionals are computed at the Floquet--Bloch eigenfunctions.
If the quadratic form for a linear combination of the higher-order energy functionals is shown to
be positive for every Floquet--Bloch eigenfunction of the linearized integrable equation,
then one can conjecture that the corresponding energy functional can be used as the Lyapunov function
in the proof of the orbital stability of unconstrained minimizers. In such a way,
the nonlinear orbital stability of periodic waves with respect to subharmonic perturbations
was obtained for the Korteweg--de Vries \cite{b1,Decon2,b2}, modified Korteweg--de Vries \cite{Decon3}, and the defocusing
nonlinear Schr\"{o}dinger \cite{Decon1} equations.

In the recent work \cite{GP1}, the proof of nonlinear orbital stability
was revisited for the periodic waves in the defocusing nonlinear Schr\"{o}dinger equation.
In particular, it was proven rigorously that the periodic waves are unconstrained minimizers
for a linear combination of two natural energy functionals, one of which is defined in $H^1$
and the other one is defined in $H^2$. Compared to the work \cite{Decon1},
the proof was developed directly for the Hessian operators associated with the higher-order Lyapunov
functional. Positivity of the corresponding quadratic form is proved for small-amplitude periodic waves by means of the
perturbation theory and for large-amplitude periodic waves by means of new factorization identities
for the defocusing NLS equation and a continuation argument.

In this work, we extend the nonlinear orbital stability analysis to the class of reduced Ostrovsky equations, which include
the integrable equations (\ref{redOst}) and (\ref{redModOst}). Similarly to the scopes of \cite{GP1},
we would like to work with a linear combination of two energy functionals, one of which is the energy of the reduced Ostrovsky
equations defined in $H^{-1} \cap L^p$ space with $p = 3$ for (\ref{redOst}) and $p = 4$ for (\ref{redModOst}).
The other energy functional is a subject of the free will. For instance, we could look at
the higher-order energy defined in a subset of $H^{-2}$ space or the higher-order energy
defined in $H^s$ with $s = 3$ for (\ref{redOst}) or $s = 2$ for (\ref{redModOst}).
However, we will show that the Casimir-type functional, which is the main building block
for integrability of the corresponding equations \cite{Br,BS}, is sufficient for the construction
of the Lyapunov functional for the periodic wave. The second variation of the Lyapunov functional
is positive for the $H^s \cap H^{-1}$ perturbations to the periodic wave with $s = 2$
for (\ref{redOst}) and $s = 1$ for (\ref{redModOst}), thus providing stability of the periodic waves
in the reduced Ostrovsky equations (\ref{redOst}) and (\ref{redModOst}).

We prove the main result for the small-amplitude periodic waves, when computations are performed by the perturbation
theory and do not require a heavy use of integrability technique. We also show numerically that convexity
of the Lyapunov functional extend to the periodic waves of larger amplitudes until the terminal
amplitude is reached, where the periodic wave profile becomes piecewise smooth.

The paper is organized as follows. The conserved quantities and the main stability result for the reduced
Ostrovsky equations are described in Section 2 for (\ref{redOst}) and in Section 3 for
(\ref{redModOst}). The general proof of positivity of a linear combination of two energy functionals
for the small-amplitude periodic waves
is developed in Section 4, where applications of the general method to the two integrable equations
(\ref{redOst}) and (\ref{redModOst}) are also given. Section 5 reports numerical results for the
periodic waves of arbitrary amplitudes. In the concluding Section 6, we discuss
how the stability analysis based on higher-order conserved quantities fails for the short-pulse equation (\ref{shortPulse}),
where the small-amplitude periodic waves are known to be unstable with respect to side-band modulations.

\section{Main result for the reduced Ostrovsky equation (\ref{redOst})}

In the context of the reduced Ostrovsky equation (\ref{redOst}), we
define travelling periodic wave solutions and conserved quantities,
compute the second variation of the energy functional at the periodic wave profile,
and present the main result on positivity of the second variation for a certain
linear combination of the energy functionals.

\subsection{Travelling waves}

Travelling $2L$-periodic solutions of the reduced Ostrovsky equation (\ref{redOst})
can be expressed in the normalized form
\begin{equation}
\label{trav-wave}
u(x,t) = \frac{L^2}{\pi^2} U(z), \quad z = \frac{\pi}{L} x - \frac{L}{\pi} \gamma t,
\end{equation}
where $U(z)$ is a $2\pi$-periodic solution of the second-order differential equation
\begin{equation}
\label{second-order}
\frac{d}{dz} \left[ (\gamma - U) \frac{dU}{dz} \right] + U(z) = 0,
\end{equation}
and the parameter $\gamma$ is proportional to the wave speed. Note that
the $2\pi$-periodic function $U$ satisfying (\ref{second-order}) has the zero mean value.
By the translational invariance and reversibility of the differential equation (\ref{second-order}),
$U$ can be selected to be an even function of $z$.

The second-order equation (\ref{second-order}) can be integrated with the first-order invariant
\begin{equation}
\label{first-order}
I = \frac{1}{2} (\gamma - U)^2 \left( \frac{dU}{dz} \right)^2 + \frac{\gamma}{2} U^2 - \frac{1}{3} U^3 = {\rm const}.
\end{equation}
The following result defines a family of the $2\pi$-periodic even solutions $U$
of the differential equation (\ref{second-order}) in the small-amplitude limit.

\begin{lemma}
\label{lemma-small-amplitude}
For every $\gamma > 1$ such that $|\gamma - 1|$ is sufficiently small,
there exists a unique $2\pi$-periodic, even, smooth solution $U$,
which can be parameterized by the amplitude parameter $a$, such that
\begin{equation}
\label{Stokes}
U(z) = a \cos(z) + \frac{1}{3} a^2 \cos(2z) + \frac{3}{16} a^3 \cos(3z) + \mathcal{O}_{C^{\infty}_{\rm per}}(a^4)
\quad \mbox{\rm as} \quad a \to 0,
\end{equation}
where parameter $a$ determines unique values of the parameters $\gamma$ and $I$
given by the asymptotic expansions
\begin{equation}
\label{gamma}
\gamma = 1 + \frac{1}{6} a^2 + \mathcal{O}(a^4) \quad \mbox{\rm as} \quad a \to 0
\end{equation}
and
\begin{equation}
\label{E-parameter}
I = \frac{1}{2} a^2 + \mathcal{O}(a^4) \quad \mbox{\rm as} \quad a \to 0.
\end{equation}
\end{lemma}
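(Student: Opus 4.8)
The plan is to use a Lyapunov--Schmidt reduction (equivalently, the Crandall--Rabinowitz bifurcation-from-a-simple-eigenvalue approach) to construct the periodic-wave family branching off the zero solution at $\gamma = 1$. First I would recast the second-order equation \eqref{second-order} as a fixed-point problem on the space of $2\pi$-periodic, even functions with zero mean. Linearizing \eqref{second-order} at $U = 0$ with $\gamma = 1$ gives the operator $\mathcal{L}_0 V = V'' + V$, whose kernel on the even, zero-mean $2\pi$-periodic functions is exactly one-dimensional, spanned by $\cos(z)$. (The mode $\cos(0) = 1$ is killed by the zero-mean constraint; all $\cos(kz)$ with $k \ge 2$ lie in the range since $1 - k^2 \neq 0$.) This one-dimensionality of the kernel is precisely what makes the bifurcation simple and the solution branch unique up to the choice of parametrization.

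The key steps, in order, are: (i) fix the amplitude parameter by imposing the normalization $\langle U, \cos z \rangle = a\pi$ (i.e. the $\cos z$-Fourier coefficient of $U$ equals $a$), which pins down the otherwise-free scaling along the kernel; (ii) write $U = a\cos z + W$ with $W$ in the spectral complement (even, zero-mean, no $\cos z$ component) and, simultaneously, expand $\gamma = 1 + \gamma_2 a^2 + \dots$; (iii) project \eqref{second-order} onto the range of $\mathcal{L}_0$ to solve for $W = W(a)$ by the implicit function theorem (the restriction of $\mathcal{L}_0$ there is boundedly invertible on $C^\infty_{\rm per}$), obtaining $W = \mathcal{O}(a^2)$; (iv) project onto $\ker \mathcal{L}_0 = \mathrm{span}\{\cos z\}$ to get the bifurcation equation, a scalar relation $g(a,\gamma) = 0$ with $\partial_\gamma g \neq 0$, which determines $\gamma = \gamma(a)$. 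Carrying the expansion to the needed order and matching Fourier coefficients order by order in $a$ yields \eqref{Stokes}: at $\mathcal{O}(a^2)$ the forcing term $-\frac{d}{dz}(U U')$ contributes $\frac{d}{dz}(a^2 \cos z \sin z) = a^2 \frac{d}{dz}(\tfrac12 \sin 2z) = a^2 \cos 2z$, and solving $(1-4)c_2 = $ (this forcing) gives the $\frac13 a^2 \cos 2z$ term; the $\mathcal{O}(a^3)$ balance simultaneously forces $\gamma_2 = \tfrac16$ (removing the secular $\cos z$ resonance) and produces the $\frac{3}{16}a^3 \cos 3z$ term. Finally \eqref{E-parameter} follows by substituting \eqref{Stokes}--\eqref{gamma} into the first integral \eqref{first-order} and evaluating, e.g., at a point where $U' = 0$ or by averaging; the leading term is $\frac{\gamma}{2}\langle U^2\rangle$-type and gives $I = \tfrac12 a^2 + \mathcal{O}(a^4)$.

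Smoothness of $U$ in $z$ is automatic: each Lyapunov--Schmidt iterate is a trigonometric-polynomial correction plus a smooth remainder, and elliptic regularity for $\mathcal{L}_0$ (a constant-coefficient ODE) bootstraps $W \in C^\infty_{\rm per}$; the remainder estimate $\mathcal{O}_{C^\infty_{\rm per}}(a^4)$ comes from the quantitative form of the implicit function theorem applied in the Banach scale of $C^k_{\rm per}$ spaces. Evenness is preserved throughout because the nonlinearity $\frac{d}{dz}[(\gamma - U)U']$ maps even functions to even functions, so the whole construction can be carried out in the subspace of even functions, and uniqueness within that subspace (given the normalization) is part of the implicit function theorem output.

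The main obstacle I anticipate is bookkeeping rather than conceptual: one must verify that the constraint $\gamma > 1$ (as opposed to $\gamma < 1$) is the side on which real periodic solutions exist — this is dictated by the sign $\gamma_2 = \tfrac16 > 0$, so that $\gamma = 1 + \tfrac16 a^2 > 1$ for small real $a \neq 0$ — and to be careful that the quasilinear structure of \eqref{second-order} (the coefficient $\gamma - U$ multiplying $U''$) does not obstruct the implicit function theorem. Since $\gamma - U = 1 + \mathcal{O}(a) > 0$ uniformly for small $a$, the equation stays non-degenerate (genuinely second order with positive leading coefficient) on the relevant range, so this is not a real difficulty, merely a point that must be checked. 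The other mild technical point is isolating the correct normalization so that "the amplitude parameter $a$" is unambiguous; the choice above (coefficient of $\cos z$) makes \eqref{Stokes} hold on the nose.
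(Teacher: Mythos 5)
Your proposal follows essentially the same route as the paper: the paper's proof is exactly a Lyapunov--Schmidt (bifurcation from the simple eigenvalue $\cos z$ in the even, zero-mean subspace) reduction, citing the analogous Proposition 2.1 of \cite{GP1}, with the coefficient computations omitted as straightforward. Your order-by-order bookkeeping reproduces the correct values ($U_2 = \tfrac13\cos 2z$, $\gamma_2 = \tfrac16$ from removing the $\cos z$ resonance at order $a^3$, $U_3$-coefficient $\tfrac{3}{16}$, and $I = \tfrac12 a^2 + \mathcal{O}(a^4)$ by evaluating the first integral where $U'=0$), so the proposal is correct and consistent with the paper.
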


\begin{proof}
Justification of the asymptotic expansions (\ref{Stokes}), (\ref{gamma}), and (\ref{E-parameter}) 
and the proof of existence of the $2\pi$-periodic even solutions $U$ of the differential equation
(\ref{second-order}) can be achieved by the standard method
of Lyapunov--Schmidt reductions, see, e.g., the proof of Proposition 2.1 in \cite{GP1}. The solution $U$ is smooth both
in variables $z$ and $a$ because the differential equation (\ref{second-order}) is smooth
in $U$ and the wave profile satisfies $U(z) < \gamma$ for every $z \in \mathbb{R}$ and for small amplitudes $a$.

The representation (\ref{Stokes}) is typically referred to as the Stokes expansion
of a small-amplitude periodic wave. Since the period of the periodic wave has been normalized
to $2\pi$, the parameter $a$ of the wave amplitude
also parameterize the wave speed $\gamma > 1$ and the first-order invariant $I > 0$
according to the asymptotic expansions (\ref{gamma}) and (\ref{E-parameter}).

Computations of the coefficients in the expansions (\ref{Stokes}), (\ref{gamma}), and (\ref{E-parameter}) 
are straightforward and hence omitted. 
\end{proof}

\begin{remark}
In the statement of Lemma \ref{lemma-small-amplitude} and further on, we will use the following notations,
which are typical in the asymptotic analysis. If $I$ depends smoothly on the small parameter $a$,
then $I = \mathcal{O}(a^2)$ means that the limit $I/a^2$ as $a \to 0$ exists.
Similarly, if $U \in C^{\infty}_{\rm per}$ is a $2\pi$-periodic smooth function that also depends
smoothly on the small parameter $a$, then $U = \mathcal{O}_{C^{\infty}_{\rm per}}(a^2)$ means
that the limit $\sup_{z \in [0,2\pi]} |U^{(n)}(z)|/a^2$ as $a \to 0$ exists for every $n \in \mathbb{Z}$
(the limiting value as $a \to 0$ may depend on $n$).
\end{remark}

\begin{remark}
The $2\pi$-periodic solution of the second-order equation (\ref{second-order}) can be
expressed in the closed analytical form by using a change of coordinates \cite{GH}.
The transformation is given by
\begin{equation}
\label{wave-transformation}
U(z) = {\bf u}(\zeta), \quad z = \int_0^{\zeta} (\gamma - {\bf u}(\zeta')) d \zeta',
\end{equation}
where ${\bf u}$ satisfies the following second-order equation
\begin{equation}
\label{KdV-ODE}
\frac{d^2 {\bf u}}{d \zeta^2} + (\gamma - {\bf u}) {\bf u} = 0.
\end{equation}
The latter equation arises for travelling waves of the KdV equation and it admits
explicit solutions for periodic waves given by the Jacobi elliptic ${\rm cn}$ functions \cite{GH}.
The family of $2\pi$-periodic smooth solutions of the differential equation (\ref{second-order})
exists in variables $U$ and $z$ as long as the transformation (\ref{wave-transformation})
is invertible, that is, as long as ${\bf u}(\zeta) < \gamma$ for every $\zeta$.
\end{remark}

\begin{remark}
It follows from the recent work in \cite{Vill1,Vill2} that the amplitude-to-period map 
for the differential equations (\ref{second-order}) and (\ref{first-order}) is strictly monotonically increasing. 
With the scaling transformation, this result can be used to prove monotonicity
of the parameters $\gamma$ and $I$ in terms of the wave amplitude $a$, which are expanded
asymptotically as $a \to 0$ by (\ref{gamma}) and (\ref{E-parameter}).
\end{remark}

\begin{remark}
The periodic wave with profile $U$ exists for every $\gamma \in \left(1,\frac{\pi^2}{9}\right)$ \cite{GH}.
As $\gamma \to \frac{\pi^2}{9}$, the periodic wave is no longer smooth at the end points of the fundamental period.
In variables $U$ and $z$, the limiting wave has the parabolic profile:
\begin{equation}
\label{parabolic-wave}
\gamma = \frac{\pi^2}{9} : \quad U(z) = \frac{3 z^2 - \pi^2}{18},
\end{equation}
such that $U(\pm \pi) = \gamma$. As $\gamma \to \frac{\pi^2}{9}$, the parabolic wave profile (\ref{parabolic-wave})
yields the value
\begin{equation}
\label{first-order-final}
I = \frac{1}{6} \gamma^3 = \frac{\pi^6}{4374}.
\end{equation}
From the first-order invariant (\ref{first-order}) at the solutions of the second-order
equation (\ref{second-order}), we deduce that
\begin{equation}
\label{identity}
1 - 3 U'' = \frac{\gamma^3 - 6 I}{(\gamma - U)^3}.
\end{equation}
The $2\pi$-periodic solution $U$ of
the second-order equation (\ref{second-order}) with $\gamma \in \left(1,\frac{\pi^2}{9}\right)$
satisfies the constraint $\gamma^3 > 6 I$.
This follows from the identity (\ref{identity}) since $1 - 3 U''(z) > 0$ and
$U(z) < \gamma$ for all $z$ if $\gamma \in \left(1,\frac{\pi^2}{9}\right)$ \cite{GH}.
\end{remark}

\subsection{Conserved quantities}

Let us now recall \cite{LPS1} that local solutions $u$ of the reduced Ostrovsky equation (\ref{redOst})
in the space $H^s \cap \dot{H}^{-1}$ with $s > \frac{3}{2}$ has conserved momentum $Q(u) = \| u \|_{L^2}^2$
and energy
\begin{equation}
\label{energy}
E(u) = \| \partial_x^{-1} u \|_{L^2}^2 + \frac{1}{3} \int u^3 dx,
\end{equation}
where the integration is extended over the wave period and $\partial_x^{-1}$ denotes
the zero-mean anti-derivative of the function $u$. Writing all the integrals in
the normalized variable $z$ and neglecting the scaling factors (which is equivalent
to the choice $L = \pi$), we consider the first energy functional in the form
\begin{equation}
\label{first-Lyapunov}
S_{\gamma}(u) := E(u) - \gamma Q(u).
\end{equation}
The following result establishes equivalence between the Euler--Lagrange equations for $S_{\gamma}$ and
the second-order differential equation (\ref{second-order}). The result is proved by
a non-trivial adaptation of the variational calculus in the space of $2\pi$-periodic functions with zero mean.

\begin{lemma}
\label{lemma-critical-points}
The set of critical points of the energy functional $S_{\gamma}(u)$ in the space $L^2_{\rm per, zero} \cap L^{\infty}_{\rm per}$
such that $u(z) < \gamma$ for every $z \in \mathbb{R}$ is equivalent to the set of $2\pi$-periodic
smooth solutions of the differential equation (\ref{second-order}),
where $L^2_{\rm per, zero}$ denotes the space of square-integrable, $2\pi$-periodic, and zero-mean functions.
\end{lemma}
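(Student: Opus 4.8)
The plan is to move back and forth between the variational equation $D S_\gamma(u)=0$ on the zero-mean subspace and the differential equation (\ref{second-order}) by passing to an antiderivative, with a bootstrap argument supplying the smoothness. First I would check that on $X := L^2_{\rm per,zero}\cap L^\infty_{\rm per}$, normed by $\|u\|_{L^2}+\|u\|_{L^\infty}$, the functional $S_\gamma$ is well defined (the bounded period gives $X\hookrightarrow L^3_{\rm per}$ for the cubic term, while $\partial_z^{-1}u\in H^1_{\rm per}$ for zero-mean $u$ handles the quadratic term) and Fr\'echet differentiable, with
\[
D S_\gamma(u)[v] = \int_0^{2\pi}\bigl(-2\,\partial_z^{-2}u + u^2 - 2\gamma u\bigr)\, v\dd z, \qquad v\in L^2_{\rm per,zero},
\]
where the term $-2\,\partial_z^{-2}u$ arises from two integrations by parts in the variation of $\|\partial_z^{-1}u\|_{L^2}^2$, using that $\partial_z^{-1}$ is skew-adjoint on $L^2_{\rm per,zero}$ and that all boundary terms vanish by periodicity. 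Since the admissible directions $v$ fill the whole linear subspace $L^2_{\rm per,zero}$, $u$ is a critical point if and only if the bracket is $L^2$-orthogonal to every zero-mean function, i.e.\ $-2\,\partial_z^{-2}u + u^2 - 2\gamma u = c$ for some constant $c\in\R$. This is exactly the ``non-trivial adaptation'' alluded to in the statement: because $S_\gamma$ lives only on the zero-mean subspace, the relevant gradient is the $L^2$-projection of the formal gradient, and it is this projection that produces the free constant $c$.

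Next I would bootstrap a critical point $u\in X$ to a smooth solution of (\ref{second-order}). Setting $w:=\partial_z^{-2}u$, one has $w\in H^2_{\rm per}\hookrightarrow C^1_{\rm per}$ and the Euler--Lagrange relation reads $(\gamma-u)^2 = \gamma^2 + c + 2w$, whose right-hand side is continuous; hence $u$ agrees a.e.\ with the continuous function $\gamma-\sqrt{\gamma^2+c+2w}$ (the minus sign forced by $u<\gamma$), and compactness of the period together with the constraint $u<\gamma$ gives $\gamma^2+c+2w\ge\delta^2>0$. Thus $\sqrt{\cdot}$ is $C^\infty$ on the relevant range and the standard loop $u\in C^k_{\rm per}\Rightarrow w\in C^{k+2}_{\rm per}\Rightarrow u\in C^{k+2}_{\rm per}$ upgrades $u$ to $C^\infty_{\rm per}$. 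Differentiating $u^2-2\gamma u = c + 2w$ twice and using $w''=u$ gives $(u^2-2\gamma u)''=2u$, which rearranges to $\tfrac{d}{dz}\bigl[(\gamma-u)\tfrac{du}{dz}\bigr]+u=0$, i.e.\ (\ref{second-order}).

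Conversely, let $U$ be a smooth $2\pi$-periodic solution of (\ref{second-order}) with $U<\gamma$; as noted after (\ref{second-order}) it has zero mean, so $U\in X$ and $\partial_z^{-1}U,\partial_z^{-2}U$ are again $2\pi$-periodic. Writing (\ref{second-order}) as $(U^2-2\gamma U)''=2U$ and integrating twice yields $U^2-2\gamma U = 2\,\partial_z^{-2}U + a z + b$, and since the left-hand side and $\partial_z^{-2}U$ are $2\pi$-periodic we must have $a=0$, leaving $U^2-2\gamma U - 2\,\partial_z^{-2}U$ equal to a constant. By the formula for $D S_\gamma$ this says precisely that $D S_\gamma(U)=0$, so $U$ is a critical point in $X$, which closes the equivalence.

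The one genuinely delicate point is the first step: making the variational calculus rigorous on the \emph{zero-mean} subspace — justifying the integrations by parts that produce $\partial_z^{-2}$ and recognising that criticality there means equality to a constant rather than to zero — and then closing the regularity argument, where the constraint $u<\gamma$ is exactly what allows the quadratic identity $(\gamma-u)^2 = \gamma^2 + c + 2\,\partial_z^{-2}u$ to be solved smoothly for $u$. Everything else is routine: the differentiability estimates, the skew-adjointness of $\partial_z^{-1}$, and the algebraic rearrangement into (\ref{second-order}).
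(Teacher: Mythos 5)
Your proof is correct and follows essentially the same route as the paper's: both reduce criticality on the zero-mean subspace to the doubly integrated equation $u^2-2\gamma u-2\partial_z^{-2}u=\mathrm{const}$ (the paper writes this as $P_0U^2-2\gamma U-2\partial_z^{-2}U=0$, computing the first variation via Fourier series and Parseval rather than your direct integration by parts), then bootstrap smoothness using $u<\gamma$ and differentiate to recover (\ref{second-order}), with the converse obtained by integrating twice and using periodicity to kill the linear term. Your explicit inversion $u=\gamma-\sqrt{\gamma^2+c+2\,\partial_z^{-2}u}$ merely makes the bootstrap more concrete than the paper's brief appeal to ``bootstrapping arguments''; the substance is the same.
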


\begin{proof}
Let us consider the function $U$ and its perturbation $v$
in the space $L^2_{\rm per, zero}$. We represent these functions
in the Fourier form with zero mean
\begin{eqnarray}
U(z) = \sum_{n \in \mathbb{N}} A_n \cos(nz) + B_n \sin(nz),
\label{Usum}
\end{eqnarray}
and
\begin{eqnarray*}
v(z) = \sum_{n \in \mathbb{N}} a_n \cos(nz) + b_n \sin(nz).
\end{eqnarray*}
Similarly, we expand
$$
U^2(z) = C_0 + \sum_{n \in \mathbb{N}} C_n \cos(nz) + D_n \sin(nz),
$$
where $C_0 \neq 0$. Expanding $S_{\gamma}(U+v) - S_{\gamma}(U)$ to the linear order in $v$ and using
Parseval's equality, we obtain the first variation of $S_{\gamma}$, denoted by $\delta S_{\gamma}$, in
the explicit form
$$
\delta S_{\gamma} = \pi \sum_{n \in \mathbb{N}} a_n \left[ 2 n^{-2} A_n - 2 \gamma A_n + C_n \right]
+ b_n \left[ 2 n^{-2} B_n - 2 \gamma B_n + D_n \right].
$$
Although $C_0 \neq 0$, it does not enter to the first variation $\delta S_{\gamma}$. The Euler--Lagrange equations
for vanishing first variation in the Fourier coefficients take the form
\begin{equation}
\label{first-variation}
 2 n^{-2} A_n - 2 \gamma A_n + C_n  = 0, \quad 2 n^{-2} B_n - 2 \gamma B_n + D_n = 0, \quad n \in \mathbb{N}.
\end{equation}
Note that these equations are not equivalent to the integral equation
\begin{equation*}
U( U - 2\gamma) - 2 \partial_z^{-2} U = 0,
\end{equation*}
because the mean value of $U^2$ is nonzero. They are, however, equivalent to the equation
\begin{equation}
\label{double-integration}
P_0 U^2 - 2\gamma U - 2 \partial_z^{-2} U = 0,
\end{equation}
where $\partial_z^{-2}$ denotes the zero-mean second anti-derivative of the $2\pi$-periodic function $U$
and $P_0$ is the projection operator from $L^2_{\rm per}$ to $L^2_{\rm per, zero}$.

Multiplying (\ref{first-variation}) by $\frac{n}{2}$ for $n \in \mathbb{N}$
and taking the inverse Fourier transform, we obtain
\begin{equation}
\label{first-integration}
(U - \gamma) U'(z) - \partial_z^{-1} U = 0,
\end{equation}
which is well-defined in $L^2_{\rm per, zero}$ if $U \in L^2_{\rm per, zero}$.
Equation (\ref{first-integration}) is nothing but the derivative of (\ref{double-integration}) in $z$.
By bootstrapping arguments, if $U(z) < \gamma$ for every $z \in \mathbb{R}$,
then the solution $U$ is actually smooth in $z$.
Hence, equation (\ref{first-integration}) can be differentiated with respect to $z$, after which
it becomes the second-order differential equation (\ref{second-order}). Thus, the equivalence is
established in one direction.

In the opposite direction, if $U$ is a $2\pi$-periodic smooth solution of the differential equation
(\ref{second-order}), then it has zero mean and belongs to the space $L^2_{\rm per,zero} \cap L^{\infty}_{\rm per}$.
Then, working backwards from (\ref{second-order}) to (\ref{first-integration}), (\ref{double-integration}), and
(\ref{first-variation}), we obtain $\delta S_{\gamma} = 0$ for every perturbation in $L^2_{\rm per,zero}$.
\end{proof}

Let us now consider the other conserved quantities of the reduced Ostrovsky equation (\ref{redOst}) \cite{BS,GP}.
If $u \in H^3$ with $1 - 3 u_{xx} \geq F_0 >  0$ for every $x \in \mathbb{R}$ and some constant $F_0$,
then the reduced Ostrovsky equation (\ref{redOst}) has conserved higher-order energy given by
\begin{equation}
\label{higher-order-energy-2}
H(u) = \int \frac{(u_{xxx})^2}{(1 - 3 u_{xx})^{7/3}} dx,
\end{equation}
as well as the conserved Casimir-type functional
\begin{equation}
\label{higher-order-energy-1}
C(u) = \int (1 - 3 u_{xx})^{1/3} dx.
\end{equation}
The conserved quantity $C(u)$ is a mass integral associated to the
quantity $f := (1 - 3 u_{xx})^{1/3}$, which determines
the Jacobian of the coordinate transformation that brings the reduced
Ostrovsky equation to the integrable Klein--Gordon-type equation \cite{BS,GH,GP}.

Let us define the second energy functional in the form
\begin{equation}
\label{second-Lyapunov}
R_{\Gamma}(u) := C(u) - \Gamma Q(u),
\end{equation}
where parameter $\Gamma$ should be chosen in such a way that
the same periodic wave $U$ given by solution of the second-order
equation (\ref{second-order}) becomes a critical point of $R_{\Gamma}(u)$
in function space $H^s_{\rm per}$ with $s > \frac{5}{2}$ (so that $u_{zz}$
is a bounded and continuous function by Sobolev's embedding).
The following lemma specified the proper definition of $\Gamma$.

\begin{lemma}
\label{lemma-higher-order-energy}
The  set of $2\pi$-periodic smooth solutions $U$ of the differential equation (\ref{second-order})
such that $U(z) < \gamma$ and $1 - 3 U''(z) > 0$ for every $z \in \mathbb{R}$
yields critical points of the energy functional $R_{\Gamma}(u)$ in the space $H^s_{\rm per}$ with $s > \frac{5}{2}$
if and only if
\begin{equation}
\label{Gamma-defined}
\Gamma := \frac{-1}{(\gamma^3 - 6I)^{2/3}},
\end{equation}
where $\gamma^3 - 6I > 0$.
\end{lemma}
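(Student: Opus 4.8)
The plan is to compute the first variation of the functional $R_\Gamma(u) = C(u) - \Gamma Q(u)$ from (\ref{higher-order-energy-1}) and (\ref{second-Lyapunov}) at a $2\pi$-periodic smooth solution $U$ of (\ref{second-order}) obeying the running hypotheses $U(z) < \gamma$ and $1 - 3U''(z) > 0$ for every $z$, and then to read off the unique $\Gamma$ for which this first variation vanishes. First I would record that $s > \frac52$ forces, by Sobolev's embedding, $u \mapsto u_{zz}$ to be continuous from $H^s_{\rm per}$ into $C^0_{\rm per}$, and that $1 - 3U_{zz}$ is bounded below by a positive constant on the period; hence $t \mapsto t^{1/3}$ acts smoothly on the relevant range of $1 - 3u_{zz}$, $C$ is continuously differentiable near $U$, and, after two integrations by parts justified by periodicity and the smoothness of $U$,
\[
\delta C(U)[v] = -\int (1 - 3U_{zz})^{-2/3}\, v_{zz}\, dz = -\int \partial_z^2\big[(1 - 3U_{zz})^{-2/3}\big]\, v\, dz .
\]
Since $\delta Q(U)[v] = 2\int U v\, dz$, the critical-point condition $\delta R_\Gamma(U)[v] = 0$ for all $v \in H^s_{\rm per}$ becomes the Euler--Lagrange equation
\[
\partial_z^2\big[(1 - 3U_{zz})^{-2/3}\big] + 2\Gamma U = 0 .
\]

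The next step is to simplify this equation using the first-order invariant (\ref{first-order}). The identity (\ref{identity}) gives $1 - 3U'' = (\gamma^3 - 6I)(\gamma - U)^{-3}$, which together with $U < \gamma$ also forces $\gamma^3 - 6I > 0$ (as already noted below (\ref{identity})), and therefore
\[
(1 - 3U_{zz})^{-2/3} = \frac{(\gamma - U)^2}{(\gamma^3 - 6I)^{2/3}} .
\]
It then remains to compute $\partial_z^2\big[(\gamma - U)^2\big] = 2U_z^2 - 2(\gamma - U)U_{zz}$ and to eliminate the second derivative through (\ref{second-order}) written as $(\gamma - U)U_{zz} - U_z^2 + U = 0$, which collapses the expression to $\partial_z^2\big[(\gamma - U)^2\big] = 2U$. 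Substituting back, the Euler--Lagrange equation reduces to
\[
2U\left(\frac{1}{(\gamma^3 - 6I)^{2/3}} + \Gamma\right) = 0 ,
\]
and since $U \not\equiv 0$ this holds on the whole period if and only if $\Gamma = -(\gamma^3 - 6I)^{-2/3}$, which is exactly (\ref{Gamma-defined}); reading the same chain of equivalences backwards yields the converse implication.

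I expect the only genuinely delicate point to be the justification of the first-variation formula for $C$ — that is, the Fréchet differentiability of $u \mapsto \int (1 - 3u_{zz})^{1/3}\, dz$ at $U$ in the $H^s_{\rm per}$ topology with the claimed derivative — which is precisely where the hypotheses $s > \frac52$ and $1 - 3U'' > 0$ are used. I would also note that the argument is insensitive to whether the perturbations $v$ range over all of $H^s_{\rm per}$ or only over its zero-mean subspace (the natural phase space for (\ref{redOst})): restricting to zero-mean $v$ would only require $\partial_z^2[(1 - 3U_{zz})^{-2/3}] + 2\Gamma U$ to be constant, but since both $\partial_z^2[(1 - 3U_{zz})^{-2/3}]$ and $U$ have zero mean over the period, that constant must vanish. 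Everything else is the short explicit computation above, driven only by (\ref{second-order}) and (\ref{identity}).
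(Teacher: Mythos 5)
Your proposal is correct and follows essentially the same route as the paper: compute the Euler--Lagrange equation of $R_\Gamma = C - \Gamma Q$ and then verify it on the wave family using the identity (\ref{identity}) coming from the first-order invariant together with the second-order equation (\ref{second-order}). The only difference is cosmetic but pleasant: by keeping the Euler--Lagrange equation in the divergence form $\partial_z^2[(1-3U'')^{-2/3}] + 2\Gamma U = 0$ and substituting (\ref{identity}) first, your verification reduces to $\partial_z^2[(\gamma-U)^2] = 2U$, whereas the paper expands to the fourth-order equation (\ref{fourth-order}) and needs the derivative relations (\ref{derivative-1})--(\ref{derivative-2}).
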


\begin{proof}
The Euler--Lagrange equation for $R_{\Gamma}(u)$ is given by the fourth-order
differential equation
\begin{equation}\label{fourth-order}
(1 - 3 u'') u'''' + 5 (u''')^2 + \Gamma u (1 - 3 u'')^{8/3} = 0.
\end{equation}
Solutions to equation (\ref{fourth-order}) are smooth in $z$
as long as $1 - 3 u''(z) > 0$ for every $z \in \mathbb{R}$.
Differentiating (\ref{second-order}) for smooth solutions $U$, we obtain
\begin{equation}
\label{derivative-1}
(\gamma - U) U''' = -U'(1 - 3 U'')
\end{equation}
and
\begin{equation}
\label{derivative-2}
(\gamma - U) U'''' = -U''(1-3U'') + 4 U' U'''.
\end{equation}
Also recall from (\ref{identity}) that if $U(z) < \gamma$ and $1 - 3 U''(z) > 0$ for every $z \in \mathbb{R}$,
then $\gamma^3 - 6I > 0$.
Substituting (\ref{identity}), (\ref{derivative-1}), and (\ref{derivative-2}) to equation (\ref{fourth-order}) for $u = U$, we
obtain the identity if and only if $\Gamma$ is given by (\ref{Gamma-defined}).
\end{proof}

\subsection{Second variation of the energy functionals}

The $2\pi$-periodic solution $U$ of the differential equation (\ref{second-order})
is a critical point of either the standard energy functional
$S_{\gamma}(u)$ or the alternative energy functional $R_{\Gamma}(u)$ given by (\ref{first-Lyapunov}) or (\ref{second-Lyapunov})
respectively, see Lemmas \ref{lemma-small-amplitude}, \ref{lemma-critical-points}, and \ref{lemma-higher-order-energy}. In order to
study extremal properties of the energy functionals at $U$, we shall study their second
variation at $U$ in appropriate function spaces.

Using a perturbation $v \in L^2_{\rm per,zero}$ to the periodic wave $U$
and expanding $S_{\gamma}(U+v) - S_{\gamma}(U)$  to the quadratic order in $v$, we
obtain the second variation of $S_{\gamma}$, denoted by $\delta^2 S_{\gamma}$, in the explicit form
\begin{equation}
\label{delta-2-S}
\delta^2 S_{\gamma} = \int \left[ (\partial_z^{-1} v)^2 - (\gamma - U) v^2 \right] dz.
\end{equation}
The domain of the integral depends on the properties of the perturbation $v$.
In what follows, we assume that $v$ is periodic with the period $2 \pi N$ for an integer $N$,
so that $L^2_{\rm per,zero}$ denotes now the space of square integrable $2\pi N$-periodic functions
with the zero mean value.

Since the first term of $\delta^2 S_{\gamma}$ is positive and the second term is negative,
the sign of $\delta^2 S_{\gamma}$ is not generally defined. In order to characterize $\delta^2 S_{\gamma}$,
we write (\ref{delta-2-S}) as the quadratic form
\begin{equation}
\label{operator-L}
\delta^2 S_{\gamma} = \langle L_{\gamma} v, v \rangle_{L^2},
\end{equation}
where
\begin{equation}
\label{operator-L-explicit}
L_{\gamma} := -\partial_z^{-2} - \gamma + U : \; L^2_{\rm per,zero} \to L^2_{\rm per,zero}.
\end{equation}
Note that the range of $L_{\gamma}$ is defined in $L^2_{\rm per, zero}$ by using the projection
operator $P_0$ from the proof of Lemma \ref{lemma-critical-points}.
The following lemma characterizes the spectrum of the self-adjoint operator $L_{\gamma}$
in space $L^2_{\rm per,zero}$ given by (\ref{operator-L-explicit}).

\begin{lemma}
\label{lemma-spectrum-S}
For every integer $N \geq 1$ and every $\gamma > 1$ such that $|\gamma - 1|$ is sufficiently small,
the spectrum of $L_{\gamma}$ in $L^2_{\rm per,zero}$
consists of $2N-1$ positive eigenvalues (counted by their multiplicity),
a simple zero eigenvalue, and an infinite number of negative eigenvalues.
\end{lemma}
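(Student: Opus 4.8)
The plan is to analyze the operator $L_\gamma = -\partial_z^{-2} - \gamma + U$ on $L^2_{\rm per,zero}$ (the $2\pi N$-periodic, zero-mean functions) by a perturbation argument from the limit $a \to 0$, $\gamma \to 1$, where $U \to 0$. In that limit $L_\gamma$ reduces to the multiplier operator $L_0 = -\partial_z^{-2} - 1$ acting diagonally in the Fourier basis $\{e^{ikz}\}$ with $k \in \frac{1}{N}\mathbb{Z} \setminus \{0\}$: the eigenvalue at frequency $k$ is $\mu(k) = k^{-2} - 1$. This vanishes exactly at $k = \pm 1$, is positive for $|k| < 1$ (the finitely many frequencies $k = 1/N, 2/N, \dots, (N-1)/N$ and their negatives, i.e. $2N-2$ of them), and is negative for all $|k| > 1$ (infinitely many). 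So at $a = 0$ the operator $L_0$ already has $2N-2$ positive eigenvalues, a double zero eigenvalue at $k = \pm 1$, and infinitely many negative eigenvalues. The theorem's count ($2N-1$ positive, one simple zero, infinitely many negative) will follow once I show that switching on $a > 0$ splits the double zero into one simple zero and one positive eigenvalue, while leaving all other eigenvalues on their side of the origin.

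First I would set up the framework carefully: $L_\gamma$ is self-adjoint on $L^2_{\rm per,zero}$ with compact resolvent (since $-\partial_z^{-2}$ is compact and bounded below away from $0$ on the zero-mean space, while $-\gamma + U$ is a bounded perturbation — more precisely $L_\gamma^{-1}$-type arguments or a direct Rellich compactness argument on the form domain work), so its spectrum is discrete and consists of real eigenvalues accumulating only at $-\infty$. The eigenvalues depend continuously (indeed analytically, by Kato--Rellich) on $a$ through $U = U(\cdot;a)$ and $\gamma = \gamma(a)$ given by Lemma~\ref{lemma-small-amplitude}. For $a$ small, the $2N-2$ positive eigenvalues of $L_0$ stay positive and the negative ones stay negative; the only issue is the two-dimensional eigenspace of $L_0$ at eigenvalue $0$, spanned by $\cos z$ and $\sin z$.

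The heart of the proof is the $2 \times 2$ perturbation computation on that eigenspace. One eigenvector is forced: differentiating the profile equation (\ref{second-order}) in $z$ shows $L_\gamma U' = 0$ exactly (this is the translational zero mode, and $U'(z) = -a \sin z + \mathcal{O}(a^2)$ lies in the perturbed eigenspace), so $0$ remains an eigenvalue of $L_\gamma$ for all $a$ — I would note it is simple because any second zero mode independent of $U'$ would have to be even, and evenness combined with the structure of (\ref{second-order}) rules it out (alternatively, it follows from the nondegeneracy established by the $2\times 2$ computation below). For the complementary direction, spanned to leading order by $\cos z$, I compute the reduced $2 \times 2$ matrix of $L_\gamma$ restricted (via the Lyapunov--Schmidt/Rayleigh--Schrödinger procedure) to $\mathrm{span}\{\cos z, \sin z\}$: using $U = a\cos z + \frac13 a^2\cos 2z + \mathcal{O}(a^3)$, $\gamma = 1 + \frac16 a^2 + \mathcal{O}(a^4)$, and $-\partial_z^{-2}\cos(nz) = n^{-2}\cos(nz)$, the first-order term $\langle (U - \frac16 a^2)\cos z, \cos z\rangle$ involves only the mean of $a\cos^2 z \cdot(\text{something})$ and one must push to second order. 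The second-order shift of the $\cos z$ eigenvalue picks up a contribution $-\frac16 a^2$ from $\gamma$, a contribution from the $\frac13 a^2 \cos 2z$ term in $U$ (which couples $\cos z$ to $\cos z$ through $\cos 2z \cdot \cos z \sim \frac12\cos z$), and a second-order Rayleigh--Schrödinger term from the coupling $a\cos z$ of $\cos z$ to $\cos 2z$ (eigenvalue $\frac14 - 1 = -\frac34$) and to the constant — but the constant is projected out. I expect these to combine to a strictly positive multiple of $a^2$, so the perturbed eigenvalue leaves $0$ to the right.

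The main obstacle I anticipate is precisely keeping this second-order computation honest: one must use the correct reduced operator (the Schur complement), correctly handle the zero-mean projection $P_0$ in $-\partial_z^{-2} + U$ applied to $U$-products that generate mean values, and verify that the $\mathcal{O}(a^3)$ and higher corrections to $U$ and the $\mathcal{O}(a^4)$ corrections to $\gamma$ genuinely do not affect the sign. A clean way to sidestep part of this is to use the known critical-point structure: $U'$ is an exact kernel element, so I only need the sign of the single remaining reduced eigenvalue, and I can extract it from $\langle L_\gamma w, w\rangle$ for a well-chosen even trial function $w = \cos z + \mathcal{O}(a)$ orthogonal to $U'$, combined with a min-max argument to confirm it is the relevant eigenvalue. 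Once the $2\times 2$ (really $1\times 1$ after removing the forced kernel) computation gives a positive eigenvalue of order $a^2$, continuity/analyticity of the spectrum in $a$ finishes the count: $2N - 2$ old positives plus one new one gives $2N-1$, the zero is simple, and the rest are negative.
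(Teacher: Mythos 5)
Your strategy is the same as the paper's: diagonalize at $a=0$ in Fourier modes $k\in\frac1N\mathbb{Z}\setminus\{0\}$ with eigenvalues $k^{-2}-1$ ($2N-2$ positive, a double zero at $k=\pm1$, the rest negative), keep the nonzero part of the spectrum on its side of the origin by perturbation theory, use the exact translational kernel $L_\gamma U'=0$, and resolve the remaining (even) direction by a Rayleigh--Schr\"odinger/Lyapunov--Schmidt computation at second order in $a$. The problem is that you stop exactly at the step that constitutes the content of the lemma: you list the second-order contributions and then write that you ``expect'' them to combine to a positive multiple of $a^2$. That sign cannot be left to expectation --- for the companion operator $M_\gamma$ (Lemma \ref{lemma-spectrum-R}) the structurally identical computation produces a \emph{negative} coefficient, so nothing short of evaluating the number decides the lemma. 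Completing your own outline: the diagonal second-order terms cancel ($-\frac16 a^2$ from $\gamma=1+\frac16 a^2$ against $+\frac16 a^2$ from the $\frac13 a^2\cos 2z$ piece of $U$, since $\cos 2z\cos z$ projects onto $\frac12\cos z$), and the off-diagonal term through the $\cos 2z$ mode (unperturbed eigenvalue $\frac14-1=-\frac34$, coupling $\langle\cos 2z,\cos^2 z\rangle$) contributes $+\frac13 a^2$; this is precisely the paper's expansion with $\lambda_1=0$, $V_1=\frac23\cos 2z$, and $\lambda_2=\frac13$, giving $\lambda=\frac13 a^2+\mathcal{O}(a^3)>0$. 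Until that is written out (or the equivalent sign of $\langle L_\gamma w,w\rangle$ for an explicit trial function orthogonal to $U'$ is computed), the proof is incomplete.

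A second, more minor defect is the structural claim that $L_\gamma$ has compact resolvent with discrete spectrum accumulating at $-\infty$. This is false: $L_\gamma$ is a \emph{bounded} self-adjoint operator, $-\partial_z^{-2}$ is compact, the unperturbed eigenvalues $k^{-2}-1$ accumulate at $-1$, and for $a\neq 0$ the essential spectrum of $L_\gamma$ is the range of $U-\gamma$, an interval of negative numbers near $-1$. This does not damage the count you need, because the $2N-2$ positive eigenvalues and the double zero eigenvalue are isolated eigenvalues of finite multiplicity, well separated from the negative part of the spectrum; but the correct justification is perturbation of separated parts of the spectrum of a self-adjoint family (as the paper does, citing Theorem VII.1.7 of Kato), not Kato--Rellich analyticity of a discrete spectrum. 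Finally, your parenthetical ``evenness rules out a second zero mode'' is not an argument; simplicity of the kernel is exactly what the $+\frac13 a^2$ computation delivers, as you yourself note in the alternative, so that route should be the one you rely on.
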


\begin{proof}
For $a = 0$, $\gamma = 1$ and $U(z) = 0$, the spectrum of $L_{\gamma = 1}$
consists of a sequence of double eigenvalues
\begin{equation}
\label{spectrum-limiting}
a = 0 : \quad \sigma(L_{\gamma = 1}) = \left\{ -1 + \frac{N^2}{n^2}, \quad n \in \mathbb{N} \right\},
\end{equation}
as follows from Fourier series solutions. If $N = 1$, all double eigenvalues but one are negative
and the only remaining double eigenvalue is zero.

Note that $L_{\gamma = 1} + 1 = -\partial_z^{-2} : L^2_{\rm per,zero} \to L^2_{\rm per,zero}$
is a compact operator. If $a \neq 0$ but small, the nonzero eigenvalues split generally
but remain in the negative domain, as it follows from the perturbation theory for compact operators
under bounded perturbations. In particular, if $A(\epsilon)$ is an analytic family of self-adjoint operators
and if the spectrum of $A(0)$ is separated into two parts, then this remains true also for $A(\epsilon)$ with $\epsilon$
sufficiently small (see Theorem VII.1.7 in \cite{Kato}).

The zero eigenvalue splits generally if $\gamma \neq 1$, but one eigenvalue remains
at zero because of the translational invariance of the differential equation (\ref{second-order})  resulting
in the relation $L_{\gamma} U' = 0$ that holds for every $\gamma \in \left(1, \frac{\pi^2}{9}\right)$. Therefore, we only need to
check what happens with the other eigenvalue bifurcating from zero if $a \neq 0$.
This is done with the regular perturbation expansions.

We expand the operator $L_{\gamma}$ in powers of $a$:
$$
L_{\gamma} = -\partial_z^{-2} - 1 + a \cos(z) + \frac{a^2}{6} (2 \cos(2z) - 1) + \mathcal{O}_{L^2_{\rm per,zero} \to L^2_{\rm per,zero}}(a^3).
$$
Since $U'(z) = -a \sin(z) + \mathcal{O}(a^2)$ persists as the eigenfunction of $L_{\gamma}$ for zero eigenvalue,
we are looking for the perturbation expansion starting with the eigenfunction $V_0 = \cos(z)$:
\begin{equation}
\label{asympt-V-1}
\lambda = a \lambda_1 + a^2 \lambda_2 + \mathcal{O}(a^3), \quad
V = V_0 + a V_1 + a^2 V_2 + \mathcal{O}_{L^2_{\rm per,zero}}(a^3),
\end{equation}
where corrections $V_{1,2}$ are uniquely defined under
the orthogonality constraints $\langle V_0, V_{1,2} \rangle_{L^2} = 0$.
At the linear order in $a$, we obtain
$$
(1 + \partial_{z}^{-2}) V_1 = P_0 \left(\cos^2(z) - \lambda_1 \cos(z)\right) =
\frac{1}{2} \cos(2z) - \lambda_1 \cos(z),
$$
where $P_0$ is the projection operator from $L^2_{\rm per}$ to $L^2_{\rm per,zero}$.
Fredholm's solvability condition and orthogonality for $V_1$ yields the unique solution
\begin{equation}
\label{lambda-1-solution}
\lambda_1 = 0, \quad V_1(z) = \frac{2}{3} \cos(2z).
\end{equation}
At the order of $a^2$, we obtain
$$
(1 + \partial_{z}^{-2}) V_2 = \cos(z) \cos(2z) - \frac{1}{6} \cos(z) - \lambda_2 \cos(z).
$$
Fredholm's solvability condition yields
\begin{equation}
\label{lambda-2-S}
\lambda_2 = \frac{1}{3},
\end{equation}
hence, the zero eigenvalue associated with the even eigenfunction $V_0$ for $a = 0$
becomes positive eigenvalue for nonzero but small values of $a$.
This proves the assertion of the lemma for $N = 1$.

From the explicit representation (\ref{spectrum-limiting}) for $a = 0$, we realize
that there are additional $2(N-1)$ positive eigenvalues of $L_{\gamma = 1}$ if $N \geq 2$ (counting
with their multiplicity). These positive eigenvalues remain positive for nonzero but small values
of $a$. In addition, the perturbation expansions (\ref{asympt-V-1}), (\ref{lambda-1-solution}), and (\ref{lambda-2-S}).
show that an additional positive eigenvalue
of $L_{\gamma}$ bifurcates from zero for any $N \geq 1$. The assertion of the lemma is proved.
\end{proof}

\begin{remark}
The result of Lemma \ref{lemma-spectrum-S} is typical for periodic waves.
For $N = 1$, the positive eigenvalue can be removed under the constraint
that the perturbation $v$ does not change the momentum functional $Q(u) = \| u \|^2_{L^2}$. As a result,
one can use $-S_{\gamma}$ as a Lyapunov functional in the proof of orbital stability
of the periodic waves with respect to the perturbations of the same period \cite{Pava}.
The constraint is used to specify a varying wave speed as a function of time.
In addition, one needs to introduce a varying parameter of spatial translation as a function of time
to remove the degeneracy of the quadratic form at the zero eigenvalue of $L_{\gamma}$.

On the other hand, for $N \geq 2$, the additional positive eigenvalues of $L_{\gamma}$ cannot be removed
by one constraint on the momentum functional $Q$, so that $-S_{\gamma}$ can not
be chosen as the Lyapunov functional in the proof of orbital stability
of the periodic waves with respect to the subharmonic perturbations.
\end{remark}

Let us now inspect the second variation of the alternative
energy functional $R_{\Gamma}(u)$ given by (\ref{second-Lyapunov}).
Using a perturbation $v \in H^2_{\rm per,zero}$ to the periodic wave $U$
and expanding $R_{\Gamma}(U+v) - R_{\Gamma}(U)$ to the quadratic order in $v$,
we obtain
\begin{equation}
\label{delta-2-R}
\delta^2 R_{\Gamma} := \int \left[ \frac{v^2}{(\gamma^3 - 6 I)^{2/3}} - \frac{v_{zz}^2}{(1-3U'')^{5/3}} \right] dz.
\end{equation}
Note that $1 - 3 U''(z) > 0$ for every $z$
and $\gamma^3 - 6I > 0$ if $\gamma \in \left(1, \frac{\pi^2}{9}\right)$.
The first term of $\delta^2 R_{\Gamma}$ is again positive and the second term is negative,
so that the sign of $\delta^2 R_{\Gamma}$ is not generally defined. In order to characterize
$\delta^2 R_{\Gamma}$, we write (\ref{delta-2-R})  as the quadratic form
\begin{equation}
\label{operator-M}
\delta^2 R_{\Gamma} = \langle M_{\gamma} v, v \rangle_{L^2},
\end{equation}
where
\begin{equation}
\label{operator-M-explicit}
M_{\gamma} := -\partial_z^2 (1-3 U'')^{-5/3} \partial_z^2 + (\gamma^3 - 6I)^{-2/3} : H^4_{\rm per,zero} \to L^2_{\rm per,zero}.
\end{equation}
The following lemma characterizes the spectrum of $M_{\gamma}$ given by (\ref{operator-M-explicit}).

\begin{lemma}
\label{lemma-spectrum-R}
For every integer $N \geq 1$ and every $\gamma > 1$ such that $|\gamma - 1|$ is
sufficiently small, the spectrum of $M_{\gamma}$ in $L^2_{\rm per,zero}$
consists of $2N-2$ positive eigenvalues (counted by their multiplicity),
a simple zero eigenvalue, and an infinite number of negative eigenvalues.
\end{lemma}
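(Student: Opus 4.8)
The plan is to follow the route of the proof of Lemma~\ref{lemma-spectrum-S}, now for the fourth-order operator $M_\gamma$. At the unperturbed point $a=0$ one has $\gamma=1$, $U\equiv 0$, $1-3U''\equiv 1$ and $\gamma^3-6I=1$, so $M_{\gamma=1}=-\partial_z^4+1$; the Fourier series of a $2\pi N$-periodic zero-mean function then gives the double eigenvalues $1-(j/N)^4$, $j\in\mathbb{N}$. Exactly $2(N-1)$ of these are positive (those with $1\le j\le N-1$), the eigenvalue at $j=N$ is a double zero with eigenspace $\mathrm{span}\{\cos z,\sin z\}$, and the infinitely many with $j\ge N+1$ are negative. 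Hence for $a=0$ the spectrum already has the stated structure except that zero is double rather than simple.

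Next I would pass to $a\neq 0$ small. Since $1-3U''>0$ for $\gamma\in(1,\pi^2/9)$, the operator $-M_\gamma=\partial_z^2(1-3U'')^{-5/3}\partial_z^2-(\gamma^3-6I)^{-2/3}$ is semibounded from below with compact resolvent, and by Lemma~\ref{lemma-small-amplitude} its coefficients depend analytically on $a$; thus $\{M_\gamma\}$ is a holomorphic family of type (A) on the fixed domain $H^4_{\rm per,zero}$, and the stability theorems for isolated parts of the spectrum apply (Chapter VII of \cite{Kato}). Taking a contour around $0$, the spectral projection onto a fixed disk about $0$ stays of rank $2$ for $|a|$ small (depending on $N$); the $2(N-1)$ positive eigenvalues of $M_{\gamma=1}$ cannot enter this disk and so remain positive, and no negative eigenvalue can reach $0$. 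One member of the two-dimensional kernel persists: since $R_\Gamma$ is translation invariant, differentiating the fourth-order equation~(\ref{fourth-order}) in $z$ gives $M_\gamma U'=0$ for every $\gamma\in(1,\pi^2/9)$, and $U'=-a\sin z+\mathcal{O}(a^2)$ is the odd element of the unperturbed kernel. Because $U$ is even, $M_\gamma$ preserves the even and odd subspaces, and in the even subspace the eigenvalue $0$ of $M_{\gamma=1}$ is simple with eigenfunction $\cos z$, so regular perturbation theory applies to the even branch.

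For that branch I would write $\lambda=a\lambda_1+a^2\lambda_2+\mathcal{O}(a^3)$ and $V=\cos z+aV_1+a^2V_2+\mathcal{O}_{L^2_{\rm per,zero}}(a^3)$ with $\langle\cos z,V_{1,2}\rangle_{L^2}=0$, after expanding $M_\gamma=-\partial_z^4+1+a\mathcal{L}^{(1)}+a^2\mathcal{L}^{(2)}+\mathcal{O}(a^3)$ from the Stokes expansion~(\ref{Stokes}) together with~(\ref{gamma}) and~(\ref{E-parameter}); here $(1-3U'')^{-5/3}=1-5a\cos z+\mathcal{O}(a^2)$ and $(\gamma^3-6I)^{-2/3}=1+\frac53 a^2+\mathcal{O}(a^4)$, so $\mathcal{L}^{(1)}v=5\,\partial_z^2(\cos z\,v_{zz})$. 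Fredholm solvability against $\cos z$ at order $a$ gives $\lambda_1=0$ and $V_1=\frac23\cos 2z$, and at order $a^2$ it gives $\lambda_2<0$ (the computation yields $\lambda_2=-\frac{10}{3}$). Thus the even eigenvalue bifurcating from $\cos z$ becomes negative for small $a\neq 0$ --- the opposite sign to the positive shift found for $L_\gamma$ in Lemma~\ref{lemma-spectrum-S} --- so the double zero of $M_{\gamma=1}$ splits into one simple zero (the $U'$ branch) and one extra negative eigenvalue, while the $2(N-1)$ positive eigenvalues survive; this yields $2N-2$ positive eigenvalues, a simple zero, and infinitely many negative eigenvalues. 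I expect the only mildly delicate point to be controlling the unbounded-below spectrum under the (relatively bounded) perturbation, which is why I would work with $-M_\gamma$ and the type-(A) holomorphic-family machinery rather than the compact-operator argument used for $L_\gamma$; the decisive ingredient is the sign $\lambda_2<0$.
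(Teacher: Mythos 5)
Your proposal is correct and follows essentially the same route as the paper's proof: the unperturbed double eigenvalues $1-n^4/N^4$, persistence of the zero eigenvalue through $M_{\gamma}U'=0$, and the regular perturbation expansion from $V_0=\cos z$ giving $\lambda_1=0$, $V_1=\tfrac23\cos 2z$, and $\lambda_2=-\tfrac{10}{3}<0$, so the even branch of the split double zero goes negative while the remaining $2(N-1)$ positive eigenvalues and the infinitely many negative ones persist. Your additions — the even/odd parity decomposition to legitimize the single-branch expansion, and treating $-M_\gamma$ as a semibounded holomorphic family of type (A) instead of the compact-operator argument the paper borrows from Lemma \ref{lemma-spectrum-S} — are sensible technical refinements of the same argument rather than a different approach.
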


\begin{proof}
For $a = 0$, $\gamma = 1$, $I = 0$, and $U(z) = 0$, the spectrum of $M_{\gamma = 1}$ in $L^2_{\rm per,zero}$
consists of a sequence of double eigenvalues
\begin{equation}
\label{spectrum-limiting-M}
a = 0 : \quad \sigma(M_{\gamma = 1}) = \left\{ 1 - \frac{n^4}{N^4}, \quad n \in \mathbb{N} \right\}.
\end{equation}
If $N = 1$, all double eigenvalues but one are negative
and the only remaining eigenvalue is zero. The zero eigenvalue splits again, but one eigenvalue remains
at zero because of the exact relation $M_{\gamma} U' = 0$ that holds for every $\gamma \in \left(1, \frac{\pi^2}{9}\right)$. Indeed,
using (\ref{identity}), we check that $M_{\gamma} U' = 0$ is equivalent to the differential equation
$$
-\partial_z^2 (\gamma - U)^5 U''' + (\gamma^3 - 6I) U' = 0.
$$
Integrating once with zero mean and using (\ref{derivative-1}), we obtain the differential equation
$$
\partial_z (\gamma - U)^4 (1 - 3 U'') U' + (\gamma^3 - 6I) U = 0,
$$
which is equivalent to the differential equation (\ref{second-order}) due to the identity (\ref{identity}).

Let us now show that the zero eigenvalue of $M_{\gamma = 1}$
associated with the even function $V_0(z) = \cos(z)$ at $a = 0$
becomes negative for nonzero but small values of $a$. This is done by the regular perturbation expansions.
We expand the operator $M_{\gamma}$ in powers of $a$:
$$
M_{\gamma} = -\partial_z^{4} + 1 + 5 a \partial_z^2 \cos(z) \partial_z^2
+ \frac{5a^2}{3} - 10 a^2 \partial_z^4 -
\frac{10 a^2}{3} \partial_z^2 \cos(2z) \partial_z^2 + \mathcal{O}_{H^4_{\rm per,zero} \to L^2_{\rm per,zero}}(a^3).
$$
We are looking for the perturbation expansion starting with the eigenfunction $V_0 = \cos(z)$:
\begin{equation}
\label{asympt-V-1-R}
\lambda = a \lambda_1 + a^2 \lambda_2 + \mathcal{O}(a^3), \quad
V = V_0 + a V_1 + a^2 V_2 + \mathcal{O}_{L^2_{\rm per,zero}}(a^3),
\end{equation}
where corrections $V_{1,2}$ are uniquely defined under
the orthogonality constraints $\langle V_0, V_{1,2} \rangle_{L^2} = 0$.
At the linear order in $a$, we obtain
$$
(1 - \partial_{z}^4) V_1 = - 5 P_0 \left( \partial_z^2 \cos(z) \partial_z^2 \cos(z) \right)+ \lambda_1 \cos(z) =
-10 \cos(2z) + \lambda_1 \cos(z).
$$
Fredholm's solvability condition and orthogonality for $V_1$ yields the unique solution
\begin{equation}
\label{lambda-1-solution-M}
\lambda_1 = 0, \quad V_1(z) = \frac{2}{3} \cos(2z).
\end{equation}
At the order of $a^2$, we obtain
$$
(1 - \partial_{z}^4) V_2 = -\frac{10}{3} \partial_z^2 \cos(z) \partial_z^2 \cos(2z)
+ \frac{10}{3} \partial_z^2 \cos(2z) \partial_z^2 \cos(z) + \frac{25}{3} \cos(z) + \lambda_2 \cos(z).
$$
Fredholm's solvability condition yields
\begin{equation}
\label{lambda-2-R}
\lambda_2 = -\frac{10}{3},
\end{equation}
hence, the assertion of the lemma is proved for $N = 1$. The proof for $N \geq 2$ is similar to
that of Lemma \ref{lemma-spectrum-S}.
\end{proof}

\begin{remark}
Compared to the functional $-S_{\gamma}$, periodic waves are unconstrained minimizers of the functional $-R_{\Gamma}$
with respect to perturbations of the same period. Therefore, $-R_{\Gamma}$ can be used as a Lyapunov
functional in the proof of orbital stability of periodic waves and the only varying parameter
of spatial translation is needed to remove the degeneracy of the quadratic form for
the zero eigenvalue of $M_{\gamma}$, see, e.g., the proof of Theorem 1.8 in \cite{GP1}.
\end{remark}

\subsection{Positivity of a linear combination of the second variations $\delta^2 S_{\gamma}$ and $\delta^2 R_{\Gamma}$}

From Lemmas \ref{lemma-spectrum-S} and \ref{lemma-spectrum-R}, we can see that
properties of the operator $M_{\gamma}$ are similar to those of the operator $L_{\gamma}$ with the exception
of one small eigenvalue, which is positive for $L_{\gamma}$ and negative for $M_{\gamma}$.
Let us define the Lyapunov functional by a linear combination
of the two energy functionals $S_{\gamma}(u)$ and $R_{\Gamma}(u)$ given by (\ref{first-Lyapunov}) and (\ref{second-Lyapunov})
respectively:
\begin{equation}
\label{Lyapunov}
\Lambda_{c,\gamma}(u) := S_{\gamma}(u) - c R_{\Gamma}(u),
\end{equation}
where $c \in \mathbb{R}$ is a parameter to be defined within an appropriate interval.
The energy function $\Lambda_{c,\gamma}(u)$ is defined for $u \in H^s_{\rm per, zero}$ with $s > \frac{5}{2}$.
By Lemmas \ref{lemma-critical-points} and  \ref{lemma-higher-order-energy},
the $2\pi$-periodic smooth solution $U$ of the differential equation (\ref{second-order})
is a critical point of $\Lambda_{c,\gamma}$ for every $c \in \mathbb{R}$.
The second variation of $\Lambda_{c,\gamma}$, denoted by $\delta^2 \Lambda_{c,\gamma}$,
is defined for the perturbation $v$ in $H^2_{\rm per, zero}$, which may have period $2 \pi N$
for an integer $N$.

The following result shows that the second variation $\delta^2 \Lambda_{c,\gamma = 1}$
is positive for a particular value of the parameter $c$.

\begin{lemma}
\label{lemma-positivity}
$\delta^2 \Lambda_{c,\gamma = 1} \geq 0$ for every $v \in H^2_{\rm per, zero}$ and every $N \geq 1$ if and only if $c = \frac{1}{2}$.
\end{lemma}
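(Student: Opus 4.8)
The plan is to exploit that $\gamma = 1$ corresponds, via Lemma~\ref{lemma-small-amplitude} with $a = 0$, to the trivial profile $U \equiv 0$ (and $I = 0$), so that the operators $L_{\gamma}$ and $M_{\gamma}$ from (\ref{operator-L-explicit}) and (\ref{operator-M-explicit}) reduce to constant-coefficient Fourier multipliers and the quadratic form $\delta^2\Lambda_{c,\gamma=1} = \langle (L_{\gamma=1} - c M_{\gamma=1}) v, v \rangle_{L^2}$ diagonalizes exactly. Substituting $U = 0$, $\gamma = 1$, $I = 0$ into (\ref{delta-2-S}) and (\ref{delta-2-R}) gives
\[
\delta^2\Lambda_{c,\gamma=1}(v) = \int \left[ (\partial_z^{-1} v)^2 - (1+c)\, v^2 + c\, v_{zz}^2 \right] dz ,
\]
and for a zero-mean perturbation $v \in H^2_{\rm per,zero}$ of period $2\pi N$ with Fourier coefficients $\{\hat v_n\}_{n \in \Z \setminus \{0\}}$ relative to the modes $e^{i n z/N}$, Parseval's identity turns this into the diagonal sum
\[
\delta^2\Lambda_{c,\gamma=1}(v) = 2\pi N \sum_{n \neq 0} \left( \frac{N^2}{n^2} - (1+c) + c\,\frac{n^4}{N^4} \right) |\hat v_n|^2 ,
\]
consistent with the limiting spectra (\ref{spectrum-limiting}) and (\ref{spectrum-limiting-M}).

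First I would note that $\delta^2\Lambda_{c,\gamma=1}(v) \geq 0$ for every such $v$ and every $N \geq 1$ is equivalent to nonnegativity of every Fourier symbol, i.e. to $\omega(n/N) \geq 0$ for all $n, N \in \N$, where $\omega(k) := k^{-2} - (1+c) + c\, k^4$. Since $\{ n/N : n, N \in \N \} = \mathbb{Q} \cap (0,\infty)$ is dense in $(0,\infty)$ and $\omega$ is continuous there, this is in turn equivalent to $\omega(k) \geq 0$ for all $k > 0$, hence, after multiplying by $k^2 > 0$ and setting $t = k^2$, to
\[
f(t) := c\, t^3 - (1+c)\, t + 1 \geq 0 \qquad \mbox{for all } t > 0 .
\]

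Next I would analyze the cubic $f$. Since $f(1) = 0$ for every $c$, polynomial division gives the factorization $f(t) = (t-1)(c\, t^2 + c\, t - 1)$. The linear factor changes sign at $t = 1$, so $f$ can be nonnegative on all of $(0,\infty)$ only if the quadratic factor vanishes there as well, i.e. $2c - 1 = 0$, forcing $c = \frac{1}{2}$. Conversely, for $c = \frac{1}{2}$ one has $c t^2 + c t - 1 = \frac{1}{2}(t-1)(t+2)$, so $f(t) = \frac{1}{2}(t-1)^2(t+2) \geq 0$ for all $t > 0$, with equality only at $t = 1$; the latter corresponds to the frequency $k = 1$, which is the limit of the translational neutral mode with $L_{\gamma} U' = M_{\gamma} U' = 0$ appearing in Lemmas~\ref{lemma-spectrum-S} and \ref{lemma-spectrum-R}. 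For $c \neq \frac{1}{2}$, $f$ is strictly negative on a one-sided neighborhood of $t = 1$, and choosing a rational square $t_0 = (n/N)^2$ in that neighborhood exhibits a Fourier mode on which $\delta^2\Lambda_{c,\gamma=1} < 0$, which yields the "only if" direction.

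The computation is elementary throughout, so there is no serious obstacle; the one step deserving care is the reduction "positivity for every $N \geq 1$" $\Leftrightarrow$ "$\omega \geq 0$ on all of $(0,\infty)$", which uses density of the admissible frequencies together with continuity of the symbol, and dually the observation that the strict sign change of $f$ at $t = 1$ for $c \neq \frac{1}{2}$ is always detected at an admissible rational frequency.
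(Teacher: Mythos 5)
Your proof is correct, and it follows the same overall strategy as the paper: at $a=0$ the profile is trivial, the quadratic form $\delta^2\Lambda_{c,\gamma=1}$ diagonalizes in Fourier variables, and everything reduces to nonnegativity of the symbol $D_c(k)=ck^4-(1+c)+k^{-2}$. Where you differ is in the two supporting steps. The paper passes directly to the Fourier transform on the line $H^2(\R)\cap\dot{H}^{-1}(\R)$ and settles the scalar question by calculus: it locates the minimizer $k_c^6=(2c)^{-1}$ for $c>0$, studies $F(c)=D_c(k_c)=\frac{3}{2^{2/3}}c^{1/3}-1-c$, shows $F\leq 0$ with equality exactly at $c=\frac12$, and then records the factorization $D_{1/2}(k)=(1-k^2)^2(2+k^2)/(2k^2)$. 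You instead keep the discrete admissible frequencies $n/N$, reduce to $\omega\geq 0$ on $(0,\infty)$ by density and continuity, and settle the inequality algebraically via $f(t)=ct^3-(1+c)t+1=(t-1)(ct^2+ct-1)$: unless the quadratic factor also vanishes at $t=1$ (i.e.\ $c=\frac12$), $f$ changes sign there, while for $c=\frac12$ one has $f(t)=\frac12(t-1)^2(t+2)\geq 0$. Your route is more elementary (no two-stage optimization in $k$ and $c$, and the cases $c\leq 0$ need no separate treatment), and it is slightly more careful on the ``only if'' direction: a negative symbol value at a possibly irrational minimizer must be transferred to an admissible rational frequency $n/N$, a density step you make explicit and the paper leaves implicit in its passage to the whole-line problem. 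Both proofs ultimately rest on the same factorization at $k^2=1$, which reflects the translational zero mode.
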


\begin{proof}
We substitute $a = 0$, $\gamma = 1$, $I = 0$, and $U(z) = 0$ in (\ref{delta-2-S}) and (\ref{delta-2-R})
and obtain
\begin{equation}
\label{Lyapunov-zero}
a = 0 : \quad \delta^2 \Lambda_{c,\gamma=1} = \int \left[ (\partial_z^{-1} v)^2 - (1+c) v^2 + c (\partial_z^2 v)^2 \right] dz.
\end{equation}
The second variation $\delta^2 \Lambda_{c,\gamma=1}$ is well-defined for $v \in H^2_{\rm per, zero}$, e.g.,
in the space of $2\pi N$-periodic functions with zero mean for an integer $N$. To deal with every integer $N$,
we are looking for the values of $c$, for which $\delta^2 \Lambda_{c,\gamma=1} \geq 0$
for every $v \in H^2(\mathbb{R}) \cap \dot{H}^{-1}(\mathbb{R})$.

By the Fourier transform on the line, the problem of determining if $\delta^2 \Lambda_{c,\gamma=1} \geq 0$ in (\ref{Lyapunov-zero})
is identical to the problem of finding $c$ for which
$$
D_c(k) := ck^4 - (1+c) + k^{-2} \geq 0, \quad k \in \mathbb{R}.
$$
The values $c = 0$ and $c < 0$ do not provide positivity of $D_c(k)$ as $|k| \to \infty$. Therefore, we consider $c > 0$ only.

Since $D_c'(k) = 4 c k^3 - 2 k^{-3}$, the critical points of $D_c$ exist if $c > 0$ and correspond to $k_c^6 = (2c)^{-1}$.
Since $D_c''(k) = 12 c k^2 + 6 k^{-4} > 0$, the critical points
of $D_c$ at $k = \pm k_c$ are the points of minimum of $D_c$. Therefore, we can compute
$$
F(c) := D_c(k_c) = c k_c^4 - (1+c) + k_c^{-2} =  \frac{3}{2^{2/3}} c^{1/3} -1 - c.
$$
We are looking for the values of $c$ for which $F(c) \geq 0$, so that $D_c(k) \geq 0$ for all $k \in \mathbb{R}$.
Since $F'(c) = (2c)^{-2/3} - 1$, the only critical point of $F$ occurs at $c_0 = \frac{1}{2}$.
Since $F''(c) < 0$, $c_0$ is the point of maximum of $F$,
for which we have $F(c_0) = 0$. Therefore, $F(c) \leq 0$ and the only possible value of $c$
for which $D_c(k) \geq 0$ for every $k \in \mathbb{R}$ is the value $c = c_0 = \frac{1}{2}$. Indeed,
in this case, we can factorize the dispersion relation in the form
\begin{equation}
\label{factorization}
D_{c = c_0}(k) = \frac{1}{2} k^4 - \frac{3}{2} + \frac{1}{k^2} = \frac{(1-k^2)^2 (2 + k^2)}{2 k^2} \geq 0.
\end{equation}
The statement of the lemma is proved.
\end{proof}

By a perturbation argument, we shall verify that $\delta^2 \Lambda_{c,\gamma}$
remains nonnegative for every $\gamma$ such that $|\gamma - 1|$ is sufficiently small,
provided the parameter $c$ is close enough to $c_0$. Although
$\delta^2 \Lambda_{c,\gamma}$ still vanishes for $v(z) = U'(z)$ for every $\gamma \in \left(1,\frac{\pi^2}{9}\right)$
due to the translational symmetry, we will show that the zero eigenvalue is simple
and $\delta^2 \Lambda_{c,\gamma} > 0$ if $v \in H^2_{\rm per, zero}$ is nonzero and orthogonal to $U'$, 
provided that $|\gamma - 1|$ and $|c-c_0|$ are sufficiently small.
The following theorem represents the main result of this paper
for the reduced Ostrovsky equation (\ref{redOst}). 

Since $1 - 3 U''(z) > 0$ for every $z \in \mathbb{R}$ if $\gamma \in \left(1,\frac{\pi^2}{9}\right)$,
a global solution to the reduced Ostrovsky equation (\ref{redOst}) exists for sufficiently small
$v \in H^s_{\rm per,zero}$ with $s > \frac{5}{2}$ \cite{GP,LPS1}. The value of $\Lambda_{c,\gamma}$
conserves in the time evolution of the reduced Ostrovsky equation (\ref{redOst}). 
Orbital stability of the periodic wave with profile $U$ with respect to subharmonic perturbations
$v \in H^s_{\rm per, zero}$ with $s > \frac{5}{2}$  follows from
the positivity of $\delta^2 \Lambda_{c,\gamma}$ \cite{GP1}. The choice of $H^s_{\rm per,zero}$ with $s > \frac{5}{2}$
is explained by the necessity to control $v_{zz} \in L^{\infty}_{\rm per}$ by Sobolev's embedding.

\begin{theorem}
\label{theorem-red-Ost}
There exists $\gamma_0 \in \left(1,\frac{\pi^2}{9}\right)$ and $C > 0$ such that
$\delta^2 \Lambda_{c,\gamma} \geq C \| v \|_{H^2_{\rm per}}^2$ for every $\gamma \in (1,\gamma_0)$ and
every $v \in H^2_{\rm per, zero}$ such that $\langle U', v \rangle_{L^2_{\rm per}} = 0$,
if $c \in (c_-,c_+)$, where $c_{\pm}$ are given by the asymptotic expansion
\begin{equation}
\label{exact-c-plus-minus}
c_{\pm} = \frac{1}{2} \pm \frac{\sqrt{3} a}{2} + \mathcal{O}(a^2), \quad \mbox{\rm as} \quad a \to 0,
\end{equation}
where $a$ determines $\gamma$ and $I$ in the Stokes expansions (\ref{gamma}) and (\ref{E-parameter}).
\end{theorem}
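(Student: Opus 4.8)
The plan is to treat $\delta^2 \Lambda_{c,\gamma}$ as an analytic family of quadratic forms (equivalently, of self-adjoint operators) in the two small parameters $a$ and $c-c_0$, with $\gamma$ and $I$ slaved to $a$ via the Stokes expansions (\ref{gamma}) and (\ref{E-parameter}), and to perturb off the explicitly factorized limiting form (\ref{factorization}) from Lemma \ref{lemma-positivity}. First I would fix an integer $N \geq 1$ and work in the space of $2\pi N$-periodic zero-mean functions. The associated operator is $\mathcal{L}_{c,\gamma} := L_\gamma - c M_\gamma$ acting on $H^4_{\rm per, zero}$ (the natural form domain being $H^2_{\rm per, zero}$), whose quadratic form is $\delta^2\Lambda_{c,\gamma}$. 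At $a=0$, $\gamma=1$, $c=c_0=\tfrac12$ this operator has purely discrete spectrum given by $D_{c_0}(n/N) = \frac{(1-n^2/N^2)^2(2+n^2/N^2)}{2 n^2/N^2} \geq 0$, with a zero eigenvalue precisely at $n = N$ (the Fourier mode $\cos(z), \sin(z)$ in the $z$ variable), which is therefore a \emph{double} zero eigenvalue at $a=0$; all other eigenvalues are strictly positive and bounded away from $0$.

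The heart of the argument is to track this double zero eigenvalue as $a$ and $c-c_0$ turn on. From Lemmas \ref{lemma-spectrum-S} and \ref{lemma-spectrum-R} we already know that, at $c=c_0$ and $a\neq 0$ small, one of the two eigenvalues bifurcating from zero stays at zero for all $\gamma\in(1,\tfrac{\pi^2}{9})$ (this is the translational zero mode $U'$, with $L_\gamma U' = M_\gamma U' = 0$), so the only thing to control is the \emph{other} eigenvalue in this two-dimensional bifurcating subspace. I would set up Lyapunov--Schmidt / Rayleigh--Schr\"odinger perturbation theory on the spectral subspace spanned (at $a=0$) by $\cos z$ and $\sin z$, using the operator expansions already recorded in the proofs of Lemmas \ref{lemma-spectrum-S} and \ref{lemma-spectrum-R} together with $\mathcal{L}_{c,\gamma} = L_\gamma - c_0 M_\gamma - (c-c_0) M_\gamma$. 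By reversibility ($U$ even), the subspace decouples into the even sector ($\cos z$-branch) and the odd sector ($\sin z$-branch); the odd branch is pinned at $0$ by translation, so I only need the even branch. Writing $\mu = \mu(a, c)$ for that eigenvalue, the already-computed coefficients from (\ref{lambda-2-S}) and (\ref{lambda-2-R}) give $\mu(a,c) = (\lambda_2^{S} - c_0\,\lambda_2^{R}) a^2 + \cdots = (\tfrac13 + \tfrac12\cdot\tfrac{10}{3}) a^2 + \cdots$ at $c=c_0$ — but wait, that is manifestly positive, so in fact the combination is already positive at $c=c_0$ at leading order; the term linear in $c-c_0$ comes from $-(c-c_0)\langle M_{\gamma=1} V_0, V_0\rangle$-type corrections. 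The key computation is to evaluate $\partial_c \mu$ and the $O(a^2)$, $O(a(c-c_0))$, $O((c-c_0)^2)$ coefficients of the reduced $1\times 1$ (even-sector) eigenvalue, and to show that the resulting quadratic (in $c$, with $a$-dependent coefficients) is nonnegative exactly on an interval $[c_-, c_+]$ whose endpoints have the claimed expansion $c_\pm = \tfrac12 \pm \tfrac{\sqrt3}{2} a + O(a^2)$. Since the endpoints differ from $c_0$ at order $a$, the quadratic in $c$ must be degenerating: its leading behavior near $c=c_0$, $a=0$ should look like $\mu \sim \alpha a^2 - \beta(c-c_0)^2 + \cdots$ with $\beta/\alpha = 1/3$, giving roots $c - c_0 = \pm\sqrt{\alpha/\beta}\, a = \pm\sqrt3\, a$ — consistent with (\ref{exact-c-plus-minus}) after the factor of $\tfrac12$. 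I would verify $\alpha > 0$, $\beta > 0$ and compute $\alpha/\beta = 3$ from the explicit second-order perturbation data.

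Once the small eigenvalue is shown to satisfy $\mu(a,c) \geq 0$ for $c\in[c_-,c_+]$ (with equality only at the endpoints), I would assemble the full statement: (i) by Theorem VII.1.7 in \cite{Kato} the strictly positive part of $\sigma(\mathcal{L}_{c_0,1})$ stays uniformly bounded below by some $\delta > 0$ for $|a|, |c-c_0|$ small, uniformly in $N$ (here one uses that the Fourier multiplier bound $D_c(k) \geq \delta(1+k^4)^{-1}\cdot(\text{something})$ — more carefully, one shows $\mathcal{L}_{c,\gamma} \geq \epsilon(-\partial_z^4 + 1)$ on the complement of the bifurcating mode, which gives coercivity in $H^2$); (ii) the remaining bifurcating even eigenvalue is $\mu \geq 0$; (iii) the odd bifurcating eigenvalue is identically $0$ with eigenfunction $U'$, which is removed by the orthogonality constraint $\langle U', v\rangle = 0$. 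Combining, $\delta^2\Lambda_{c,\gamma}(v) \geq C\|v\|_{H^2_{\rm per}}^2$ on $\{v \in H^2_{\rm per, zero} : \langle U', v\rangle = 0\}$, with $C$ independent of $N$. I expect the main obstacle to be the uniformity in $N$: the estimates must hold simultaneously for all subharmonic periods, which forces one to work with the Fourier-multiplier picture on $\mathbb{R}$ (as in the proof of Lemma \ref{lemma-positivity}) rather than with the discrete spectrum for each fixed $N$, and to argue that the $a$-dependent perturbation — which is a genuine (nonmultiplier) operator because $U$ depends on $z$ — does not destroy the spectral gap uniformly; controlling the off-diagonal coupling between the bifurcating mode near $|k|\approx 1$ and the rest of the spectrum, uniformly in $N$, is the delicate point. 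A secondary subtlety is checking that $\delta^2 R_\Gamma$ in (\ref{delta-2-R}), hence $M_\gamma$, is genuinely well-defined and analytic in $a$ on $H^2_{\rm per,zero}$, which uses $1 - 3U'' > 0$ and the smoothness of $U$ in both $z$ and $a$ from Lemma \ref{lemma-small-amplitude}.
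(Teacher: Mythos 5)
There is a genuine gap: the mechanism you propose for locating $c_\pm$ is not the one that actually determines the interval. You reduce to the two-dimensional bifurcating subspace at $\kappa=0$ (perturbations of the same period as the wave), discard the odd (translational) branch, and claim the even-sector eigenvalue behaves like $\alpha a^2-\beta(c-c_0)^2$ with roots at $c_\pm$. But that eigenvalue is exactly the quantity computed from (\ref{lambda-2-S}) and (\ref{lambda-2-R}): for $K_{c,\gamma}=L_\gamma-cM_\gamma$ it equals $\lambda(a,c)=\tfrac{1+10c}{3}\,a^2+\mathcal{O}(a^3)$, which is positive and of size $a^2$ uniformly for $c$ near $c_0=\tfrac12$ (it would vanish only near $c=-\tfrac1{10}$). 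There is no $(c-c_0)^2$ term at the scale needed, so your reduced $1\times1$ problem yields an admissible $c$-interval of width $\mathcal{O}(1)$, not the claimed $\mathcal{O}(a)$ interval; indeed your own attempted match gives $c-c_0=\pm\sqrt3\,a$ rather than $\pm\tfrac{\sqrt3}{2}a$, a sign that the numbers in (\ref{exact-c-plus-minus}) cannot be produced by the $\kappa=0$ calculation. The loss of positivity for $|c-c_0|>\tfrac{\sqrt3}{2}|a|$ occurs at small \emph{nonzero} Floquet parameter $\kappa$, i.e.\ for long subharmonic perturbations (large $N$), not for perturbations of the basic period.

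The paper's proof (Proposition \ref{proposition-main} plus the verification in Section 4.2) captures precisely this: one expands the lowest Floquet--Bloch band in $\kappa$ about the translational zero mode $W_a=-a^{-1}U'$, and the first-order correction $W_1=-[P_{c,\gamma}(0)]^{-1}P_{c,\gamma}'(0)W_a$ is \emph{large}, of size $\mu_1|c-c_0|/(\lambda_2(c_0)a^2)$, because $P_{c,\gamma}'(0)W_a$ has a component $-i\mu_1(c-c_0)\cos z$ (Assumption \ref{assumption-3}) along the eigenfunction of the nearby $\mathcal{O}(a^2)$ eigenvalue. Feeding this back at second order in $\kappa$ produces the band curvature $\tfrac12\lambda''(0)-\mu_1^2(c-c_0)^2/(\lambda_2(c_0)a^2)+\mathcal{O}(a)$ in (\ref{band-perturbation-lowest}), and the threshold $c_\pm$ is where this curvature changes sign (with $\lambda''(0)=12$, $\mu_1=4$, $\lambda_2(c_0)=2$, giving $|c-\tfrac12|<\tfrac{\sqrt3}{2}|a|$). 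So the ``uniformity in $N$'' issue you flag at the end is not a technical patch to be added after your $\kappa=0$ computation: it is the source of the boundary, and it requires the whole-line Bloch analysis, the non-degeneracy coefficient $\mu_1$ from (\ref{non-degeneracy}), and the careful treatment of the singular regime $|c-c_0|\le C|a|$, $|\kappa|\le Ca^2$ carried out in the proof of Proposition \ref{proposition-main}. As written, your argument would (incorrectly) conclude positivity for a much larger range of $c$ and would not recover (\ref{exact-c-plus-minus}).
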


\begin{remark}
Using numerical computations, we will show in Section 5 that the result of Theorem \ref{theorem-red-Ost}
extends to every $\gamma$ in the interval $\left(1,\frac{\pi^2}{9}\right)$. The boundaries $c_{\pm}$
will be approximated numerically. It is quite possible that analytical expressions for $c_{\pm}$
exist, but obtaining analytical expressions requires a heavy use of integrability
features of the reduced Ostrovsky equation (\ref{redOst}).
\end{remark}

\section{Main result for the modified reduced Ostrovsky equation (\ref{redModOst})}

Here we obtain similar results for the modified reduced Ostrovsky equation (\ref{redModOst}).
For the sake of brevity, we only outline the results and skip proofs which follow those in
Section 2.

\subsection{Travelling waves}

Travelling $2L$-periodic solutions of the modified reduced Ostrovsky equation (\ref{redModOst})
can be expressed in the normalized form
\begin{equation}
\label{trav-wave-mod}
u(x,t) = \frac{L}{\pi} U(z), \quad z = \frac{\pi}{L} x - \frac{L}{\pi} \gamma t,
\end{equation}
where $U(z)$ is a $2\pi$-periodic, zero-mean solution of the second-order differential equation
\begin{equation}
\label{second-order-mod}
\frac{d}{dz} \left[ \left( \gamma - \frac{1}{2} U^2 \right) \frac{dU}{dz} \right] + U(z) = 0,
\end{equation}
and the parameter $\gamma$ is proportional to the wave speed.
The second-order equation (\ref{second-order-mod}) can be integrated with the first-order invariant
\begin{equation}
\label{first-order-mod}
I = \frac{1}{2} \left( \gamma - \frac{1}{2} U^2 \right)^2
\left( \frac{dU}{dz} \right)^2 + \frac{\gamma}{2} U^2 - \frac{1}{8} U^4 = {\rm const}.
\end{equation}
By the translational invariance and reversibility of the differential equation (\ref{second-order-mod}),
$U$ is selected to be an even function of $z$. The following lemma reports an analogue of Lemma \ref{lemma-small-amplitude}.
The proof is similar, so it will be omitted.

\begin{lemma}
\label{lemma-small-amplitude-mod}
For every $\gamma > 1$ such that $|\gamma - 1|$ is sufficiently small,
there exists a unique $2\pi$-periodic, even, smooth solution $U$,
which can be parameterized by the amplitude parameter $a$, such that
\begin{equation}
\label{Stokes-mod}
U(z) = a \cos(z) + \frac{3}{64} a^3 \cos(3z) + \mathcal{O}_{C^{\infty}_{\rm per}}(a^5)
\quad \mbox{\rm as} \quad a \to 0,
\end{equation}
where parameter $a$ determines the asymptotic expansions
\begin{equation}
\label{gamma-parameter-mod}
\gamma = 1 + \frac{1}{8} a^2 + \mathcal{O}(a^4)
\end{equation}
and
\begin{equation}
\label{E-parameter-mod}
I = \frac{1}{2} a^2 + \mathcal{O}(a^4) \quad \mbox{\rm as} \quad a \to 0.
\end{equation}
\end{lemma}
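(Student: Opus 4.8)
The plan is to prove the lemma by a Lyapunov--Schmidt reduction at the resonant bifurcation point $(U,\gamma)=(0,1)$, following the scheme used for Lemma~\ref{lemma-small-amplitude} and for Proposition~2.1 in \cite{GP1}; the one structural feature I would exploit is that the nonlinearity in (\ref{second-order-mod}) is cubic, hence odd in $U$, rather than quadratic. First I would rewrite the travelling-wave equation (\ref{second-order-mod}) in the expanded form
$$
F(U,\gamma) := L_0 U + (\gamma-1) U'' - \tfrac12 U^2 U'' - U (U')^2 = 0, \qquad L_0 := \partial_z^2 + 1,
$$
and regard $F$ as a smooth map on the space of even, $2\pi$-periodic, zero-mean functions (reversibility and translation invariance justify the restriction to even profiles, and integrating (\ref{second-order-mod}) over one period shows that every solution automatically has zero mean). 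Since $F(0,1)=0$ and the Fr\'echet derivative in $U$ at this point is $L_0$, with $L_0\cos(nz)=(1-n^2)\cos(nz)$, the kernel of $L_0$ on the even zero-mean subspace is one-dimensional and spanned by $\cos z$ (the companion mode $\sin z$ being excluded by evenness), while $L_0$ is boundedly invertible on the $L^2$-orthogonal complement of $\cos z$.

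Next I would carry out the reduction with the amplitude $a$, defined as the $\cos z$ Fourier coefficient of $U$, as the bifurcation parameter. Setting $U=a\cos z+w$ with $w$ even, zero-mean and orthogonal to $\cos z$, the projection of $F=0$ onto the range of $L_0$ determines $w=w(a,\gamma)$ uniquely by the implicit function theorem, and the scalar projection onto $\cos z$ fixes $\gamma=\gamma(a)$. The resulting solution is smooth in $z$ and $a$ because the leading coefficient $\gamma-\tfrac12 U^2$ stays bounded away from zero for small $a$. Here I would use the equivariance $F(-U,\gamma)=-F(U,\gamma)$ to conclude that $w$ is odd in $a$, so $U$ contains only odd harmonics, while $\gamma(a)$ and the invariant $I$ are even in $a$; this is precisely what produces the absence of a $\cos 2z$ term and the $\mathcal{O}_{C^{\infty}_{\rm per}}(a^5)$ and $\mathcal{O}(a^4)$ remainders in (\ref{Stokes-mod})--(\ref{E-parameter-mod}), in contrast to the quadratic case of Lemma~\ref{lemma-small-amplitude}.

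To pin down the coefficients I would substitute $U=a\cos z+a^3 c_3\cos 3z+\mathcal{O}(a^5)$ and $\gamma=1+a^2\gamma_2+\mathcal{O}(a^4)$ and match harmonics at order $a^3$: the Fredholm solvability condition on the $\cos z$ harmonic reduces to $-\gamma_2+\tfrac18=0$, hence $\gamma_2=\tfrac18$, and the $\cos 3z$ harmonic then gives $-8c_3+\tfrac38=0$, hence $c_3=\tfrac{3}{64}$, reproducing (\ref{Stokes-mod}) and (\ref{gamma-parameter-mod}). For the invariant I would evaluate the conserved quantity (\ref{first-order-mod}) on the leading profile, where the kinetic and leading potential terms combine as $\tfrac{a^2}{2}(\sin^2 z+\cos^2 z)=\tfrac{a^2}{2}$ up to $\mathcal{O}(a^4)$, giving (\ref{E-parameter-mod}).

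The main obstacle is the degeneracy of the linearization, the resonance $L_0\cos z=0$ on the space of $2\pi$-periodic functions; this is exactly what the Lyapunov--Schmidt reduction together with the evenness restriction is designed to resolve, and once the non-degeneracy $\gamma_2=\tfrac18\neq0$ of the reduced bifurcation equation is verified, existence, uniqueness and smoothness are routine. The map $a\mapsto\gamma(a)=1+\tfrac18 a^2+\mathcal{O}(a^4)$ is then locally invertible for $a\ge0$, so every $\gamma>1$ with $|\gamma-1|$ small corresponds to a unique amplitude, and hence to a unique even profile $U$, completing the parameterization.
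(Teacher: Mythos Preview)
Your proposal is correct and follows essentially the same approach as the paper: the paper omits the proof entirely, stating only that it is ``similar'' to that of Lemma~\ref{lemma-small-amplitude}, which in turn invokes the standard Lyapunov--Schmidt reduction of \cite{GP1} and leaves the coefficient computations as ``straightforward and hence omitted.'' You have supplied the missing details, and your explicit use of the odd equivariance $F(-U,\gamma)=-F(U,\gamma)$ to explain the $\mathcal{O}_{C^\infty_{\rm per}}(a^5)$ and $\mathcal{O}(a^4)$ remainders is a worthwhile addition that the paper leaves implicit; one small phrasing quibble is that oddness in $a$ directly gives only odd \emph{powers} of $a$ in $U$, while the absence of even \emph{harmonics} follows from the closely related symmetry $U(z)\mapsto -U(z+\pi)$, but either route yields the stated expansion.
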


\begin{remark}
The transformation
\begin{equation}
\label{wave-transformation-mod}
U(z) = {\bf u}(\zeta), \quad z = \int_0^{\zeta} \left( \gamma - \frac{1}{2} {\bf u}(\zeta')^2 \right) d \zeta'
\end{equation}
reduces(\ref{second-order-mod}) to the second-order equation 
\begin{equation}
\label{KdV-ODE-mod}
\frac{d^2 {\bf u}}{d \zeta^2} + \left( \gamma - \frac{1}{2} {\bf u}^2 \right) {\bf u} = 0.
\end{equation}
The latter equation arises for travelling waves of the modified KdV equation and it admits
explicit solutions for periodic waves given by the Jacobi elliptic {\rm sn} functions.
The family of $2\pi$-periodic waves exists in variables $U$ and $z$
as long as the transformation (\ref{wave-transformation-mod})
is invertible, that is, as long as $|{\bf u}(\zeta)| < \sqrt{2\gamma}$ for every $\zeta$.
\end{remark}

\begin{remark}
The periodic wave with profile $U$ exists for every $\gamma \in \left(1,\frac{\pi^2}{8}\right)$ \cite{JG}.
As $\gamma \to \frac{\pi^2}{8}$,
the periodic wave is no longer smooth at the end points of the fundamental period.
The limiting wave has the piecewise linear profile:
\begin{equation}
\label{linear-wave}
\gamma = \frac{\pi^2}{8} : \quad U(z) = |z| - \frac{\pi}{2},
\end{equation}
such that $U(\pm \pi) = \sqrt{2 \gamma}$. As $\gamma \to \frac{\pi^2}{8}$,
the piecewise linear wave profile (\ref{linear-wave}) yields the value
\begin{equation}
\label{first-order-final-mod}
I = \frac{1}{2} \gamma^2 = \frac{\pi^4}{128}.
\end{equation}
From the first-order invariant (\ref{first-order-mod}), we deduce that
\begin{equation}
\label{identity-mod}
1 - \left( \frac{dU}{dz} \right)^2 = \frac{\gamma^2 - 2 I}{(\gamma - \frac{1}{2}U^2)^2}.
\end{equation}
Since $|U'(z)| < 1$ and $|U(z)| < \sqrt{2 \gamma}$ for every $z \in \mathbb{R}$
if $\gamma \in \left(1,\frac{\pi^2}{8}\right)$, we have
$\gamma^2 - 2 I > 0$ for every $\gamma \in \left(1,\frac{\pi^2}{8}\right)$ \cite{JG}.
\end{remark}

\subsection{Conserved quantities}

Local solutions $u$ of the modified reduced Ostrovsky equation (\ref{redModOst})
in the space $H^s \cap \dot{H}^{-1}$ with $s > \frac{3}{2}$ \cite{LPS2}
have conserved momentum $Q(u) = \| u \|_{L^2}^2$ and energy
\begin{equation}
\label{energy_mod}
E(u) = \| \partial_x^{-1} u \|_{L^2}^2 + \frac{1}{12} \| u \|_{L^4}^4,
\end{equation}
so that the first energy functional $S_{\gamma}(u)$ can be considered in the same form (\ref{first-Lyapunov}).
The periodic wave profile $U$ satisfying the differential equation (\ref{second-order-mod})
is a critical point of the energy functional $S_{\gamma}(u)$. The proof is similar to the one in Lemma \ref{lemma-critical-points}.

The other two conserved quantities of the modified reduced Ostrovsky equation (\ref{redModOst}) \cite{Br,JG,PelSak}
are defined if $u \in H^2$ with $1 - u_x^2 \geq F_0 >  0$ for every $x \in \mathbb{R}$ and some constant $F_0$,
The conserved higher-order energy is given by
\begin{equation}
\label{higher-order-energy-2-mod}
H(u) = \int_{\mathbb{R}}\frac{u_{xx}^{2}}{(1-u_{x}^{2})^{5/2}}dx
\end{equation}
where the conserved Casimir-type functional is given by
\begin{equation}
\label{higher-order-energy-1-mod}
C(u) = \int (1 - u_{x}^2)^{1/2} dx.
\end{equation}
The conserved quantity $C(u)$ is a mass integral associated to the
quantity $f := (1 - u_x^2)^{1/2}$, which determines
the Jacobian of the coordinate transformation that brings the reduced
modified Ostrovsky equation to the integrable Klein--Gordon-type equation \cite{Br,JG}.

Let us define the second energy functional $R_{\Gamma}(u)$ by
the same expression (\ref{second-Lyapunov}). The functional $R_{\Gamma}(u)$
is well defined in function space $H^s_{\rm per}$ with $s > \frac{3}{2}$ (so that $u_{z}$
is a bounded and continuous function by Sobolev's embedding).
The following lemma gives an analogue of Lemma \ref{lemma-higher-order-energy}.

\begin{lemma}
\label{lemma-higher-order-energy-mod}
The set of $2\pi$-periodic smooth solutions $U$
of the differential equation (\ref{second-order-mod})
such that $|U'(z)| < 1$ and $|U(z)| < \sqrt{2 \gamma}$ for every $z \in \mathbb{R}$
yields critical points of the energy functional $R_{\Gamma}(u)$ given by (\ref{second-Lyapunov})
in the space $H^s_{\rm per}$ with $s > \frac{3}{2}$ if and only if
\begin{equation}
\label{Gamma-defined-mod}
\Gamma := \frac{-1}{2 (\gamma^2 - 2I)^{1/2}},
\end{equation}
where $\gamma^2 - 2I > 0$.
\end{lemma}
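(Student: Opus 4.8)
The plan is to follow the same route as in the proof of Lemma~\ref{lemma-higher-order-energy}: derive the Euler--Lagrange equation associated with $R_{\Gamma}$, and then substitute the differential consequences of (\ref{second-order-mod}) together with the first-order invariant identity (\ref{identity-mod}) to see that the periodic wave $U$ solves that equation precisely when $\Gamma$ takes the value (\ref{Gamma-defined-mod}).

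First I would compute the first variation of $R_{\Gamma}$. Writing all integrals in the normalized variable $z$ and integrating by parts using periodicity, for a perturbation $v \in H^s_{\rm per}$ with $s > \frac{3}{2}$ one gets
$$\delta R_{\Gamma} = \int \left[ \frac{d}{dz}\!\left( \frac{u'}{(1-u'^2)^{1/2}} \right) - 2\Gamma u \right] v \, dz,$$
so that the Euler--Lagrange equation is the second-order equation
$$\frac{u''}{(1-u'^2)^{3/2}} - 2\Gamma u = 0, \qquad \text{equivalently} \qquad u'' - 2\Gamma u\,(1-u'^2)^{3/2} = 0,$$
where I used the elementary identity $\frac{d}{dz}\frac{u'}{(1-u'^2)^{1/2}} = \frac{u''}{(1-u'^2)^{3/2}}$. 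Solutions of this equation are smooth in $z$ as long as $|u'(z)| < 1$, which is consistent with the hypotheses of the lemma and with the choice $s > \frac{3}{2}$ of the function space (so that $u'$ is bounded and continuous by Sobolev's embedding).

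Next, from (\ref{second-order-mod}), expanding the derivative gives
$$\left( \gamma - \frac{1}{2} U^2 \right) U'' = U (U')^2 - U = -U\bigl(1-(U')^2\bigr),$$
while from (\ref{identity-mod}), using that $\gamma - \frac{1}{2} U^2 > 0$ because $|U| < \sqrt{2\gamma}$ and that $\gamma^2 - 2I > 0$ for $\gamma \in \left(1, \frac{\pi^2}{8}\right)$ (as recorded after (\ref{identity-mod})), one obtains
$$\gamma - \frac{1}{2} U^2 = \frac{(\gamma^2 - 2I)^{1/2}}{(1-(U')^2)^{1/2}}.$$
Substituting this into the previous relation yields
$$U'' = \frac{-U\bigl(1-(U')^2\bigr)^{3/2}}{(\gamma^2-2I)^{1/2}},$$
which coincides with the Euler--Lagrange equation $U'' = 2\Gamma U\,(1-(U')^2)^{3/2}$ if and only if $2\Gamma = -(\gamma^2-2I)^{-1/2}$, that is, exactly (\ref{Gamma-defined-mod}). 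Since under the stated assumptions $U$ is even, smooth, and $2\pi$-periodic, it lies in $H^s_{\rm per}$ for every $s$, and the computation above identifies the critical points of $R_{\Gamma}$ among such profiles.

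There is no genuine obstacle here; the two points requiring a little care are the sign determination of $\gamma - \frac{1}{2}U^2$, handled by $|U| < \sqrt{2\gamma}$, and the positivity $\gamma^2 - 2I > 0$, already established for $\gamma \in \left(1, \frac{\pi^2}{8}\right)$. The ``if and only if'' is immediate because the substitution reduces the validity of the Euler--Lagrange equation at $u = U$ to the single algebraic relation between $\Gamma$, $\gamma$, and $I$ displayed above.
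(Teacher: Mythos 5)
Your proposal is correct and follows essentially the same route as the paper: you derive the same Euler--Lagrange equation $\frac{d}{dz}\bigl[u'(1-(u')^2)^{-1/2}\bigr] - 2\Gamma u = 0$, i.e.\ $u'' = 2\Gamma u (1-(u')^2)^{3/2}$, and then substitute the expanded form of (\ref{second-order-mod}) together with the invariant identity (\ref{identity-mod}) to reduce the matter to the single algebraic relation giving (\ref{Gamma-defined-mod}). The only difference is that you spell out the intermediate algebra (including the sign determination of $\gamma - \tfrac12 U^2$) that the paper leaves implicit.
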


\begin{proof}
The Euler--Lagrange equation for the second energy functional $R_{\Gamma}(u)$ yields
another second-order differential equation
\begin{equation}\label{fourth-order-mod}
\frac{d}{dz} \left[ \frac{u'}{\sqrt{1 - (u')^2}} \right] - 2 \Gamma u = 0 \quad \Rightarrow \quad
\frac{d^2 u}{d z^2} = 2 \Gamma u (1 - (u')^2)^{3/2}.
\end{equation}
Substituting the second and first derivatives from (\ref{second-order-mod}) and (\ref{identity-mod})
to equation (\ref{fourth-order-mod}) with $u = U$, we obtain an identity if and only if $\Gamma$
is given by (\ref{Gamma-defined-mod}).
\end{proof}

\subsection{Second variation of the energy functionals}

Adding a perturbation $v \in L^2_{\rm per,zero}$ to the periodic wave $U$ and expanding
$S_{\gamma}(U+v) - S_{\gamma}(U)$ to the quadratic order in $v$, we obtain the second
variation in the form
\begin{equation}
\label{delta-2-S-mod}
\delta^2 S_{\gamma} = \int \left[ (\partial_z^{-1} v)^2 - \left(\gamma - \frac{1}{2} U^2 \right) v^2 \right] dz.
\end{equation}
As previously, we assume that $v$ is the $2\pi N$-periodic function with zero mean,
where $N$ is a positive integer. The second variation $\delta^2 S_{\gamma}$ is sign-indefinite, because
the first term of $\delta^2 S_{\gamma}$ is positive and the second term is negative.

Similarly, adding a perturbation $v \in H^1_{\rm per, zero}$ and
expanding $R_{\Gamma}(U+v) - R_{\Gamma}(U)$ to the quadratic order in $v$, we obtain
the second variation in the form
\begin{equation}
\label{delta-2-R-mod}
\delta^2 R_{\Gamma} = \int \left[ \frac{v^2}{2 (\gamma^2 - 2 I)^{1/2}} - \frac{v_z^2}{2(1-(U')^2)^{3/2}} \right] dz.
\end{equation}
Again, the second variation $\delta^2 R_{\Gamma}$ is sign-indefinite, because
the first term of $\delta^2 R_{\Gamma}$ is positive and the second term is negative.

The second variations $\delta^2 S_{\gamma}$ and $\delta^2 R_{\Gamma}$ can be expressed as
quadratic forms (\ref{operator-L}) and (\ref{operator-M}) associated with the operators
$$
L_{\gamma} := -\partial_z^{-2} - \gamma + \frac{1}{2} U^2 : \; L^2_{\rm per,zero} \to L^2_{\rm per,zero}.
$$
and
$$
M_{\gamma} := \frac{1}{2} (\gamma^2 - 2 I)^{-1/2} + \frac{1}{2} \partial_z (1-(U')^2)^{-3/2} \partial_z : \;
H^2_{\rm per, zero} \to L^2_{\rm per, zero}.
$$
The following two lemmas report analogues of Lemmas \ref{lemma-spectrum-S} and \ref{lemma-spectrum-R}.

\begin{lemma}
\label{lemma-spectrum-S-mod}
For every integer $N \geq 1$ and every $\gamma > 1$ such that $|\gamma - 1|$ is
sufficiently small, the spectrum of $L_{\gamma}$ in $L^2_{\rm per,zero}$
consists of $2N-1$ positive eigenvalues (counted by their multiplicity),
a simple zero eigenvalue, and an infinite number of negative eigenvalues.
\end{lemma}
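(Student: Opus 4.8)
The plan is to imitate the proof of Lemma~\ref{lemma-spectrum-S}, adapting the perturbative computation to the Stokes expansion (\ref{Stokes-mod})--(\ref{gamma-parameter-mod}). For $a=0$ one has $\gamma=1$, $U\equiv 0$, and $L_{\gamma=1}=-\partial_z^{-2}-1$; on the space of $2\pi N$-periodic zero-mean functions its spectrum is read off from the Fourier basis $\{\cos(mz/N),\sin(mz/N)\}_{m\geq 1}$ as the sequence of double eigenvalues $-1+N^2/m^2$, so there are $2(N-1)$ strictly positive eigenvalues (for $1\leq m\leq N-1$), a double zero eigenvalue (for $m=N$), and infinitely many strictly negative eigenvalues (for $m>N$). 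Since $L_{\gamma=1}+1=-\partial_z^{-2}$ is compact and $L_\gamma$ depends analytically on $a$ through $\gamma$ and $U$ (Lemma~\ref{lemma-small-amplitude-mod}), Theorem~VII.1.7 in \cite{Kato} guarantees that for $|\gamma-1|$ small the strictly positive and strictly negative parts of the spectrum stay in their respective half-lines, so that only the two eigenvalues near zero require attention.

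One of these stays pinned at the origin because $L_\gamma U'=0$ for every $\gamma\in\left(1,\frac{\pi^2}{8}\right)$: integrating (\ref{second-order-mod}) once with zero mean gives $\left(\gamma-\frac12 U^2\right)U'=-\partial_z^{-1}U$, whence $L_\gamma U'=-\partial_z^{-2}U'-\left(\gamma-\frac12 U^2\right)U'=-\partial_z^{-1}U+\partial_z^{-1}U=0$, and simplicity of this zero eigenvalue for small $a\neq 0$ follows from the splitting argument of Lemma~\ref{lemma-spectrum-S}. For the companion, even eigenfunction I would expand the operator in powers of $a$; since $\gamma-1=\mathcal{O}(a^2)$ and $U=\mathcal{O}(a)$, so $U^2=\mathcal{O}(a^2)$, there is no $\mathcal{O}(a)$ term and
\begin{equation*}
L_\gamma=-\partial_z^{-2}-1+a^2\left(\frac18+\frac14\cos(2z)\right)+\mathcal{O}_{L^2_{\rm per,zero}\to L^2_{\rm per,zero}}(a^3).
\end{equation*}
Looking for $\lambda=a\lambda_1+a^2\lambda_2+\mathcal{O}(a^3)$ and $V=\cos(z)+aV_1+a^2V_2+\mathcal{O}_{L^2_{\rm per,zero}}(a^3)$ with $\langle\cos(z),V_{1,2}\rangle_{L^2}=0$, the order-$a$ equation forces $\lambda_1=0$ and $V_1=0$, and the order-$a^2$ equation reads
\begin{equation*}
(1+\partial_z^{-2})V_2=\left(\frac18+\frac14\cos(2z)\right)\cos(z)-\lambda_2\cos(z)=\left(\frac14-\lambda_2\right)\cos(z)+\frac18\cos(3z),
\end{equation*}
so Fredholm's solvability condition (orthogonality of the right-hand side to $\cos(z)\in\ker(1+\partial_z^{-2})$) forces $\lambda_2=\frac14>0$; hence this eigenvalue leaves zero in the positive direction.

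Collecting the count: for $N=1$ there are no positive eigenvalues of $L_{\gamma=1}$, the pinned eigenvalue remains at zero, and the bifurcating even eigenvalue becomes positive, giving one positive eigenvalue, a simple zero eigenvalue, and infinitely many negative ones; for $N\geq 2$ the $2(N-1)$ persisting positive eigenvalues together with the one bifurcating from zero give $2N-1$, as claimed. I do not expect a genuine obstacle here: the only delicate point is the order bookkeeping, namely confirming that $L_\gamma$ carries no $\mathcal{O}(a)$ contribution (so, unlike in Lemma~\ref{lemma-spectrum-S}, there is no first-order shift to follow) and that the $\mathcal{O}(a^3)$ remainder in the eigenvalue expansion is controlled in the operator norm on $L^2_{\rm per,zero}$, both of which follow from the smoothness in $a$ of $U$ and $\gamma$ asserted in Lemma~\ref{lemma-small-amplitude-mod}.
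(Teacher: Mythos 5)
Your proposal is correct and follows essentially the same route as the paper: the unperturbed spectrum $-1+N^2/n^2$ with its double zero, Kato's perturbation theorem for the separated positive/negative parts, the exact relation $L_\gamma U'=0$ pinning one eigenvalue at zero, and a regular perturbation expansion (with no $\mathcal{O}(a)$ term since $\gamma-1=\mathcal{O}(a^2)$ and $U^2=\mathcal{O}(a^2)$) whose Fredholm condition gives $\lambda_2=\tfrac14>0$, exactly as in the paper's computation. The only cosmetic differences are that you carry the trivial order-$a$ step ($\lambda_1=0$, $V_1=0$) explicitly and verify $L_\gamma U'=0$ directly, which the paper leaves to the analogous argument of Lemma \ref{lemma-spectrum-S}.
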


\begin{proof}
For $a = 0$, $\gamma = 1$ and $U(z) = 0$, the spectrum of $L_{\gamma = 1}$ is given by the same
formula (\ref{spectrum-limiting}) as in the proof of Lemma \ref{lemma-spectrum-S}.
Therefore, it is only necessary to develop the
regular perturbation theory for the splitting of the double zero eigenvalue as $a \neq 0$.
We expand the operator $L_{\gamma}$ in powers of $a$:
$$
L_{\gamma} = -\partial_z^{-2} - 1 + \frac{a^2}{8} (4 \cos^2(z) - 1) + \mathcal{O}_{L^2_{\rm per,zero} \to L^2_{\rm per,zero}}(a^4).
$$
Since the $\mathcal{O}_{L^2_{\rm per,zero} \to L^2_{\rm per,zero}}(a)$ term is absent in the expansion
of $L_{\gamma}$, the perturbation expansion starting with the eigenfunction $V_0 = \cos(z)$ is shorter
than in the case of Lemma \ref{lemma-spectrum-S}:
\begin{equation}
\label{expansion-same}
\lambda = a^2 \lambda_2 + \mathcal{O}(a^4), \quad V = V_0 + a^2 V_2 + \mathcal{O}_{L^2_{\rm per,zero}}(a^4),
\end{equation}
where the linear inhomogeneous equation at the order of $a^2$ bears the form
$$
(1 + \partial_{z}^{-2}) V_2 = \frac{1}{2} \cos^3(z) - \left(\frac{1}{8} + \lambda_2\right) \cos(z).
$$
Fredholm's solvability condition yields
\begin{equation}
\label{lambda-2-S-mod}
\lambda_2 = \frac{1}{4},
\end{equation}
hence, the zero eigenvalue of $L_{\gamma = 1}$
associated with the even eigenfunction $V_0$ for $a = 0$
becomes a positive eigenvalue of $L_{\gamma}$ for nonzero but small values of $a$.
\end{proof}

\begin{lemma}
\label{lemma-spectrum-R-mod}
For every integer $N \geq 1$ and every $\gamma > 1$ such that $|\gamma - 1|$ is
sufficiently small, the spectrum of $M_{\gamma}$ in $L^2_{\rm per,zero}$
consists of $2N-2$ positive eigenvalues (counted by their multiplicity),
a simple zero eigenvalue, and an infinite number of negative eigenvalues.
\end{lemma}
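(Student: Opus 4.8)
The proof follows that of Lemma \ref{lemma-spectrum-R}, with the shortening of the perturbation expansion already encountered in Lemma \ref{lemma-spectrum-S-mod}. At $a=0$ we have $\gamma=1$, $I=0$, and $U\equiv 0$, so that $M_{\gamma=1}=\tfrac12+\tfrac12\partial_z^2$ on $2\pi N$-periodic zero-mean functions, with the sequence of double eigenvalues
\[
a=0:\quad \sigma(M_{\gamma=1})=\left\{\tfrac12\left(1-\tfrac{n^2}{N^2}\right),\ n\in\mathbb{N}\right\},
\]
read off the Fourier modes $\cos(nz/N),\sin(nz/N)$. For $N=1$ all of these are strictly negative except the double zero at $n=N=1$; for $N\geq 2$ there are in addition exactly $2(N-1)$ strictly positive double eigenvalues, namely those with $1\leq n\leq N-1$, all bounded away from $0$, and all the remaining ones are strictly negative and bounded away from $0$. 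Since $M_\gamma$ is an analytic family of self-adjoint operators in $a$ with constant domain $H^2_{\rm per,zero}$ — the coefficient $\tfrac12(1-(U')^2)^{-3/2}$ multiplying $\partial_z^2$ stays uniformly positive for $|a|$ small — the separation of the spectrum into the part near $0$ and the rest persists for small $a$ by Theorem VII.1.7 in \cite{Kato}, so the $2(N-1)$ positive and the negative eigenvalues keep their signs. It remains to follow the two eigenvalues emanating from the double zero at $n=N$, whose eigenspace at $a=0$ is $\mathrm{span}\{\cos z,\sin z\}$.

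One of them stays at $0$ for every $\gamma\in\left(1,\tfrac{\pi^2}{8}\right)$ because of the exact relation $M_\gamma U'=0$ coming from the translational invariance of (\ref{second-order-mod}), with $U'(z)=-a\sin z+\mathcal{O}(a^3)$ being the perturbation of $\sin z$. To verify $M_\gamma U'=0$, I would use (\ref{identity-mod}) to write $(1-(U')^2)^{-3/2}=(\gamma-\tfrac12 U^2)^3(\gamma^2-2I)^{-3/2}$, reducing $M_\gamma U'=0$ to $\partial_z\!\left[(\gamma-\tfrac12 U^2)^3 U''\right]+(\gamma^2-2I)U'=0$; then substituting $(\gamma-\tfrac12 U^2)U''=-U(1-(U')^2)$ from (\ref{second-order-mod}) and (\ref{identity-mod}) once more gives $(\gamma-\tfrac12 U^2)^3 U''=-(\gamma^2-2I)U$, whose $z$-derivative cancels the remaining term.

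For the even eigenvalue I would run regular perturbation theory from $V_0=\cos z$ with the normalization $\langle V_0,V_2\rangle_{L^2}=0$. Lemma \ref{lemma-small-amplitude-mod} gives $(U')^2=\tfrac{a^2}{2}(1-\cos 2z)+\mathcal{O}(a^4)$, hence $(1-(U')^2)^{-3/2}=1+\tfrac{3a^2}{4}(1-\cos 2z)+\mathcal{O}(a^4)$ and $(\gamma^2-2I)^{-1/2}=1+\tfrac38 a^2+\mathcal{O}(a^4)$, so that
\[
M_\gamma=\tfrac12+\tfrac12\partial_z^2+a^2\left[\tfrac{3}{16}+\tfrac38\,\partial_z(1-\cos 2z)\partial_z\right]+\mathcal{O}_{H^2_{\rm per,zero}\to L^2_{\rm per,zero}}(a^4).
\]
As in Lemma \ref{lemma-spectrum-S-mod}, the absence of an $\mathcal{O}(a)$ term makes the expansions $\lambda=a^2\lambda_2+\mathcal{O}(a^4)$, $V=V_0+a^2 V_2+\mathcal{O}_{L^2_{\rm per,zero}}(a^4)$ appropriate, and at order $a^2$ Fredholm solvability against $\cos z$ yields
\[
\lambda_2=\frac{\left\langle\cos z,\left[\tfrac{3}{16}+\tfrac38\partial_z(1-\cos 2z)\partial_z\right]\cos z\right\rangle_{L^2}}{\|\cos z\|_{L^2}^2}=-\tfrac38<0,
\]
so this eigenvalue becomes strictly negative for small $a\neq 0$. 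Assembling the $2N-2$ persistent positive eigenvalues, the simple pinned zero eigenvalue, and the infinitely many negative eigenvalues (the persistent ones together with the newly negative one) proves the claim for every $N\geq 1$. The computations are entirely routine; the only point requiring a little care is the separation-of-spectrum step, since the leading coefficient of $M_\gamma$ now varies with $a$ (unlike the constant-coefficient leading term of the fourth-order operator in Lemma \ref{lemma-spectrum-R}), but this is handled exactly as in the proofs of Lemmas \ref{lemma-spectrum-S} and \ref{lemma-spectrum-R}.
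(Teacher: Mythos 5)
Your proposal is correct and follows essentially the same route as the paper: the unperturbed spectrum $\tfrac12(1-n^2/N^2)$, the zero eigenvalue pinned by $M_\gamma U'=0$ via (\ref{identity-mod}) and (\ref{second-order-mod}), and the shortened perturbation expansion (no $\mathcal{O}(a)$ term) with Fredholm solvability giving $\lambda_2=-\tfrac38$; your operator expansion $\tfrac{3a^2}{16}+\tfrac{3a^2}{8}\partial_z(1-\cos 2z)\partial_z$ coincides with the paper's $\tfrac{3a^2}{16}\bigl[1+4\partial_z\sin^2(z)\partial_z\bigr]$. The only additions are the explicit verification of $M_\gamma U'=0$ (which the paper carries out only in the quadratic case) and the remark on the $a$-dependent leading coefficient, both consistent with the paper's argument.
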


\begin{proof}
For $a = 0$, $\gamma = 1$, $I = 0$, and $U(z) = 0$, the spectrum of $M_{\gamma = 1}$ is given by
\begin{equation}
\label{spectrum-limiting-M-mod}
a = 0 : \quad \sigma(M_{\gamma = 1}) = \left\{ \frac{1}{2} \left(1  - \frac{n^2}{N^2} \right), \quad n \in \mathbb{N} \right\}.
\end{equation}
The zero is again a double eigenvalue, whereas the spectrum of $M_{\gamma = 1}$ is similar
to the one given by (\ref{spectrum-limiting-M}).
Therefore, we again apply the regular perturbation theory to check the splitting of the
double zero eigenvalue. We expand the operator $M_{\gamma}$ in powers of $a$:
$$
M_{\gamma} = \frac{1}{2} (1 + \partial_z^2) +
\frac{3 a^2}{16} \left[ 1 + 4 \partial_z \sin^2(z) \partial_z \right] + \mathcal{O}_{H^2_{\rm per, zero} \to L^2_{\rm per,zero}}(a^4).
$$
Using the same asymptotic expansion (\ref{expansion-same}), we obtain the linear inhomogeneous equation at $\mathcal{O}(a^2)$:
$$
\frac{1}{2} (1 + \partial_{z}^2) V_2 = -\frac{3}{4} \partial_z \sin^2(z) \partial_z \cos(z)
+ \left( \lambda_2 - \frac{3}{16} \right) \cos(z).
$$
Fredholm's solvability condition yields
\begin{equation}
\label{lambda-2-R-mod}
\lambda_2 = -\frac{3}{8},
\end{equation}
hence, the zero eigenvalue of $M_{\gamma = 1}$
associated with the even eigenfunction $V_0$ for $a = 0$
becomes a negative eigenvalue of $M_{\gamma}$ for nonzero but small values of $a$.
\end{proof}

\subsection{Positivity of a linear combination of the second variations $\delta^2 S_{\gamma}$ and $\delta^2 R_{\Gamma}$}

Let us define the Lyapunov functional $\Lambda_{c,\gamma}(u)$ by using the same linear combination
(\ref{Lyapunov}) of the energy functionals $S_{\gamma}(u)$ and $R_{\Gamma}(u)$ given by (\ref{first-Lyapunov}) and
(\ref{second-Lyapunov}), where parameter $c \in \mathbb{R}$ is to be defined within an appropriate interval.
The second variation of $\Lambda_{c,\gamma}$, denoted by $\delta^2 \Lambda_{c,\gamma}$,
is defined for the perturbation $v$ in $H^1_{\rm per, zero}$.
The following result shows that the second variation $\delta^2 \Lambda_{c,\gamma = 1}$
is positive for a particular value of the parameter $c$.

\begin{lemma}
\label{lemma-positivity-mod}
$\delta^2 \Lambda_{c,\gamma = 1} \geq 0$ for every $v \in H^1_{\rm per, zero}$ and every $N \geq 1$ 
if and only if $c = 2$.
\end{lemma}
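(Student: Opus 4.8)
The plan is to follow the proof of Lemma~\ref{lemma-positivity} step by step. First I would evaluate the two second variations at the trivial wave by setting $a = 0$, $\gamma = 1$, $I = 0$, and $U(z) = 0$ in (\ref{delta-2-S-mod}) and (\ref{delta-2-R-mod}). At $a = 0$ one has $\gamma^2 - 2I = 1$ and $1 - (U')^2 = 1$, so that
\begin{equation*}
a = 0 : \quad \delta^2 \Lambda_{c,\gamma = 1} = \int \left[ (\partial_z^{-1} v)^2 - \left( 1 + \frac{c}{2} \right) v^2 + \frac{c}{2} v_z^2 \right] dz ,
\end{equation*}
which is well defined for $v \in H^1_{\rm per, zero}$, in particular in the space of $2\pi N$-periodic zero-mean functions. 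As in Lemma~\ref{lemma-positivity}, to handle every integer $N$ simultaneously I would look for the values of $c$ for which $\delta^2 \Lambda_{c,\gamma = 1} \ge 0$ for every $v \in H^1(\R) \cap \dot H^{-1}(\R)$; by the Fourier transform on the line this is equivalent to
\begin{equation*}
D_c(k) := \frac{c}{2} k^2 - \left( 1 + \frac{c}{2} \right) + k^{-2} \ge 0, \qquad k \in \R .
\end{equation*}

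Next I would analyze $D_c$. The case $c \le 0$ is excluded: for $c < 0$ the term $\frac{c}{2} k^2$ is unbounded below, and for $c = 0$ one has $D_0(k) = -1 + k^{-2} < 0$ for $|k| > 1$. For $c > 0$, the equation $D_c'(k) = c k - 2 k^{-3} = 0$ gives critical points at $k = \pm k_c$ with $k_c^4 = 2/c$, and $D_c''(k) = c + 6 k^{-4} > 0$ shows these are minima. A short computation gives $\frac{c}{2} k_c^2 = k_c^{-2} = \sqrt{c/2}$, hence
\begin{equation*}
F(c) := D_c(k_c) = \sqrt{2c} - 1 - \frac{c}{2} .
\end{equation*}
Since $F'(c) = (2c)^{-1/2} - \frac12$ vanishes only at $c = 2$ and $F''(c) = -(2c)^{-3/2} < 0$, the function $F$ attains its maximum value $F(2) = 0$ at $c = 2$; thus $F(c) \le 0$ with equality only for $c = 2$, and $c = 2$ is the only candidate.

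Finally I would verify that $c = 2$ does work by exhibiting the factorization
\begin{equation*}
D_{c = 2}(k) = k^2 - 2 + k^{-2} = \frac{(k^2 - 1)^2}{k^2} \ge 0 ,
\end{equation*}
which yields $\delta^2 \Lambda_{2,\gamma = 1} \ge 0$ for all admissible $v$ and completes the proof. The argument is entirely parallel to that of Lemma~\ref{lemma-positivity}, so I do not expect a genuine obstacle; the only points needing a line of care are the standard reduction from ``$2\pi N$-periodic for every $N$'' to ``on the whole line'' and the bookkeeping of the factors of $\frac12$ inherited from (\ref{delta-2-R-mod}) and (\ref{Gamma-defined-mod}), which is precisely what shifts the special value from $c = \frac12$ in Lemma~\ref{lemma-positivity} to $c = 2$ here.
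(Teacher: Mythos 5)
Your proposal is correct and follows essentially the same route as the paper's proof: evaluate the second variations at $a=0$, reduce positivity to the symbol inequality $D_c(k) = \frac{c}{2}k^2 - 1 - \frac{c}{2} + k^{-2} \geq 0$ on the line, locate the minima at $k_c^4 = 2/c$, show $F(c) = \sqrt{2c} - 1 - \frac{c}{2}$ has its maximum value $0$ only at $c = 2$, and confirm via the factorization $D_2(k) = (1-k^2)^2/k^2$. All the computations check out, so nothing needs to be changed.
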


\begin{proof}
We substitute $a = 0$, $\gamma = 1$, $I = 0$, and $U(z) = 0$ to $\delta^2 \Lambda_{c, \gamma = 1}$
and obtain
\begin{equation}
\label{Lyapunov-zero-mod}
a = 0 : \quad \delta^2 \Lambda_{c,\gamma = 1} = \int \left[ (\partial_z^{-1} v)^2 - \left( 1+\frac{c}{2} \right) v^2
+ \frac{c}{2} (\partial_z v)^2 \right] dz.
\end{equation}
Performing the Fourier transform on the line, we reduce the problem of positivity of $\delta^2 \Lambda_{c,\gamma = 1}$
to the search of values of $c$, for which 
$$
D_c(k) := \frac{c}{2} k^2 - 1 - \frac{c}{2} + k^{-2} \geq 0, \quad k \in \mathbb{R}.
$$
Again, only values $c > 0$ need to be considered.
Since $D_c'(k) = c k - 2 k^{-3}$, the critical points of $D_c$ correspond to $k_c^4 = \frac{2}{c}$ and exist if
$c > 0$. Since $D_c''(k) = c + 6 k^{-4} > 0$, the critical points of $D_c$ at $k = \pm k_c$
are the points of minimum of $D_c$. Therefore, we can compute
$$
F(c) := D_c(k_c) = \frac{c}{2} k_c^2 - 1 - \frac{c}{2} + k_c^{-2} =  (2c)^{1/2} - 1 - \frac{c}{2}.
$$
Since $F'(c) = \frac{1}{(2c)^{1/2}} - \frac{1}{2}$, the only critical point of $F$ occurs at $c_0 = 2$.
Since $F''(c) < 0$, $c_0$ is the point of maximum of $F$,
for which we have $F(c_0) = 0$. Therefore, $F(c) \leq 0$ and the only possible value of $c$
for which $D_c(k) \geq 0$ for every $k \in \mathbb{R}$ is the value $c = c_0 = 2$. Indeed,
in this case, we can factorize the dispersion relation in the form
\begin{equation}
\label{factorization-mod}
D_{c = c_0}(k) = k^2 - 2 + \frac{1}{k^2} = \frac{(1-k^2)^2}{k^2} \geq 0.
\end{equation}
The statement of the lemma is proved.
\end{proof}

The following theorem represents the main result of this paper
for the modified reduced Ostrovsky equation (\ref{redModOst}).
It shows that
the second variation of $\delta^2 \Lambda_{c,\gamma}$ is positive for every
$v \in H^1_{\rm per, zero}$ and every $\gamma > 1$ such that $|\gamma - 1|$ is sufficiently small,
if $c$ is chosen within an appropriate interval.
Since $|U'(z)| < 1$ for every $z \in \mathbb{R}$ if $\gamma \in \left(1,\frac{\pi^2}{8}\right)$,
a global solution to  the reduced
modified Ostrovsky equation (\ref{redModOst}) exists for sufficiently small
$v \in H^s_{\rm per,zero}$ with $s > \frac{3}{2}$ \cite{LPS2,PelSak}. The value of $\Lambda_{c,\gamma}$
conserves in the time evolution of the reduced modified Ostrovsky equation (\ref{redModOst}). 
Orbital stability of the periodic wave with profile $U$ with respect to subharmonic perturbations
$v \in H^s_{\rm per, zero}$ with $s > \frac{3}{2}$  follows from
the positivity of $\delta^2 \Lambda_{c,\gamma}$ \cite{GP1}.
The choice of $H^s_{\rm per,zero}$ with $s > \frac{3}{2}$ is explained by the necessity
to control $v_z \in L^{\infty}_{\rm per}$ by Sobolev's embedding.

\begin{theorem}
\label{theorem-red-Mod-Ost}
There exists $\gamma_0 \in \left(1,\frac{\pi^2}{8}\right)$ and $C > 0$ such that
$\delta^2 \Lambda_{c,\gamma} \geq C \| v \|_{H^1_{\rm per}}^2$
for every $\gamma \in (1,\gamma_0)$ and every $v \in H^1_{\rm per, zero}$
such that $\langle U', v \rangle_{L^2_{\rm per}} = 0$, if $c \in (c_-,c_+)$, where $c_{\pm}$
are given by the asymptotic expansion
\begin{equation}
\label{exact-c-plus-minus-mod}
c_{\pm} = 2 \pm 2a + \mathcal{O}(a^2),
\end{equation}
where $a$ determines $\gamma$ and $I$ in the Stokes expansions
(\ref{gamma-parameter-mod}) and (\ref{E-parameter-mod}).
\end{theorem}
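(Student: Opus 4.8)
The plan is to follow the proof of Theorem~\ref{theorem-red-Ost}, working with the self-adjoint operator $\mathcal{L}_{c,\gamma}:=L_\gamma-cM_\gamma$, for which $\delta^2\Lambda_{c,\gamma}=\langle\mathcal{L}_{c,\gamma}v,v\rangle_{L^2}$. First I would perform a Bloch--Floquet decomposition of a $2\pi N$-periodic zero-mean perturbation, writing $v=\sum_{\theta}e^{i\theta z}w_\theta$ with $w_\theta$ $2\pi$-periodic and $\theta\in\frac1N\mathbb{Z}\cap(-\tfrac12,\tfrac12]$, so that $\delta^2\Lambda_{c,\gamma}=\sum_\theta\langle\mathcal{L}_{c,\gamma,\theta}w_\theta,w_\theta\rangle$, where $\mathcal{L}_{c,\gamma,\theta}$ is obtained from $\mathcal{L}_{c,\gamma}$ by $\partial_z\mapsto\partial_z+i\theta$ and is self-adjoint on $2\pi$-periodic functions. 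Since $U'$ is $2\pi$-periodic, the constraint $\langle U',v\rangle_{L^2}=0$ restricts only the $\theta=0$ fibre, so it suffices to establish that $\mathcal{L}_{c,\gamma,\theta}$ is nonnegative for each $\theta\neq0$ with a coercive $\langle k\rangle^2$ bound away from a single small eigenvalue, and that $\mathcal{L}_{c,\gamma,0}$ restricted to the orthogonal complement of $U'$ is coercive, for $|\gamma-1|$ small and $c$ near $c_0=2$.

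Next I would isolate the only part of the spectrum that is not manifestly coercive. By Lemma~\ref{lemma-positivity-mod}, at $a=0$, $c=2$ the operator $\mathcal{L}_{2,1,\theta}$ acts on $e^{inz}$ by multiplication by $D_2(n+\theta)$ with $D_2(k)=(1-k^2)^2/k^2\ge0$, which for $n\neq\pm1$ is bounded below by a positive constant comparable to $\langle n\rangle^2$; the only obstruction is the double zero eigenvalue of $\mathcal{L}_{2,1,0}$ on $\mathrm{span}\{\cos z,\sin z\}$. For the two eigenvalues bifurcating from it as $a$ and $\theta$ become small I would run the regular perturbation theory of Lemmas~\ref{lemma-spectrum-S-mod} and~\ref{lemma-spectrum-R-mod} jointly in both parameters: using the Stokes expansion~(\ref{Stokes-mod}), the absence of $\mathcal{O}(a)$ terms in $L_\gamma$ and $M_\gamma$, the Taylor data $D_c(1)=0$, $D_c'(1)=c-2$, $D_c''(1)=c+6$, and the diagonal corrections $\langle\cos z,W_L\cos z\rangle/\|\cos z\|^2=\tfrac14$, $\langle\cos z,W_M\cos z\rangle/\|\cos z\|^2=-\tfrac38$ already computed there, the effective $2\times2$ matrix for these eigenvalues in the basis $\{\cos z,\sin z\}$ is
\begin{equation*}
\mathcal{M}(a,\theta,c)=a^2\Big(\tfrac14+\tfrac{3c}{8}\Big)\begin{pmatrix}1&0\\0&0\end{pmatrix}+(c-2)\,\theta\begin{pmatrix}0&-i\\i&0\end{pmatrix}+\tfrac{c+6}{2}\,\theta^2\begin{pmatrix}1&0\\0&1\end{pmatrix}+(\text{h.o.t.}),
\end{equation*}
where the $\sin z$-diagonal entry vanishes because $\mathcal{L}_{c,\gamma}U'=0$ and the order-$a^2$ off-diagonal entry vanishes by evenness of the potentials.

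The heart of the argument is to read off the smaller eigenvalue $\lambda_-(a,\theta,c)$ of $\mathcal{M}$: one has $\lambda_-(a,0,c)=0$ (the translation mode $U'\approx-a\sin z$), while near $\theta=0$
\begin{equation*}
\lambda_-(a,\theta,c)=\Big[\tfrac{c+6}{2}-\frac{(c-2)^2}{a^2\big(\tfrac14+\tfrac{3c}{8}\big)}\Big]\theta^2+\mathcal{O}(\theta^4).
\end{equation*}
The bracket is nonnegative exactly when $(c-2)^2\le\tfrac12(c+6)\,a^2\big(\tfrac14+\tfrac{3c}{8}\big)$, which for $c$ near $2$ reduces to $|c-2|\le 2a+\mathcal{O}(a^2)$, that is, $c\in(c_-,c_+)$ with $c_\pm=2\pm2a+\mathcal{O}(a^2)$ as claimed; for such $c$ one checks separately that $\lambda_-(a,\theta,c)\ge0$ on the remaining range of $\theta$, where $\tfrac{c+6}{2}\theta^2$ dominates. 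Assembling the fibres, the $\theta=0$ fibre contributes $a^2\big(\tfrac14+\tfrac{3c}{8}\big)+\mathcal{O}(a^4)>0$ on $(U')^\perp$, each $\theta\neq0$ fibre contributes $\mathcal{M}(a,\theta,c)\ge0$ together with the $\langle k\rangle^2$ bound on the complementary directions, and summing yields $\delta^2\Lambda_{c,\gamma}\ge C\|v\|_{H^1_{\rm per}}^2$ for $v\in H^1_{\rm per,zero}$ with $\langle U',v\rangle_{L^2}=0$.

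The main obstacle is precisely the joint resolution of the double zero eigenvalue at $(a,\theta,c)=(0,0,2)$. Expanding in $a$ first (at fixed $\theta$) or in $\theta$ first (at fixed $a$) destroys the competition in $\lambda_-$ between the interaction term $(c-2)\theta$ and the gap $\tfrac12 a^2\big(\tfrac14+\tfrac{3c}{8}\big)$; it is this competition --- $(c-2)^2\theta^2$ against $a^2\theta^2$ --- that fixes the sharp half-width $2a$ of the admissible interval of $c$, exactly as the half-width $\tfrac{\sqrt3}{2}a$ arose in Theorem~\ref{theorem-red-Ost}. A further technical point is that $\lambda_-(a,\theta,c)$ is only quadratically small as $\theta\to0$, so the coercivity constant near $\theta=0$ has to be taken from the $\theta^2$-coefficient above, which stays positive as long as $c$ is kept a fixed proportion inside $(c_-,c_+)$ and $\gamma$ ranges over a compact subinterval of $(1,\gamma_0)$.
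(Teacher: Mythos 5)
Your proposal is correct and is essentially the paper's own argument: a Floquet--Bloch analysis whose only delicate point is the pair of bands emanating from the double zero mode $\mathrm{span}\{\cos z,\sin z\}$, with the same data $\lambda_2(c)=\tfrac{2+3c}{8}$ (from Lemmas \ref{lemma-spectrum-S-mod} and \ref{lemma-spectrum-R-mod}), coupling coefficient $c-2$ (i.e.\ $\mu_1=1$), and curvature $\tfrac12\lambda''(0)=4$, leading to the same balance $4-\tfrac{(c-2)^2}{a^2}\geq 0$ and hence $c_{\pm}=2\pm 2a+\mathcal{O}(a^2)$, exactly as in the paper's verification of Assumptions \ref{assumption-1}--\ref{assumption-3} and application of Proposition \ref{proposition-main}. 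The only difference is organizational: you run a joint $2\times 2$ degenerate reduction in $(a,\theta)$ on $\mathrm{span}\{\cos z,\sin z\}$, while the paper expands the two bands separately after the $\mathcal{O}(a^2)$ gap has split them; the remark following Proposition \ref{proposition-main} notes that your two-mode reduction is indeed unproblematic for this cubic case because the Stokes expansion (\ref{Stokes-mod}) contains no $\mathcal{O}(a^2)$ term.
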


\begin{remark}
Using numerical computations, we will show in Section 5 that the result of Theorem \ref{theorem-red-Mod-Ost}
extends to every $\gamma$ in the interval $\left(1,\frac{\pi^2}{8}\right)$.
\end{remark}

\section{Floquet--Bloch bands for periodic waves of small amplitudes}

We consider the second variation of the Lyapunov functional $\Lambda_{c,\gamma}$ defined by (\ref{Lyapunov})
at the periodic wave profile $U$ for nonzero but small amplitude parameter $a$. In order to prove
positivity of $\delta^2 \Lambda_{c,\gamma}$ and thus to prove Theorems \ref{theorem-red-Ost} and \ref{theorem-red-Mod-Ost},
we develop a general perturbation method, which is applied to both versions of the reduced Ostrovsky equations.

\subsection{General perturbation method}

We are interested to characterize the spectrum of the linear operator $K_{c,\gamma} := L_{\gamma} - c M_{\gamma}$ in
$L^2_{\rm per,zero}$ with the domain $X_{\rm per, zero} \subset L^2_{\rm per,zero}$, where
the linear operators $L_{\gamma}$ and $M_{\gamma}$ define the quadratic forms (\ref{operator-L})
and (\ref{operator-M}). The parameter $c \in \mathbb{R}$ is to be defined within an appropriate
interval. Here $L^2_{\rm per,zero}$ denotes the space of $2\pi N$-periodic functions with zero mean,
where $N$ is a positive integer, whereas $X_{\rm per, zero}$ is the domain of the linear operator $K_{c,\gamma}$
given by a Sobolev space $H^s_{\rm per, zero}$ for some $s \geq 0$.
In order to obtain results uniformly in $N$, we consider
the Floquet--Bloch spectrum of $K_{c,\gamma}$ in $L^2(\mathbb{R})$ with the domain $X(\R) \subset L^2(\mathbb{R})$.

By construction, $K_{c,\gamma}$ is a self-adjoint operator in $L^2(\mathbb{R})$ with
$2\pi$-periodic coefficients. By the Floquet--Bloch theory,
we look for $2\pi$-periodic Bloch wave functions $w(\cdot,\kappa) \in
X_{\rm per}$ with the quasi-momentum parameter $\kappa$ defined in the Brillouin zone
$\mathbb{T} = \left[-\frac{1}{2},\frac{1}{2}\right]$ such that
$$
e^{i \kappa z} w(z,\kappa) \in L^\infty(\R) \cap X_{\rm loc}(\R)
$$
is an eigenfunction of $K_{c,\gamma}$ for an eigenvalue $\lambda(\kappa)$. We say that
$\lambda(\kappa)$ belongs to the Floquet--Bloch spectrum of $K_{c,\gamma}$ in $L^2(\mathbb{R})$.
Both $w(z,\kappa)$ and $\lambda(\kappa)$ depend also on parameters $c$ and $\gamma$
but we neglect listing this dependence explicitly.

Let us further postulate the Stokes expansion for the periodic wave profile $U$
in the normalized form:
\begin{equation}
\label{Stokes-again}
U(z) = a \cos(z) + a^2 \tilde{U}_a(z), \quad \gamma = 1 + a^2 \tilde{\gamma}_a,
\end{equation}
where $\tilde{U}_a$ is an even $2\pi$-periodic function such that $\langle \cos(\cdot), \tilde{U}_a \rangle_{L^2_{\rm per}} = 0$
and $a^2 \tilde{U}_a = \mathcal{O}_{L^2_{\rm per}}(a^2)$ as $a \to 0$,
whereas $a^2 \tilde{\gamma}_a = \mathcal{O}(a^2)$ as $a \to 0$.

Let us denote $P_{c,\gamma}(\kappa) := e^{-i \kappa z} K_{c,\gamma} e^{i \kappa z}$.
Assuming smoothness of $P_{c,\gamma}(\kappa)$ with respect to the small amplitude parameter $a$,
we expand it in powers of $a$:
\begin{equation}
\label{P-c-expansion}
P_{c,\gamma}(\kappa) = P^{(0)}_{c}(\kappa) + a  P^{(1)}_c(\kappa) + a^2 P^{(2)}_c(\kappa) +
\mathcal{O}_{X_{\rm per} \to L^2_{\rm per}}(a^3).
\end{equation}
The operator $P^{(0)}_c(\kappa)$ has constant coefficients, and its spectrum
in the space $L^2_{\rm per}$ consists of a countable family of
real eigenvalues $\{\lambda_n^{(0)}(\kappa) \}_{n \in \mathbb{Z}}$.
The following assumption ensures that there exist two unperturbed Floquet--Bloch
bands, which touch zero at $\kappa = 0$ as convex functions, if $c = c_0$,
whereas all other bands are bounded away from zero by a positive number, see, e.g.,
the results of Lemmas \ref{lemma-positivity} and \ref{lemma-positivity-mod}.

\begin{assumption}
There exists $c_0$ such that for $c = c_0$, we have $\lambda_n^{(0)}(\kappa) \ge 0$ for all $n
\in \mathbb{Z}$ and all $\kappa \in \mathbb{T}$. Moreover, there exist exactly two bands
$\lambda_{\pm 1}^{(0)}(\kappa)$ which are smooth functions in $\kappa$ and behave like
\begin{equation}
\label{bands-convex}
\lambda_{\pm 1}^{(0)}(\kappa) = \frac{1}{2} \lambda''(0) \kappa^2 + \mathcal{O}(\kappa^3) \quad \mbox{\rm as}
\quad \kappa \to 0,
\end{equation}
where $\lambda''(0) > 0$. Furthermore, there exists $C > 0$ such that
\begin{equation}
\label{bands-bounded}
\lambda_n^{(0)}(\kappa) \geq C, \quad \mbox{\rm for all \;} n \in \mathbb{Z} \backslash \{+1,-1\}
\mbox{\rm \; and all \;} \kappa \in \mathbb{T}.
\end{equation}
\label{assumption-1}
\end{assumption}

By translational symmetry, we know that $P_{c,\gamma}(0) U' = 0$ for every $c \in \mathbb{R}$
and every $\gamma$ such that $|\gamma - 1|$ is sufficiently small,
see, e.g., the proofs of Lemmas \ref{lemma-spectrum-S} and \ref{lemma-spectrum-R}.
As a result, there exists at least one Floquet--Bloch band
of $P_{c,\gamma}(\kappa)$ that touches zero at $\kappa = 0$ for every $c$ and $\gamma$.
The other zero eigenvalue of $K_{c,\gamma} = P_{c,\gamma}(0)$ exists at $a = 0$ but is supposed to shift to a small
positive number for nonzero but small $a$, according to the following assumption.

\begin{assumption}
For every $c$ such that $|c-c_0|$ is sufficiently small,
there exists two eigenvalues of $K_{c,\gamma} = P_{c,\gamma}(0)$ in $L^2_{\rm per}$ in the neighborhood of zero.
One eigenvalue is identically zero for every small nonzero $a$, whereas the other eigenvalue
$\lambda(a,c)$ is strictly positive and satisfies the asymptotic expansion
\begin{equation}
\label{eigenvalue-positive}
\lambda(a,c) = \lambda_2(c) a^2 + a^3 \tilde{\lambda}(a,c),
\end{equation}
where $\lambda_2(c)$ is smooth in $c$ with $\lambda_2(c_0) > 0$,
whereas $\tilde{\lambda}(a,c)$ is smooth in $(a,c)$ and bounded as $a \to 0$ and $c \to c_0$.
\label{assumption-2}
\end{assumption}

Let $W_a$ be the eigenfunction of $K_{c,\gamma}$ for the zero eigenvalue,
which is independent of $c$. Let $V_{a,c}$ be the eigenfunction of $K_{c,\gamma}$
for the eigenvalue $\lambda(a,c)$ given by (\ref{eigenvalue-positive})
in Assumption \ref{assumption-2}. From the Stokes expansion (\ref{Stokes-again}),
we can set the normalized eigenfunctions to the form
\begin{equation}
\label{eigenfunction-expansions}
W_a(z) := -a^{-1} U'(z) = \sin(z) + a \tilde{W}_a(z), \quad
V_{a,c}(z) = \cos(z) + a \tilde{V}_{a,c}(z),
\end{equation}
where the correction terms $\tilde{W}_a$ and $\tilde{V}_{a,c}$ are uniquely defined
and bounded in $L^2_{\rm per}$ as $a \to 0$ and $c \to c_0$.
Finally, we assume a technical non-degeneracy condition.

\begin{assumption}
Besides $P_{c,\gamma}(0) W_a = 0$ and $P_{c,\gamma}(0) V_{a,c} = \lambda(a,c) V_{a,c}$, we have
\begin{equation}
\label{non-degeneracy}
P_{c,\gamma}'(0) W_a = -i \mu_1 (c - c_0) \cos(z) + a F_c(z) + a^2 G_{a,c}(z),
\end{equation}
and
\begin{equation}
\label{non-degeneracy-ex}
P_{c,\gamma}'(0) V_{a,c} = i \mu_1 (c - c_0) \sin(z) + a \tilde{F}_c(z) + a^2 \tilde{G}_{a,c}(z),
\end{equation}
where $\mu_1$ is a real nonzero number, $F_c$ and $\tilde{F}_c$ are respectively even and odd functions such that
$$
\langle \cos(\cdot), F_c \rangle_{L^2_{\rm per}} = 0 \quad \mbox{\rm and} \quad
\langle \sin(\cdot), F_c \rangle_{L^2_{\rm per}} = 0,
$$
both functions are bounded in $L^2_{\rm per}$ as $c \to c_0$, whereas $G_{a,c}$ and $\tilde{G}_{a,c}$ are
bounded functions in $L^2_{\rm per}$ as $a \to 0$ and $c \to c_0$.
\label{assumption-3}
\end{assumption}

Under Assumptions \ref{assumption-1}, \ref{assumption-2}, and \ref{assumption-3}, we obtain
a sufficient condition that the two bands in Assumption \ref{assumption-1}
separate from each other for nonzero but small values of $a$. If $c$ is selected near $c_0$, 
one band still touches the origin at $\kappa = 0$ while the other one remains
strictly positive for all $\kappa \in \mathbb{T}$, with both bands remaining convex functions of $\kappa$.
The following proposition gives the general result.

\begin{proposition}
\label{proposition-main}
Consider expansion (\ref{P-c-expansion}) of the self-adjoint operator $P_{c,\gamma}(\kappa)$
for sufficiently small amplitude $a$ and assume that Assumptions
\ref{assumption-1}, \ref{assumption-2}, and \ref{assumption-3} are true.
Then, for every $c$ satisfying $|c - c_0| \leq C |a|$, where $C$ is a positive $a$-independent constant,
the two lowest spectral bands of the operator $P_{c,\gamma}(\kappa)$ denoted
by $\lambda_{\rm gr}(\kappa)$ and $\lambda_{\rm ex}(\kappa)$
satisfy the asymptotic expansions
\begin{equation}
\label{band-perturbation-lowest}
\lambda_{\rm gr}(\kappa) = \left( \frac{1}{2} \lambda''(0) - \frac{\mu_1^2 (c-c_0)^2}{\lambda_2(c_0) a^2} + \mathcal{O}(a) \right) \kappa^2
+ \kappa^3 \tilde{\lambda}_{\rm gr}
\end{equation}
and
\begin{equation}
\label{band-perturbation-second}
\lambda_{\rm ex}(\kappa) = \lambda_2(c_0) a^2 + \mathcal{O}(a^3) + \left( \frac{1}{2} \lambda''(0) +
\frac{\mu_1^2 (c-c_0)^2}{\lambda_2(c_0) a^2}  + \mathcal{O}(a) \right) \kappa^2
+ \kappa^3 \tilde{\lambda}_{\rm ex},
\end{equation}
where $\tilde{\lambda}_{\rm gr}$ and $\tilde{\lambda}_{\rm ex}$ are bounded as $a \to 0$ and $\kappa \to 0$.
\end{proposition}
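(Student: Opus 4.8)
The plan is to carry out a two-step perturbation analysis of the Bloch operator $P_{c,\gamma}(\kappa)$: first expand in the small amplitude $a$ at fixed quasi-momentum $\kappa = 0$ to locate the two relevant eigenvalues, then expand in $\kappa$ around $\kappa = 0$ to capture the convexity of the bands, treating $c - c_0 = \mathcal{O}(a)$ throughout so the two expansions are genuinely coupled. By Assumption~\ref{assumption-1}, at $a = 0$, $c = c_0$ the operator $P^{(0)}_{c_0}(\kappa)$ has two bands $\lambda_{\pm 1}^{(0)}(\kappa)$ that touch zero quadratically at $\kappa = 0$ and all other bands uniformly bounded below by $C > 0$. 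Since the perturbation in $a$ is bounded (uniformly on the relevant Sobolev scale by (\ref{P-c-expansion})) and $|c - c_0| \leq C|a| \to 0$, the remaining bands stay bounded away from zero for small $a$, so by the Kato decomposition argument (as in the proof of Lemma~\ref{lemma-spectrum-S}) it suffices to analyze the two-dimensional spectral subspace associated with $\lambda_{\pm 1}^{(0)}$. At $\kappa = 0$ this subspace is spanned by $W_a$ and $V_{a,c}$ from (\ref{eigenfunction-expansions}); Assumption~\ref{assumption-2} already tells us the eigenvalues there are $0$ (for $W_a$, exactly, by translational symmetry) and $\lambda(a,c) = \lambda_2(c_0) a^2 + \mathcal{O}(a^3)$ (for $V_{a,c}$).

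Next I would set up the reduced $2 \times 2$ matrix. Write $P_{c,\gamma}(\kappa) = P_{c,\gamma}(0) + \kappa P_{c,\gamma}'(0) + \tfrac12 \kappa^2 P_{c,\gamma}''(0) + \mathcal{O}(\kappa^3)$ and project onto $\mathrm{span}\{W_a, V_{a,c}\}$. The zeroth-order (in $\kappa$) block is $\mathrm{diag}(0, \lambda(a,c))$. The first-order-in-$\kappa$ block is governed by Assumption~\ref{assumption-3}: the key matrix element is $\langle V_{a,c}, P_{c,\gamma}'(0) W_a \rangle$. Using (\ref{non-degeneracy}) and (\ref{eigenfunction-expansions}), together with the orthogonality conditions on $F_c$ (which kill the $a F_c$ contribution against $\cos(z)$ at leading order), this element equals $-i \mu_1 (c - c_0) \langle \cos(\cdot), \cos(\cdot) \rangle_{L^2_{\rm per}} + \mathcal{O}(a(c-c_0)) + \mathcal{O}(a^2)$, i.e. it is $\mathcal{O}(c - c_0) = \mathcal{O}(a)$ and purely imaginary; the diagonal elements $\langle W_a, P_{c,\gamma}'(0) W_a \rangle$ and $\langle V_{a,c}, P_{c,\gamma}'(0) V_{a,c} \rangle$ vanish to leading order by parity (odd integrand), consistent with $\lambda_{\pm 1}^{(0)}$ having no linear term in $\kappa$. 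The second-order-in-$\kappa$ diagonal block contributes $\tfrac12 \lambda''(0) \kappa^2$ to each band (this is where $\lambda''(0) > 0$ from Assumption~\ref{assumption-1} enters, identifying $\tfrac12 \lambda''(0) \kappa^2$ as the $a = 0$ curvature of $\lambda_{\pm 1}^{(0)}$).

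Then I would diagonalize the resulting $2 \times 2$ Hermitian matrix
$$
\begin{pmatrix} \tfrac12 \lambda''(0) \kappa^2 + \mathcal{O}(a\kappa^2) + \mathcal{O}(\kappa^3) & -i \mu_1 (c-c_0)\kappa + \mathcal{O}(a^2 \kappa) + \mathcal{O}(\kappa^2) \\ i\mu_1(c-c_0)\kappa + \mathcal{O}(a^2\kappa) + \mathcal{O}(\kappa^2) & \lambda_2(c_0)a^2 + \mathcal{O}(a^3) + \tfrac12\lambda''(0)\kappa^2 + \mathcal{O}(a\kappa^2) + \mathcal{O}(\kappa^3) \end{pmatrix}.
$$
For $\kappa$ small relative to $a$, the off-diagonal entry $\mu_1(c-c_0)\kappa = \mathcal{O}(a\kappa)$ is small compared to the gap $\lambda_2(c_0)a^2$ between the diagonal entries, so standard second-order eigenvalue perturbation gives the lower eigenvalue as $\tfrac12\lambda''(0)\kappa^2 - \mu_1^2(c-c_0)^2\kappa^2 / (\lambda_2(c_0)a^2) + \mathcal{O}(a\kappa^2) + \mathcal{O}(\kappa^3)$ and the upper one as $\lambda_2(c_0)a^2 + \mathcal{O}(a^3) + \tfrac12\lambda''(0)\kappa^2 + \mu_1^2(c-c_0)^2\kappa^2/(\lambda_2(c_0)a^2) + \mathcal{O}(a\kappa^2) + \mathcal{O}(\kappa^3)$, which is exactly (\ref{band-perturbation-lowest})–(\ref{band-perturbation-second}) after relabeling the $\mathcal{O}(a\kappa^2)$ error as $\mathcal{O}(a)\kappa^2$. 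A uniformity check over the whole Brillouin zone $\mathbb{T}$ (rather than just $\kappa$ near $0$) finishes the argument: for $\kappa$ bounded away from $0$, both bands are bounded away from $0$ by continuity from the $a = 0$ picture plus the bounded $a$-perturbation, so the quadratic-in-$\kappa$ description near the origin is all that is needed.

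The main obstacle I anticipate is bookkeeping the error terms so that the resonant denominator $\lambda_2(c_0)a^2$ does not destroy the claimed asymptotics: the ratio $(c-c_0)^2/a^2$ is $\mathcal{O}(1)$ under the hypothesis $|c - c_0| \leq C|a|$, so the correction $\mu_1^2(c-c_0)^2\kappa^2/(\lambda_2(c_0)a^2)$ is genuinely the same order as the leading $\tfrac12\lambda''(0)\kappa^2$ term — this is the whole point of the estimate (it is what allows $\lambda_{\rm gr}$ to remain nonnegative precisely when $c$ stays in the interval $(c_-,c_+)$), but it also means the perturbation is borderline and one must verify that the next correction, which would carry an extra factor $(c-c_0)^2/a^2$ or $a$, is truly higher order. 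Concretely, one must confirm that the $\mathcal{O}(a^2\kappa)$ and $\mathcal{O}(\kappa^2)$ corrections to the off-diagonal element, when fed through the perturbation formula, produce only $\mathcal{O}(a\kappa^2)$ and $\mathcal{O}(\kappa^3)$ contributions and not something of size $\kappa^2$ with an $a$-independent coefficient; this is where the orthogonality conditions on $F_c$ and $\tilde F_c$ in Assumption~\ref{assumption-3} and the parity structure do the essential work.
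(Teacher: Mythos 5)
Your proposal is correct and follows essentially the same route as the paper: the paper likewise reduces to the two nearly degenerate modes $W_a$ and $V_{a,c}$, kills the linear-in-$\kappa$ diagonal terms (via skew-adjointness of $Q_{c,\gamma}$, the operator analogue of your parity argument), and obtains the level-repulsion correction $\mp \mu_1^2 (c-c_0)^2 \kappa^2 / (\lambda_2(c_0) a^2)$ from the coupling $i\mu_1(c-c_0)\kappa$ measured against the gap $\lambda(a,c) \approx \lambda_2(c_0) a^2$, with all other bands controlled by the uniform bound of Assumption \ref{assumption-1}. The only organizational difference is that the paper carries out the reduction as per-band expansions in $\kappa$ justified by the decomposition $w = W_a + b V_{a,c} + \tilde{w}$ with elimination of $b$ and $\tilde{w}$ (working for $|\kappa| \leq C a^2$ and then invoking smoothness in $\kappa$), rather than diagonalizing your explicit $2\times 2$ Hermitian matrix, but this amounts to the same computation and uses the three Assumptions in exactly the same way.
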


\begin{proof}
From (\ref{bands-bounded}) we know that, if $|c - c_0|$ is sufficiently
small, there exists a constant $C > 0$ (independent of $c$) such that
\begin{equation}
\label{resolvent}
0 < \left[ \lambda_n^{(0)}(\kappa) \right]^{-1} \leq C \quad \hbox{for all } n \in \mathbb{Z}
\setminus\{+1,-1\} \hbox{ and all } \kappa \in \mathbb{T}.
\end{equation}
By the regular perturbation theory, this bound remains true (with
possibly a larger constant $C$) for all the perturbed spectral bands
with $n \neq \pm 1$ if
$a$ and $|c-c_0|$ are small enough. For the other two spectral bands in (\ref{bands-convex}),
the same perturbation argument shows that they are
bounded away from zero if $|\kappa| \ge \kappa_0$ for $a$ and $|c-c_0|$
being sufficiently small, where $\kappa_0 > 0$ is fixed independently of $a$. 
It remains to study the two spectral bands near $\kappa = 0$.

To simplify details, we consider each spectral band $\lambda_{\pm}$ separately from each other.
At the first glance, this approach does not look justified because the kernel of $P_{c=c_0,\gamma = 1}(0)$
is two-dimensional. However, due to Assumption \ref{assumption-2},
the double zero eigenvalue is broken into two simple eigenvalues of $P_{c=c_0,\gamma}$
for every $\gamma$ such that $\gamma - 1 = \mathcal{O}(a^2)$ is sufficiently small. Therefore, we can
proceed with the perturbation expansions in $\kappa$, which are singular as $a \to 0$ 
if $\kappa \neq 0$ is fixed independently of $a$. However, if $\kappa$ is as small as $\mathcal{O}(a^2)$, 
all terms of the asymptotic expansions (\ref{band-perturbation-lowest})
and (\ref{band-perturbation-second}) are controlled in the limit $a \to 0$.

First, we derive the expansion (\ref{band-perturbation-lowest}) formally.
Since $P_{c,\gamma}(0) W_a = 0$, we consider the following expansion
for the band $\lambda_{\rm gr}(\kappa)$ that touches the zero eigenvalue:
\begin{equation}
\label{band-expansion}
\lambda_{\rm gr}(\kappa) = \Lambda_1 \kappa + \Lambda_2 \kappa^2 + \mathcal{O}(\kappa^3), \quad
w(z,\kappa) = W_a(z) + \kappa W_1(z) + \kappa^2 W_2(z) + \mathcal{O}_{L^2_{\rm per}}(\kappa^3),
\end{equation}
where corrections $W_{1,2}$ are to be determined by a projection algorithm subject to the orthogonality conditions
$\langle W_a, W_{1,2} \rangle_{L^2_{\rm per}} = 0$.
At the first order in $\kappa$, we obtain the linear inhomogeneous equation
\begin{eqnarray}
P_{c,\gamma}(0) W_1 + P_{c,\gamma}'(0) W_a = \Lambda_1 W_a.
\label{perturbed-matrix-first}
\end{eqnarray}
We note that $P_{c,\gamma}'(0) = i Q_{c,\gamma}$, where $Q_{c,\gamma} := K_{c,\gamma} z - z K_{c,\gamma}$
is skew-adjoint in $L^2_{\rm per}$. Since $W_a$ and $Q_{c,\gamma}$ are real, we have
$$
\Lambda_1 \| W_a \|_{L^2_{\rm per}}^2 = i \langle Q_{c,\gamma} W_a, W_a \rangle_{L^2_{\rm per}}
= - i \langle W_a, Q_{c,\gamma} W_a \rangle_{L^2_{\rm per}} = 0,
$$
so that $\Lambda_1 = 0$.

From the asymptotic expansions (\ref{eigenvalue-positive}) in Assumption \ref{assumption-2}
and (\ref{non-degeneracy}) in Assumption \ref{assumption-3}, we obtain the unique solution
of the inhomogeneous equation (\ref{perturbed-matrix-first}) with $\Lambda_1 = 0$ in the form
\begin{eqnarray}
\nonumber
W_1 & := & -[P_{c,\gamma}(0)]^{-1} P_{c,\gamma}'(0) W_a \\
& = & \frac{i \mu_1 (c-c_0)}{\lambda_2(c) a^2} \left[ 1 + \mathcal{O}(a) \right] \cos(z) + \tilde{W}^{(1)}_{a,c},
\label{first-order-correction}
\end{eqnarray}
where $\tilde{W}^{(1)}_{a,c}$ is bounded in $L^2_{\rm per}$ as $a \to 0$ and $c \to c_0$.
If $|c-c_0| \leq C |a|$ and $|\kappa| \leq C a^2$ for an $a$-independent positive constant $C$,
then $\kappa W_1 = \mathcal{O}_{L^2_{\rm per}}(a)$ is small.

At the second order in $\kappa$, we obtain the linear inhomogeneous equation
\begin{eqnarray}
P_{c,\gamma}(0) W_2 + P_{c,\gamma}'(0) W_1 + \frac{1}{2} P_{c,\gamma}''(0) W_a = \Lambda_2 W_a.
\label{perturbed-matrix-second}
\end{eqnarray}
Projection to $W_a$ now yields
\begin{equation}
\label{zero-part}
\Lambda_2 \| W_a \|_{L^2_{\rm per}}^2 = \langle P_{c,\gamma}'(0) W_1, W_a \rangle_{L^2_{\rm per}} +
\frac{1}{2} \langle P_{c,\gamma}''(0) W_a, W_a \rangle_{L^2_{\rm per}}.
\end{equation}
Assumption \ref{assumption-1} implies that
\begin{equation}
\label{first-part}
\frac{1}{2} \langle P_{c,\gamma}''(0) W_a, W_a \rangle_{L^2_{\rm per}} = \frac{1}{2} \lambda''(0) \| W_a \|_{L^2_{\rm per}}^2
\left[ 1 + \mathcal{O}(a,c-c_0) \right].
\end{equation}
On the other hand, the explicit expressions (\ref{non-degeneracy}) and (\ref{first-order-correction}) imply that
\begin{eqnarray}
\nonumber
\langle P_{c,\gamma}'(0) W_1, W_a \rangle_{L^2_{\rm per}} & = &
\langle W_1, P_{c,\gamma}'(0) W_a \rangle_{L^2_{\rm per}} \\
\label{second-part}
& = & -\frac{\mu_1^2 (c-c_0)^2}{\lambda_2(c_0) a^2} \| V_{a,c} \|_{L^2_{\rm per}}^2
\left[ 1 + \mathcal{O}(a) \right]
+ \mathcal{O}(a,c-c_0),
\end{eqnarray}
where the leading term of the expansion is bounded as $a \to 0$ if $|c-c_0| \leq C |a|$
for an $a$-independent positive constant $C$.

From (\ref{eigenfunction-expansions}), we have
 $\| W_a \|_{L^2_{\rm per}}^2 = \pi + \mathcal{O}(a)$ and $\| V_{a,c} \|_{L^2_{\rm per}}^2 = \pi + \mathcal{O}(a)$.
Combining (\ref{band-expansion}), (\ref{zero-part}), (\ref{first-part}), and (\ref{second-part}),
we obtain (\ref{band-perturbation-lowest}).

Next, we derive the expansion (\ref{band-perturbation-second}) formally.
Since $P_{c,\gamma}(0) V_{a,c} = \lambda(a,c) V_{a,c}$, we consider the following expansion
for the band $\lambda_{\rm ex}(\kappa)$:
\begin{equation}
\label{band-expansion-ex}
\lambda_{\rm ex}(\kappa) = \lambda(a,c) + \Lambda_1 \kappa + \Lambda_2 \kappa^2 + \mathcal{O}(\kappa^3), \quad
w(z,\kappa) = V_{a,c}(z) + \kappa V_1(z) + \kappa^2 V_2(z) + \mathcal{O}_{L^2_{\rm per}}(\kappa^3),
\end{equation}
where corrections $V_{1,2}$ are to be determined by a projection algorithm subject to the orthogonality conditions
$\langle V_{a,c}, V_{1,2} \rangle_{L^2_{\rm per}} = 0$.
At the first order in $\kappa$, we obtain the linear inhomogeneous equation
\begin{eqnarray}
\left[ P_{c,\gamma}(0) - \lambda(a,c) \right] V_1 + P_{c,\gamma}'(0) V_{a,c} = \Lambda_1 V_{a,c}.
\label{perturbed-matrix-first-ex}
\end{eqnarray}
Recalling that $P_{c,\gamma}'(0) = i Q_{c, \gamma}$, projection to $V_{a,c}$ yields
$$
\Lambda_1 \| V_{a,c} \|_{L^2_{\rm per}}^2 = i \langle Q_{c,\gamma} V_{a,c}, V_{a,c} \rangle_{L^2_{\rm per}}
= - i \langle V_{a,c}, Q_{c,\gamma} V_{a,c} \rangle_{L^2_{\rm per}} = 0,
$$
so that $\Lambda_1 = 0$. From the asymptotic expansions (\ref{eigenvalue-positive}) in Assumption \ref{assumption-2}
and (\ref{non-degeneracy-ex}) in Assumption \ref{assumption-3}, we obtain the solution of
the inhomogeneous equation (\ref{perturbed-matrix-first-ex})
with $\Lambda_1 = 0$ in the form
\begin{eqnarray}
\nonumber
V_1 & := & -[P_{c,\gamma}(0) - \lambda(a,c)]^{-1} P_{c,\gamma}'(0) V_{a,c} \\
& = & \frac{i \mu_1 (c-c_0)}{\lambda_2(c) a^2} \left[ 1 + \mathcal{O}(a) \right] \sin(z) + \tilde{V}^{(1)}_{a,c},
\label{first-order-correction-ex}
\end{eqnarray}
where $\tilde{V}^{(1)}_{a,c}$ is bounded in $L^2_{\rm per}$ as $a \to 0$ and $c \to c_0$. Again,
the correction term $\kappa V_1 = \mathcal{O}_{L^2_{\rm per}}(a)$ is small, if $|c-c_0| \leq C |a|$ 
and $|\kappa| \leq C a^2$ for an $a$-independent positive constant $C$.

At the second order in $\kappa$, we obtain the linear inhomogeneous equation
\begin{eqnarray}
\left[ P_{c,\gamma}(0) - \lambda(a,c) \right] V_2 +
P_{c,\gamma}'(0) V_1 + \frac{1}{2} P_{c,\gamma}''(0) V_{a,c} = \Lambda_2 V_{a,c}.
\label{perturbed-matrix-second-ex}
\end{eqnarray}
Projection to $V_{a,c}$ yields
\begin{equation}
\label{zero-part-ex}
\Lambda_2 \| V_{a,c} \|_{L^2_{\rm per}}^2 = \langle P_{c,\gamma}'(0) V_1, V_{a,c} \rangle_{L^2_{\rm per}} +
\frac{1}{2} \langle P_{c,\gamma}''(0) V_{a,c}, V_{a,c} \rangle_{L^2_{\rm per}}.
\end{equation}
Assumption \ref{assumption-1} implies that
\begin{equation}
\label{first-part-ex}
\frac{1}{2} \langle P_{c,\gamma}''(0) V_{a,c}, V_{a,c} \rangle_{L^2_{\rm per}} = \frac{1}{2} \lambda''(0) \| V_{a,c} \|_{L^2_{\rm per}}^2
\left[ 1 + \mathcal{O}(a,c-c_0) \right].
\end{equation}
On the other hand, the explicit expressions (\ref{non-degeneracy-ex}) and (\ref{first-order-correction-ex}) imply that
\begin{eqnarray}
\nonumber
\langle P_{c,\gamma}'(0) V_1, V_{a,c} \rangle_{L^2_{\rm per}} & = &
\langle V_1, P_{c,\gamma}'(0) V_{a,c} \rangle_{L^2_{\rm per}} \\
\label{second-part-ex}
& = & \frac{\mu_1^2 (c-c_0)^2}{\lambda_2(c_0) a^2} \| W_a \|_{L^2_{\rm per}}^2
\left[ 1 + \mathcal{O}(a) \right]
+ \mathcal{O}(a,c-c_0),
\end{eqnarray}
where the leading term of the expansion is bounded as $a \to 0$ if $|c-c_0| \leq C |a|$
for an $a$-independent positive constant $C$.
Combining (\ref{band-expansion-ex}), (\ref{zero-part-ex}), (\ref{first-part-ex}), and (\ref{second-part-ex}),
we obtain (\ref{band-perturbation-second}).

It remains to justify the expansions (\ref{band-perturbation-lowest}) and (\ref{band-perturbation-second}).
Since the method is similar, we only report justification of the expansion (\ref{band-perturbation-lowest}).
Using scaling $\lambda_{\rm gr}(\kappa) = \kappa^2 \Lambda$, we substitute the orthogonal decomposition
\begin{equation}
\label{orth0}
w = W_a(z) + b V_{a,c}(z) + \tilde{w}(z), \quad \langle W_a,\tilde{w} \rangle_{L^2_{\rm per}} = \langle V_{a,c}, \tilde{w} \rangle_{L^2_{\rm per}} = 0,
\end{equation}
to the spectral problem $P_{c,\gamma}(\kappa) w = \kappa^2 \Lambda w$. By projecting the spectral
problem to $W_a$ and $V_{a,c}$ and using the previous relations, we obtain two equations
\begin{eqnarray}
\nonumber
\kappa^2 \Lambda \| W_a \|_{L^2_{\rm per}}^2 & = & \langle \left( \frac{1}{2} \kappa^2 P_{c,\gamma}''(0) + \mathcal{O}(\kappa^3) \right) W_a, W_a \rangle_{L^2_{\rm per}} + b \langle \left( \kappa P_{c,\gamma}'(0) + \mathcal{O}(\kappa^2) \right) V_{a,c}, W_a \rangle_{L^2_{\rm per}} \\
\label{orth1} & \phantom{t} &
+ \langle \left( \kappa P_{c,\gamma}'(0) + \mathcal{O}(\kappa^2) \right) \tilde{w}, W_a \rangle_{L^2_{\rm per}}
\end{eqnarray}
and
\begin{eqnarray}
\nonumber
\kappa^2 \Lambda b \| V_{a,c} \|_{L^2_{\rm per}}^2 & = & \langle \left( \kappa P_{c,\gamma}'(0) + \mathcal{O}(\kappa^2) \right) W_a, V_{a,c} \rangle_{L^2_{\rm per}} + b \langle \left( P_{c,\gamma}(0) + \mathcal{O}(\kappa) \right) V_{a,c}, V_{a,c} \rangle_{L^2_{\rm per}} \\
\label{orth2} & \phantom{t} &
+ \langle \left( P_{c,\gamma}(0) + \mathcal{O}(\kappa) \right) \tilde{w}, V_{a,c} \rangle_{L^2_{\rm per}},
\end{eqnarray}
where all correction terms to the linear operators from $X_{\rm per}$ to $L^2_{\rm per}$ are defined in the operator norm.
The residual problem for $\tilde{w}$ is written in the form
\begin{eqnarray}
\label{orth3}
\left( P_{c,\gamma}(\kappa) - \kappa^2 \Lambda \right) \tilde{w} & = & \kappa^2 \Lambda (W_a + b V_{a,c})
- P_{c,\gamma}(\kappa) ( W_a + b V_{a,c} ).
\end{eqnarray}

Let us now assume that $c$ and $\kappa$ satisfy $|c - c_0| \leq C |a|$ and
$|\kappa| \leq C a^2$, where $C$ is a positive $a$-independent constant.
Thanks to the bound (\ref{resolvent}), the orthogonal decomposition (\ref{orth0}), and the projection equations
(\ref{orth1}) and (\ref{orth2}), for every $\Lambda = \mathcal{O}(1)$ and $b = \mathcal{O}(1)$
as $a \to 0$, we have a unique solution to the linear inhomogeneous equation (\ref{orth3})
for $\tilde{w}$ satisfying the bound
\begin{equation}
\label{orth4}
\| \tilde{w} \|_{L^2_{\rm per}} \leq C \left( (|\kappa| |c-c_0|  + |\kappa| |a| + \kappa^2) (1 + |b|) + |b| a^2 \right),
\end{equation}
where the positive constant $C$ is $a$-independent. With the account of
Assumptions \ref{assumption-1}, \ref{assumption-2}, \ref{assumption-3}, as well as the bound (\ref{orth4}),
we obtain a unique expression for $b$ from the projection equation (\ref{orth2})
for every $|\Lambda| = \mathcal{O}(1)$ as $a \to 0$:
\begin{eqnarray}
\label{orth5}
b \left( \lambda_2(c_0) a^2 + \mathcal{O}(a^3,a^2|c-c_0|,\kappa^2) \right) \| V_{a,c} \|_{L^2_{\rm per}}^2 =
i \mu_1 (c-c_0) \kappa \| V_{a,c} \|_{L^2_{\rm per}}^2 + \mathcal{O}(a^2 \kappa,\kappa^2).
\end{eqnarray}
In view of the constraints on $c$ and $\kappa$, this yields
\begin{eqnarray}
\label{orth6}
b = \frac{i \mu_1 (c-c_0) \kappa}{\lambda_2(c_0) a^2} \left(1 + \mathcal{O}(a) \right),
\end{eqnarray}
so that $b = \mathcal{O}(a)$  as $a \to 0$. Finally, substituting $\tilde{w}$ and $b$ satisfying
(\ref{orth4}) and (\ref{orth6}) to the projection equation (\ref{orth1}) yields the unique
expressions for $\Lambda$:
\begin{eqnarray}
\label{orth7}
\Lambda = \frac{1}{2} \lambda''(0) - \frac{\mu_1^2 (c-c_0)^2}{\lambda_2(c_0) a^2} + \mathcal{O}(a,\kappa),
\end{eqnarray}
from which the expansion (\ref{band-perturbation-lowest}) follows. In particular, we see that 
$$
\lambda_{\rm gr}''(0) = \lambda''(0) - \frac{2 \mu_1^2 (c-c_0)^2}{\lambda_2(c_0) a^2} + \mathcal{O}(a),
$$
where the leading term is $\mathcal{O}(1)$ as $a \to 0$. From smoothness of $\lambda_{\rm gr}$ in $\kappa$ 
for every $a \neq 0$ sufficiently small, the expansion (\ref{band-perturbation-lowest}) is justified 
for every $\kappa \neq 0$ sufficiently small. Justification of the expansion (\ref{band-perturbation-second}) for 
$\lambda_{\rm ex}$ is analogous. 
\end{proof}

\begin{corollary}
It follows from the asymptotic expansions (\ref{band-perturbation-lowest}) and (\ref{band-perturbation-second})
of Proposition \ref{proposition-main} that the self-adjoint operator $K_{c,\gamma}$ is positive 
if $c \in (c_-,c_+)$, where $c_{\pm}$ satisfy the asymptotic expansion
\begin{equation}
c_{\pm} = c_0 + \frac{\sqrt{\lambda_2(c_0) \lambda''(0)}}{\sqrt{2} \mu_1} a + \mathcal{O}(a^2) \quad \mbox{\rm as} \quad a \to 0.
\end{equation}
\end{corollary}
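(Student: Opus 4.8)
The plan is to obtain positivity of $K_{c,\gamma}$ by reading off the signs of the two lowest Floquet--Bloch bands directly from the expansions (\ref{band-perturbation-lowest}) and (\ref{band-perturbation-second}) of Proposition~\ref{proposition-main}, using that all the remaining bands have already been bounded below by a fixed positive constant in its proof. Recall that $K_{c,\gamma}$ is self-adjoint in $L^2(\mathbb{R})$ with $2\pi$-periodic coefficients, so its spectrum is the union over $\kappa \in \mathbb{T}$ of the eigenvalues $\{\lambda_n(\kappa)\}_{n \in \mathbb{Z}}$, while on the space of $2\pi N$-periodic zero-mean functions the spectrum is the corresponding subset with $\kappa \in \frac{1}{N}\mathbb{Z} \cap \mathbb{T}$. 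It therefore suffices to show that every Floquet--Bloch eigenvalue is strictly positive, with the single exception of $\lambda_{\rm gr}(0) = 0$, which is forced by $P_{c,\gamma}(0) W_a = 0$ with $W_a \propto U'$; restricting to $\langle U', v \rangle_{L^2_{\rm per}} = 0$ then removes this one zero and yields a lower bound $\delta^2 \Lambda_{c,\gamma} \geq C \| v \|^2$ uniform in $N$.

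First I would dispose of the easy bands. For $n \neq \pm 1$, the bound (\ref{resolvent}) together with the regular perturbation argument from the proof of Proposition~\ref{proposition-main} keeps $\lambda_n(\kappa) \geq C > 0$ for all $\kappa \in \mathbb{T}$ once $|a|$ and $|c-c_0|$ are small; the same argument shows that both low bands $\lambda_{\rm gr}$ and $\lambda_{\rm ex}$ are bounded away from zero for $|\kappa| \geq \kappa_0$, so only $|\kappa| \leq \kappa_0$ needs inspection, which is exactly the range where the expansions apply. For $\lambda_{\rm ex}$, (\ref{band-perturbation-second}) provides a leading value $\lambda_2(c_0) a^2 + \mathcal{O}(a^3) > 0$ and a $\kappa^2$-coefficient $\frac{1}{2}\lambda''(0) + \mu_1^2 (c-c_0)^2/(\lambda_2(c_0) a^2) + \mathcal{O}(a) > 0$, so $\lambda_{\rm ex}(\kappa) > 0$ for all $|\kappa| \leq \kappa_0$, with no constraint on $c$ beyond $|c-c_0| \leq C|a|$.

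The decisive band is $\lambda_{\rm gr}$. From (\ref{band-perturbation-lowest}) we have $\lambda_{\rm gr}(0) = 0$, and for $0 < |\kappa| \leq \kappa_0$ the sign of $\lambda_{\rm gr}(\kappa)$ is the sign of the coefficient
\[
\frac{1}{2}\lambda_{\rm gr}''(0) = \frac{1}{2}\lambda''(0) - \frac{\mu_1^2 (c-c_0)^2}{\lambda_2(c_0) a^2} + \mathcal{O}(a).
\]
This coefficient is strictly positive precisely when $\mu_1^2 (c-c_0)^2 < \frac{1}{2}\lambda_2(c_0)\lambda''(0)\, a^2 (1 + \mathcal{O}(a))$, that is, when $|c-c_0| < \frac{\sqrt{\lambda_2(c_0)\lambda''(0)}}{\sqrt{2}\,|\mu_1|}\,|a|\,(1+\mathcal{O}(a))$. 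Solving the boundary equation for its two roots gives $c_{\pm} = c_0 \pm \frac{\sqrt{\lambda_2(c_0)\lambda''(0)}}{\sqrt{2}\,\mu_1}\, a + \mathcal{O}(a^2)$, and since $c_\pm - c_0 = \mathcal{O}(a)$ the interval $(c_-,c_+)$ lies inside the admissible window $|c-c_0| \leq C|a|$ of Proposition~\ref{proposition-main} for a suitable $a$-independent $C$. Hence for $c \in (c_-,c_+)$ one gets $\lambda_{\rm gr}(\kappa) > 0$ for all $0 < |\kappa| \leq \kappa_0$, and combined with the bounds of the previous paragraph this shows $K_{c,\gamma} \geq 0$ with one-dimensional kernel $\mathrm{span}\{U'\}$, i.e.\ positivity in the asserted sense. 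The endpoints $c = c_\pm$ are excluded because there $\lambda_{\rm gr}''(0)$ vanishes to leading order and positivity of the band would require resolving the next term in $a$.

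I expect the only genuinely delicate point to be uniformity, and this has essentially been handled already inside Proposition~\ref{proposition-main}: one must ensure that the $\mathcal{O}(a)$ error in the $\kappa^2$-coefficient and the $\kappa^3 \tilde{\lambda}_{\rm gr}$ remainder cannot drive $\lambda_{\rm gr}$ back below zero at intermediate $\kappa$, between $|\kappa| = \mathcal{O}(a^2)$ and $|\kappa| = \kappa_0$. This follows from smoothness of $\lambda_{\rm gr}$ in $\kappa$ for each fixed small $a \neq 0$, the explicit sign of the leading $\kappa^2$-coefficient, and the matching with the $|\kappa| \geq \kappa_0$ lower bound, exactly as in the justification step at the end of the proof of Proposition~\ref{proposition-main}; so the corollary introduces no new ingredient beyond carefully collecting these facts.
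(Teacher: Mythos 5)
Your proposal is correct and follows essentially the same route as the paper: the paper states the corollary as an immediate consequence of Proposition \ref{proposition-main}, obtaining $c_{\pm}$ precisely by requiring the $\kappa^2$-coefficient $\frac{1}{2}\lambda''(0) - \mu_1^2 (c-c_0)^2/(\lambda_2(c_0) a^2) + \mathcal{O}(a)$ of $\lambda_{\rm gr}$ to be positive, while the remaining bands (including $\lambda_{\rm ex}$ and the bands with $n \neq \pm 1$, as well as $|\kappa| \geq \kappa_0$) are bounded away from zero exactly as in the proposition's proof, which is also how the explicit thresholds $|2c-1| < \sqrt{3}|a|$ and $|c-2| < 2|a|$ are derived in Sections 4.2 and 4.3. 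Your sign bookkeeping and the resulting formula for $c_{\pm}$ agree with the paper's applications, so no gap to report.
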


\begin{remark}
Note that the proof of Proposition \ref{proposition-main} does not rely on 
the Lyapunov--Schmidt method applied to the two-dimensional kernel of $P_c^{(0)}(0)$
spanned by $\{ \cos(\cdot),\sin(\cdot) \} \subset L^2_{\rm per}$, compared to the one used in the recent work \cite{GP1}.
Although the same two-dimensional Lyapunov--Schmidt method applies relatively easy for the case of cubic nonlinearities,
where the terms of the order $\mathcal{O}(a^2)$ are missing in the Stokes expansion (\ref{Stokes-mod}),
the method becomes messy and requires an additional near-identity transformation for
the case of quadratic nonlinearities, which is associated with the Stokes expansion (\ref{Stokes}).
\end{remark}

\subsection{Proof of Theorem \ref{theorem-red-Ost}}

Here we report computations, which verify Assumptions \ref{assumption-1}, \ref{assumption-2}, and \ref{assumption-3}
for the case of the reduced Ostrovsky equation (\ref{redOst}). Let $U$ be the $2\pi$-periodic
smooth solutions of the differential equation (\ref{second-order}) given by Lemma \ref{lemma-small-amplitude}.
The Stokes expansion (\ref{Stokes-again}) is satisfied with
$$
\tilde{U}_a(z) = \frac{1}{3} \cos(2z) + \frac{3}{16} a \cos(3z) + \mathcal{O}_{C^{\infty}_{\rm per}}(a^2),
\quad \tilde{\gamma}_a = \frac{1}{6} + \mathcal{O}(a^2).
$$
The second variation of the Lyapunov functional $\Lambda_{c,\gamma}$ defined by (\ref{Lyapunov}) is given by
\begin{equation}
\label{Lyapunov-second-variation}
\delta^2 \Lambda_{c,\gamma} = \int \left[ (\partial_z^{-1} v)^2 - (\gamma - U) v^2 - \frac{c}{(\gamma^3 - 6I)^{2/3}} v^2 +
\frac{c (\gamma - U)^5}{(\gamma^3 - 6I)^{5/3}} (\partial_z^2 v)^2 \right] dz,
\end{equation}
where we have used (\ref{identity}), (\ref{delta-2-S}), and (\ref{delta-2-R}).
The second variation is generated by the self-adjoint operator $K_{c,\gamma}$ with the domain
$X_{\rm per, zero} = H^4_{\rm per,zero} \subset L^2_{\rm per,zero}$,
where $K_{c,\gamma}$ is given explicitly by
\begin{equation}
K_{c,\gamma} := -\partial_z^{-2} - (\gamma - U) - \frac{c}{(\gamma^3 - 6I)^{2/3}} +
\frac{c}{(\gamma^3 - 6I)^{5/3}} \partial_z^2 (\gamma - U)^5 \partial_z^2.
\end{equation}
From here, we define $P_{c,\gamma}(\kappa) = e^{-i \kappa z} K_{c,\gamma} e^{i \kappa z}$, or explicitly,
\begin{equation}
\label{unperturbed-operator-perturbed}
P_{c,\gamma}(\kappa) = -(\partial_z + i \kappa)^{-2}
- (\gamma - U) - \frac{c}{(\gamma^3 - 6I)^{2/3}} +
\frac{c}{(\gamma^3 - 6I)^{5/3}} (\partial_z+i\kappa)^2 (\gamma - U)^5 (\partial_z+i \kappa)^2.
\end{equation}
By using expansion (\ref{P-c-expansion}), we obtain the unperturbed operator
\begin{equation}
\label{unperturbed-operator}
P_c^{(0)}(\kappa) := -(\partial_z + i \kappa)^{-2} - 1 - c + c (\partial_z+i\kappa)^4,
\end{equation}
which corresponds to the quadratic form (\ref{Lyapunov-zero}) modified with the Floquet--Bloch parameter
$\kappa \in \mathbb{T}$.
Assumption \ref{assumption-1} is verified by Lemma \ref{lemma-positivity} with $c_0 = \frac{1}{2}$.
We obtain from (\ref{factorization}) with $\kappa := k \mp 1$,
$$
\lambda_{\pm 1}^{(0)}(\kappa) = \frac{(2 \pm \kappa)^2 (3 \pm 2 \kappa + \kappa^2)}{2 (1 \pm \kappa)^2} \kappa^2,
$$
from which we have $\lambda''(0) = 12$.

Assumption \ref{assumption-2} is verified by Lemmas \ref{lemma-spectrum-S} and \ref{lemma-spectrum-R}.
Since the correction term $V_1$ is the same in the asymptotic expansions (\ref{asympt-V-1}) and (\ref{asympt-V-1-R}),
see expressions (\ref{lambda-1-solution}) and (\ref{lambda-1-solution-M}), we obtain $\lambda_2(c)$
by the linear superposition of (\ref{lambda-2-S}) and (\ref{lambda-2-R}):
$$
\lambda_2(c) = \frac{1+10c}{3}.
$$
In particular, we have $\lambda_2(c_0) = 2$. The expansions (\ref{eigenfunction-expansions}) hold with
$$
\tilde{W}_a(z) = \frac{2}{3} \sin(2z) + \mathcal{O}_{L^2_{\rm per}}(a), \quad
\tilde{V}_{a,c}(z) = \frac{2}{3} \cos(2z) + \mathcal{O}_{L^2_{\rm per}}(a).
$$

In order to verify Assumption \ref{assumption-3}, we differentiate (\ref{unperturbed-operator-perturbed}) in $\kappa$ and
obtain
\begin{equation}
\label{derivative-unperturbed-operator}
P_{c,\gamma}'(0) = 2i \partial_z^{-3} + \frac{2 i c}{(\gamma^3-6I)^{5/3}} \left[ \partial_z (\gamma - U)^5 \partial_z^2
+ \partial_z^2 (\gamma - U)^5 \partial_z \right].
\end{equation}
Applying $P_{c,\gamma = 1}'(0)$ to $W_a$ and $V_{a,c}$ given by the expansions (\ref{eigenfunction-expansions}),
we obtain expansions (\ref{non-degeneracy}) and (\ref{non-degeneracy-ex}) with $\mu_1 = 4$ and
$$
F_c(z) = \frac{i}{6} (1 + 52c) \cos(2z), \quad \tilde{F}_c(z) = -\frac{i}{6} (1 + 52c) \sin(2z).
$$

Combining all together, we apply the result of Proposition \ref{proposition-main} and
obtain the expansion (\ref{band-perturbation-lowest}) for the spectral band $\lambda_{\rm gr}$ in the explicit form
\begin{equation}
\label{band-perturbation-lowest-red-Ost}
\lambda_{\rm gr}(\kappa) = \left( 6 - \frac{2 (2c-1)^2}{a^2} + \mathcal{O}(a) \right) \kappa^2
+ \mathcal{O}(\kappa^3) \quad \mbox{\rm as} \quad \kappa \to 0, \quad a \to 0.
\end{equation}
Therefore, the positivity is proved for $(2c-1)^2 < 3 a^2 + \mathcal{O}(a^3)$ or
$|2c-1| < \sqrt{3} |a| + \mathcal{O}(a^3)$, which proves the expansion (\ref{exact-c-plus-minus})
in Theorem \ref{theorem-red-Ost}. The other spectral band $\lambda_{\rm ex}$ with the expansion
(\ref{band-perturbation-second}) is strictly positive if $a \neq 0$ is sufficiently small.

\subsection{Proof of Theorem \ref{theorem-red-Mod-Ost}}

Here we report computations, which verify Assumptions \ref{assumption-1}, \ref{assumption-2}, and \ref{assumption-3}
for the case of the reduced modified Ostrovsky equation (\ref{redModOst}).
Let $U$ be the $2\pi$-periodic smooth solution of the differential equation (\ref{second-order-mod}) given by
Lemma \ref{lemma-small-amplitude-mod}. The Stokes expansion (\ref{Stokes-again}) is satisfied with
$$
\tilde{U}_a(z) = \frac{3}{64} a \cos(3z) + \mathcal{O}_{C^{\infty}_{\rm per}}(a^3), \quad
\tilde{\gamma}_a = \frac{1}{8} + \mathcal{O}(a^2).
$$
The second variation of the Lyapunov functional $\Lambda_{c,\gamma}$ defined by (\ref{Lyapunov}) is given by
\begin{equation}
\label{Lyapunov-second-variation-mod}
\delta^2 \Lambda_{c,\gamma} = \int \left[ (\partial_z^{-1} v)^2 - \left( \gamma - \frac{1}{2} U^2 \right) v^2
- \frac{c}{2(\gamma^2 - 2I)^{1/2}} v^2 +
\frac{c}{2 (1 - (U')^2)^{3/2}} (\partial_z v)^2 \right] dz,
\end{equation}
where we have used (\ref{delta-2-S-mod}) and (\ref{delta-2-R-mod}).
The second variation is generated by the self-adjoint operator $K_{c,\gamma}$ with the domain
$X_{\rm per. zero} = H^2_{\rm per,zero}\subset L^2_{\rm per,zero}$,
where $K_{c,\gamma}$ is given explicitly by
\begin{equation}
K_{c,\gamma} := -\partial_z^{-2} - \left(\gamma - \frac{1}{2}U^2 \right) - \frac{c}{2 (\gamma^2 - 2I)^{1/2}} -
\frac{c}{2} \partial_z (1 - (U')^2)^{-3/2} \partial_z.
\label{K_mod}
\end{equation}
From here, we define $P_{c,\gamma}(\kappa) = e^{-i \kappa z} K_{c,\gamma} e^{i \kappa z}$, or explicitly,
\begin{equation}
\label{unperturbed-operator-perturbed-modified}
P_{c,\gamma}(\kappa) = -(\partial_z + i \kappa)^{-2} - \left(\gamma - \frac{1}{2}U^2 \right) - \frac{c}{2 (\gamma^2 - 2I)^{1/2}} -
\frac{c}{2} (\partial_z + i \kappa) (1 - (U')^2)^{-3/2} (\partial_z + i \kappa).
\end{equation}
By using expansion (\ref{P-c-expansion}), we obtain the unperturbed operator
\begin{equation}
\label{unperturbed-operator-mod}
P_c^{(0)}(\kappa) := -(\partial_z + i \kappa)^{-2} - 1 - \frac{c}{2} - \frac{c}{2} (\partial_z+i\kappa)^2,
\end{equation}
which corresponds to the quadratic form (\ref{Lyapunov-zero-mod}) modified with the Floquet--Bloch parameter
$\kappa \in \mathbb{T}$.
Assumption \ref{assumption-1} is verified by Lemma \ref{lemma-positivity} with $c_0 = 2$.
We obtain from (\ref{factorization-mod}) with $\kappa := k \mp 1$,
$$
\lambda_{\pm 1}^{(0)}(\kappa) = \frac{(2 \pm \kappa)^2}{(1 \pm \kappa)^2} \kappa^2,
$$
from which we have $\lambda''(0) = 8$.

Assumption \ref{assumption-2} is verified by Lemmas \ref{lemma-spectrum-S-mod} and \ref{lemma-spectrum-R-mod}.
With the linear superposition of (\ref{lambda-2-S-mod}) and (\ref{lambda-2-R-mod}), we obtain
$$
\lambda_2(c) = \frac{2+3c}{8}.
$$
In particular, we have $\lambda_2(c_0) = 1$. The expansions (\ref{eigenfunction-expansions})
hold with $\tilde{W}_a = \mathcal{O}_{L^2_{\rm per}}(a)$, and $\tilde{V}_{a,c} = \mathcal{O}_{L^2_{\rm per}}(a)$ as $a \to 0$.

Assumption \ref{assumption-3} is verified by using the derivative operator
\begin{equation}
\label{derivative-unperturbed-operator-mod}
P_{c,\gamma}'(0) = 2i \partial_z^{-3} - \frac{i c}{2} \left[ (1 - (U')^2)^{-3/2} \partial_z
+ \partial_z (1 - (U')^2)^{-3/2} \right].
\end{equation}
Applying $P_{c,\gamma}'(0)$ to $W_a$ and $V_{a,c}$ given by the expansions (\ref{eigenfunction-expansions}),
we obtain expansions (\ref{non-degeneracy}) and (\ref{non-degeneracy-ex})
with $\mu_1 = 1$, $F_c \equiv 0$,  and $\tilde{F}_c \equiv 0$.

Combining all together, we apply the result of Proposition \ref{proposition-main} and
obtain the expansion (\ref{band-perturbation-lowest}) for the spectral band $\lambda_{\rm gr}$
in the explicit form
\begin{equation}
\label{band-perturbation-lowest-red-Ost-mod}
\lambda_{\rm gr}(\kappa) = \left( 4 - \frac{(c-2)^2}{a^2} + \mathcal{O}(a) \right) \kappa^2
+ \mathcal{O}(\kappa^3) \quad \mbox{\rm as} \quad \kappa \to 0, \quad a \to 0.
\end{equation}
Therefore, the positivity is proved for $(c-2)^2 < 4 a^2 + \mathcal{O}(a^3)$ or
$|c-2| < 2 |a| + \mathcal{O}(a^3)$, which proves the expansion (\ref{exact-c-plus-minus-mod})
in Theorem \ref{theorem-red-Mod-Ost}. The other spectral band $\lambda_{\rm ex}$ with
the expansion (\ref{band-perturbation-second}) is strictly positive if $a \neq 0$ is sufficiently small. 

\section{Numerical results for periodic waves of large amplitudes}

Here we approximate the periodic waves and the spectral bands of the linear operators $K_{\gamma,c}$
in $L^2(\mathbb{R})$ numerically for both versions of the reduced Ostrovsky equations.

Although explicit solutions to the second-order equation (\ref{second-order})
are known in the parametric form (\ref{wave-transformation}), it is more
convenient to construct the solution profile $U$ numerically. We use the Fourier series
and follow Newton-Kantorovich iterations \cite{Boyd01}. Denote the $m$-th iterate for
the solution ($U$, $\gamma$) by ($U_m$, $\gamma_m$) and introduce the increments
($v$, $g$), through $U_{m+1}=U_m+v$, $\gamma_{m+1}=\gamma_m+g$. Then,
at the linearized approximation, $(v,g)$ satisfy the linear equation
\begin{equation}
 (\gamma_m-U_m)v''-2U_m'v'+(1-U_m'')v +U_m''g = -[(\gamma_m-U_m)U_m''-(U_m')^2 + U_m].
\end{equation}
This equation is solved pseudo-spectrally. The even solution $U$ is approximated
by the $N$-th partial sum of the Fourier cosine series:
\begin{equation}
\label{partial-sum}
U(z) = \sum_{n = 1}^N A_n \cos(nz).
\end{equation}
We normalize $A_1 = a$ for a given $a \in \mathbb{R}$ and select $g$ from the orthogonality
condition 
\begin{equation}
\label{condition-on-gamma}
\langle \cos(\cdot), v \rangle_{L^2_{\rm per}} = 0. 
\end{equation}
Then, the value $A_1 = a$
is preserved in iterations in $m$. The partial sum (\ref{partial-sum})
is evaluated at $2 N$ evenly spaced points on $[-\pi,\pi)$ to give the vectors $\bmz$ and $\bmU_m$ and the matrix equation
\begin{multline}
\left(
\begin{array}{cc}
{\rm diag}(\gamma_m-\bmU_m)\,D^2 -2 (D\,\bmU_m)\,D + (\II-D^2\bmU_m) & D^2\bmU_m \\
\cos\,\bmz & 0
\end{array}
\right)
\binom{\bmv}{g} \\
= \binom{{\rm diag}(\gamma_m-\bmU_m)D^2 \bmU_m-(D\,\bmU_m)^2 + \bmU_m} {0}, \label{numerical-system}
\end{multline}
where $D$ is the $2 N \times 2 N$ Fourier differentiation matrix, $\II$ the $2 N \times 2 N$ identity matrix,
{\rm diag} denotes the $2 N \times 2 N$ matrix with the specified diagonal, and $\bmv$ gives the increment
at the points $\bmz$. The final row represents the orthogonality condition (\ref{condition-on-gamma}) numerically.

The linear algebraic system (\ref{numerical-system}) is solved iteratively with $N =2048$
until the maximum absolute value of each increment is less than $10^{-15}$,
which always required fewer than $10$ iterations. Aliasing in the computation
of products was avoided by ensuring that the coefficients satisfied $|A_n|<10^{-14}$
for $n>N/4$. The invariant \eqref{first-order} was found at each $z$ to deviate
from its average value by less than $10^{-11}$, providing an independent check on
the accuracy of the solutions. Figure \ref{f:RO} gives typical profiles $U(z)$ and the
corresponding coefficients $A_n$. Even for the largest amplitude of $a=-0.65$
considered here the coefficients $A_n$ are less than $10^{-16}$ for $n>300$.

  \begin{figure}
    \includegraphics[width=0.46\textwidth]{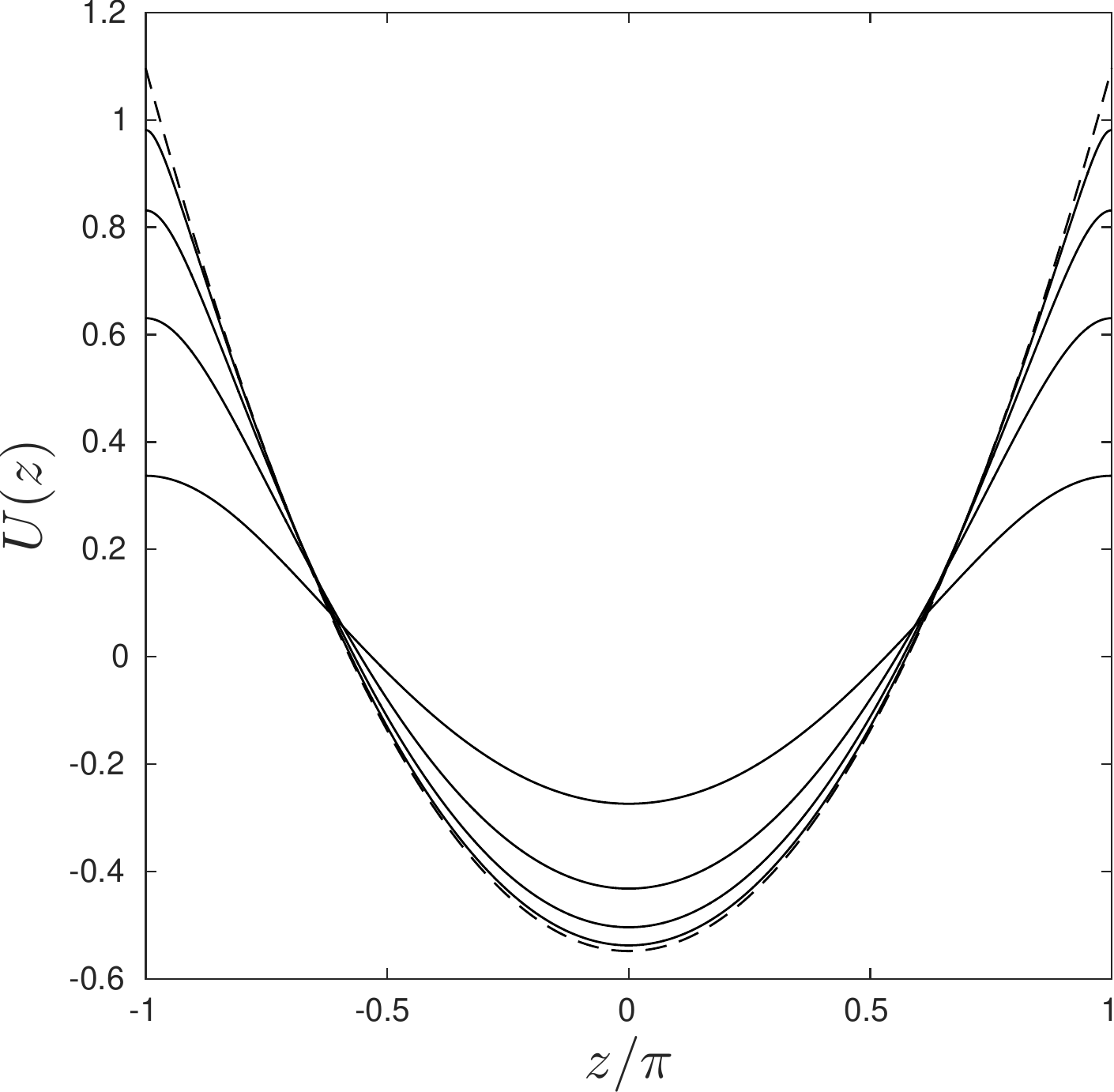}
    \hfill
    \includegraphics[width=0.46\textwidth]{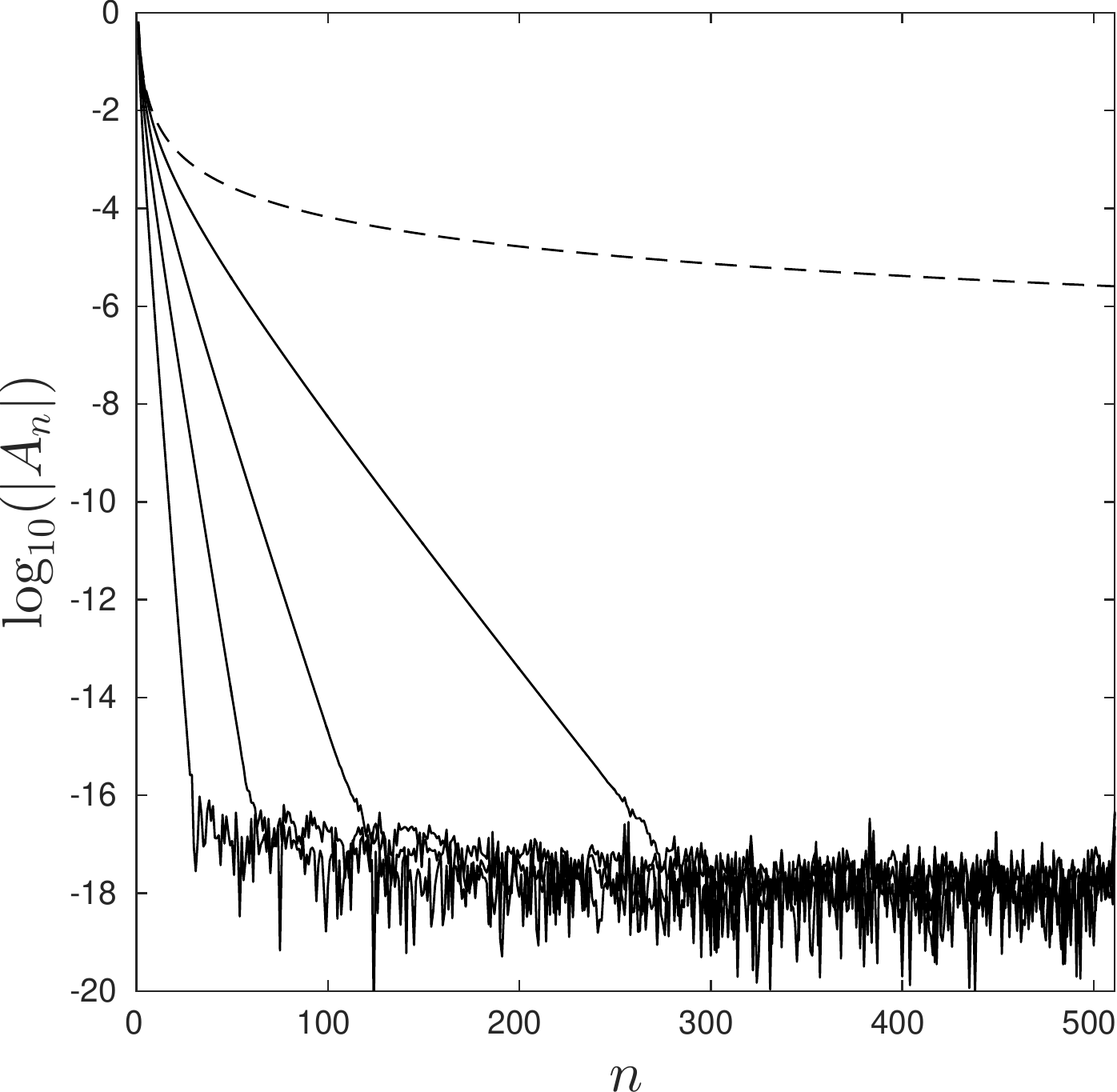}\\
    \null\hfill(a)\hspace{0.5\textwidth}(b)\hfill\null
    \caption{(a) The $2\pi$-periodic solutions of the reduced Ostrovsky equation (\ref{second-order}) for $a = -0.3, -0.5, -0.6, -0.65$.
    (b) The logarithm of the absolute value of the Fourier cosine coefficients, $A_n$, of the trigonometric approximation (\ref{partial-sum}).
    The limiting piecewise parabolic wave, corresponding to $a=-\tfrac{2}{3}$ is shown dashed in (a) with the corresponding coefficients $A_n=2(-1)^n/3n^2$ included in (b). \label{f:RO} }
  \end{figure}

To discuss the eigenvalues of the operator $\PP(\kappa)$ given by \eqref{unperturbed-operator-perturbed}, it is convenient to write
\begin{equation}
  \PP(\kappa) = \AP(\kappa) - c \BP(\kappa),
  \label{Pnum}
\end{equation}
where
\begin{eqnarray*}
   \AP(\kappa) &=&  -(\partial_z + i \kappa)^{-2} - (\gamma - U), \\
   \BP(\kappa) &=&  (\gamma^3 - 6I)^{-2/3} - (\gamma^3 - 6I)^{-5/3} (\partial_z+i\kappa)^2 (\gamma - U)^5 (\partial_z+i \kappa)^2.
\end{eqnarray*}
By discretising the linear operators in Fourier space and evaluating products pseudospectrally, we obtain the discretised forms
\begin{eqnarray*}
   \widehat{\AP}(\kappa) &=&  {\rm diag}(\bmkk^2)-\FF({\rm diag}(\gamma-\bmU)\FF^{-1}(\II)), \\
   \widehat{\BP}(\kappa) &=&  (\gamma^3 - 6I)^{-2/3}\II - (\gamma^3 - 6I)^{-5/3}{\rm diag}(\bmk^2)\FF({\rm diag}(\gamma - \bmU)^5 \FF^{-1}({\rm diag}(\bmk^2)),
\end{eqnarray*}
where $\FF$ and $\FF^{-1}$ denote the discrete Fourier transform and its inverse, $\bmk$ is the wavenumber vector with components $\kappa\pm n$ and $\bmkk$ its component-wise inverse. Eigenvalues were obtained from the discretized form of the operators using the Matlab subroutines {\tt eig} and {\tt eigs}.

\begin{figure}[!htbp]
    \includegraphics[width=0.48\textwidth]{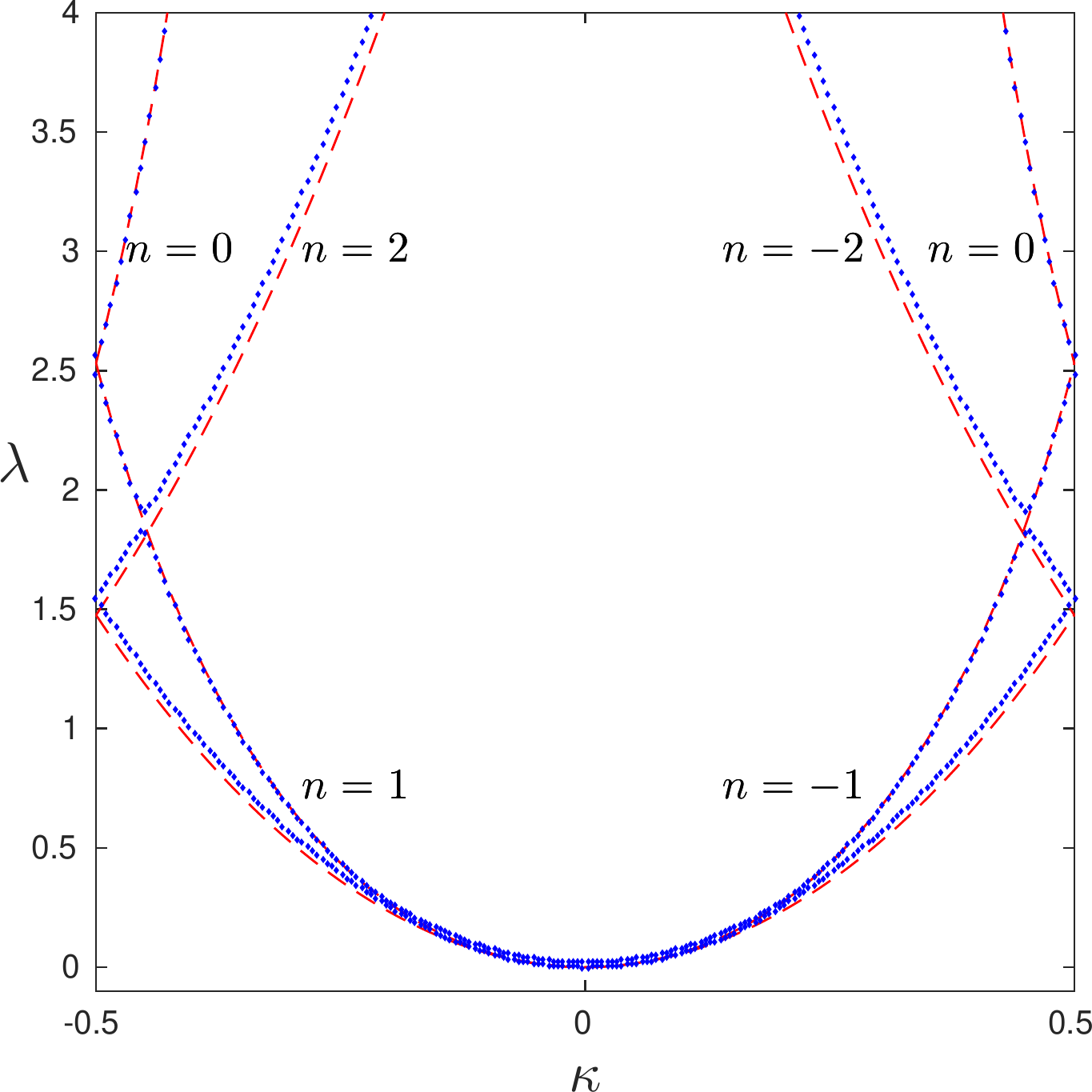} \hfill
    \includegraphics[width=0.48\textwidth]{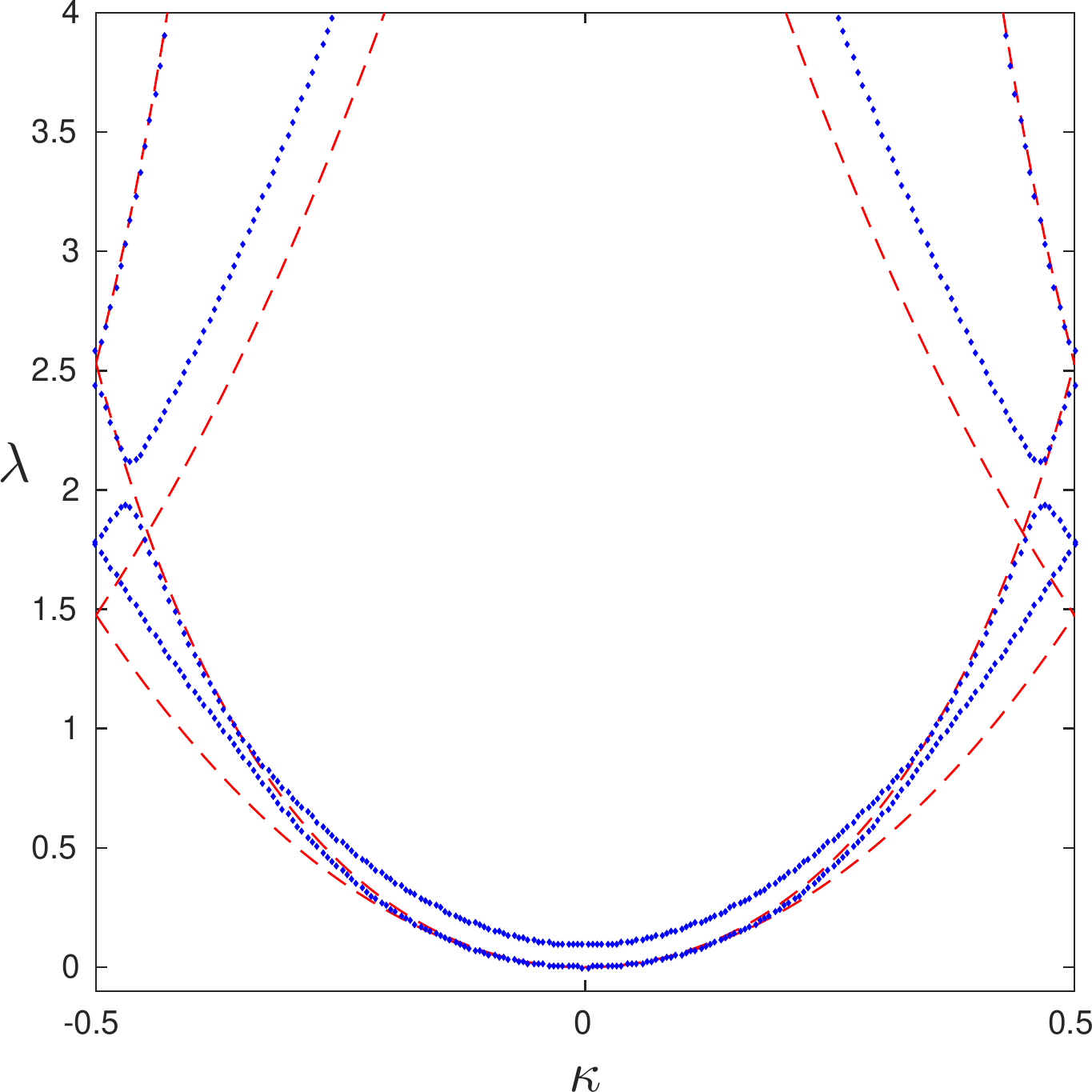} \\
        \null\hfill(a)\hspace{0.5\textwidth}(b)\hfill\null
    \caption{The lowest eigenvalues of the operator \eqref{Pnum} as a function of $\kappa$ when $c=0.5$
    for (a) $a=-0.1$ and (b) $a=-0.2$. The dashed lines (red online) give the lowest eigenvalues 
    of the unperturbed operator for $a = 0$ and the computed eigenvalues are shown as diamonds (blue online).
    All repeated eigenvalues for $a = 0$ are split as $a \neq 0$. \label{f:a0bands} }
\end{figure}

\begin{figure}[!htbp]
    \includegraphics[width=0.31\textwidth]{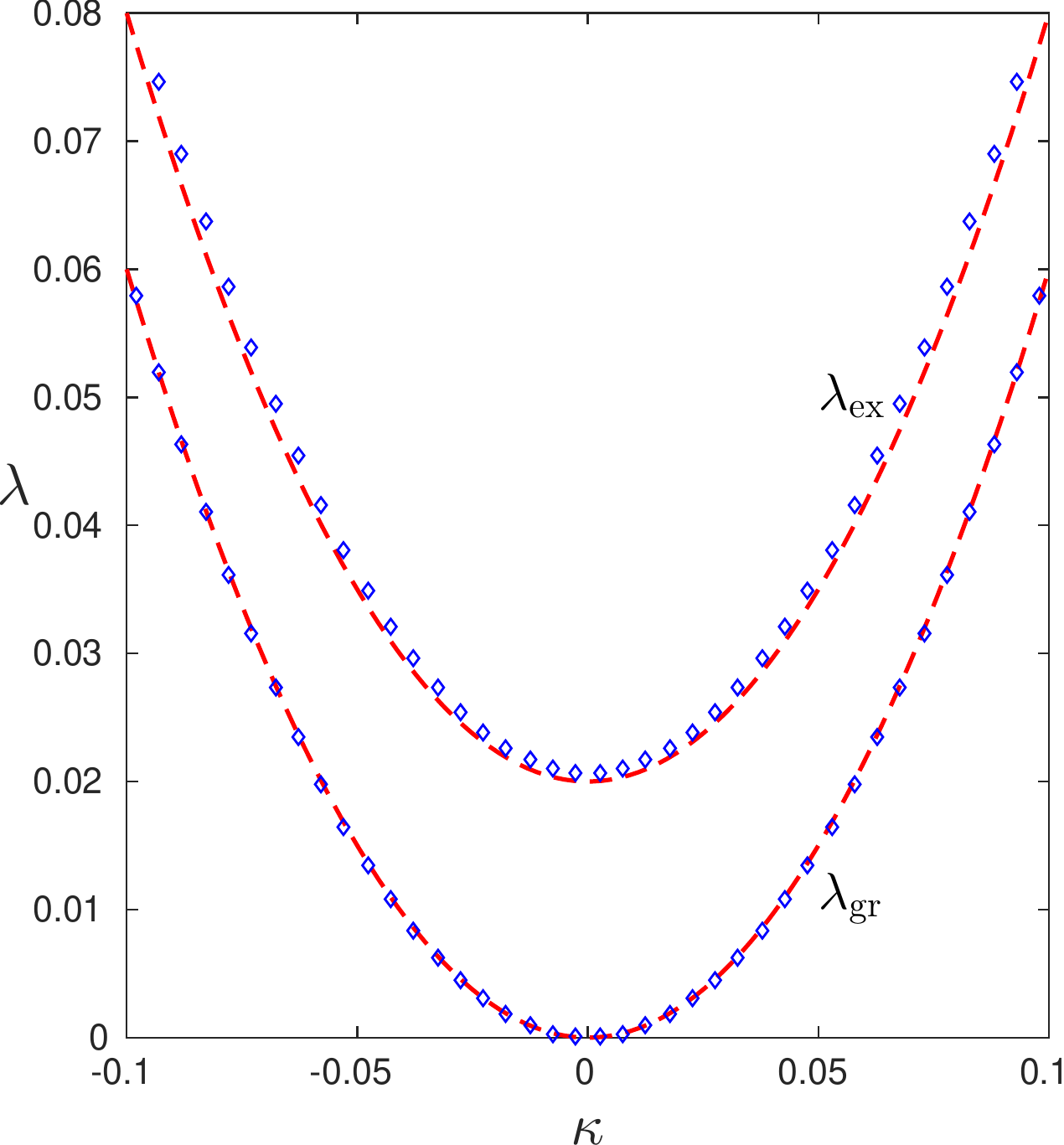} \hfill
   \includegraphics[width=0.32\textwidth]{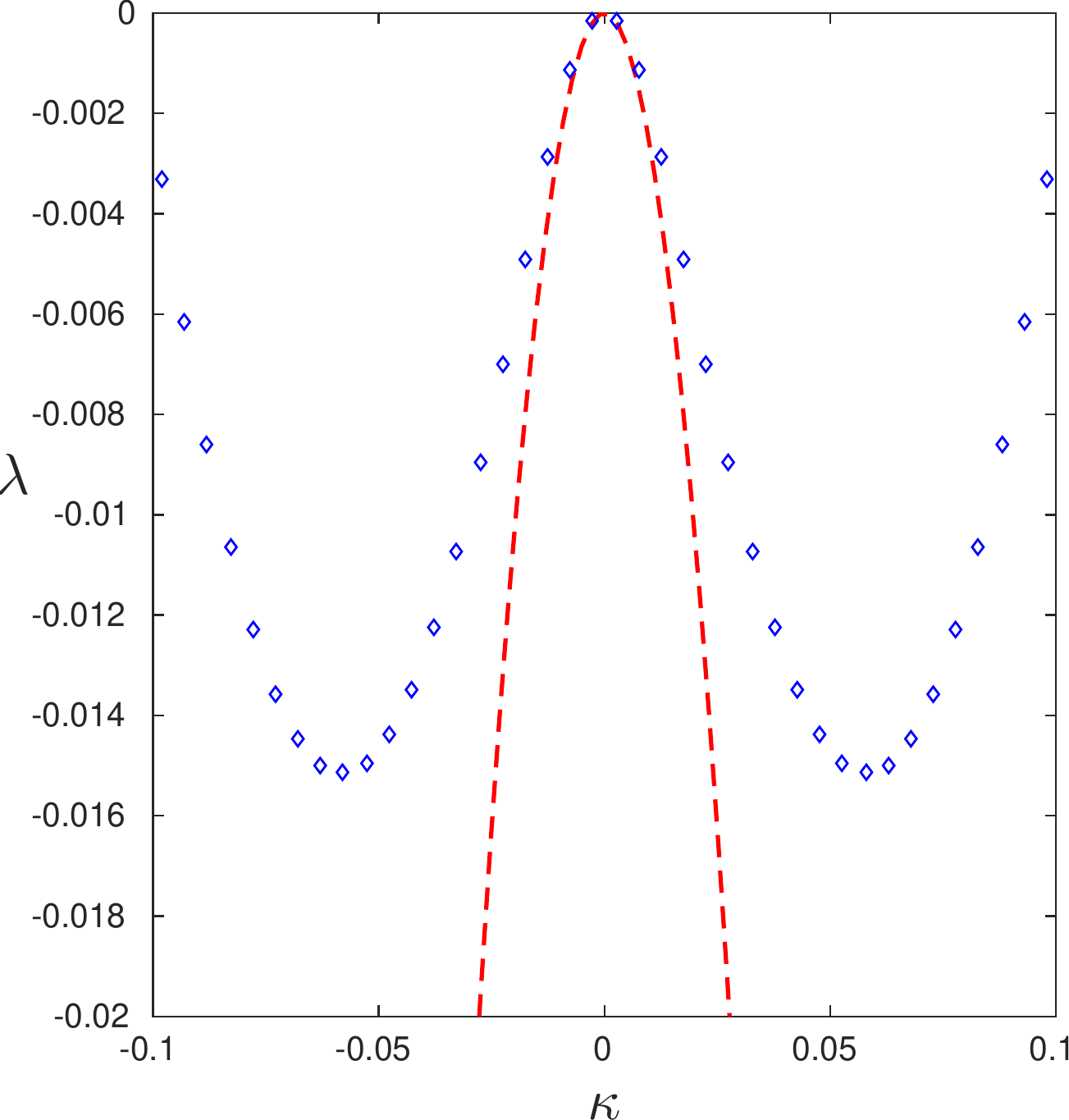} \hfill
    \includegraphics[width=0.31\textwidth]{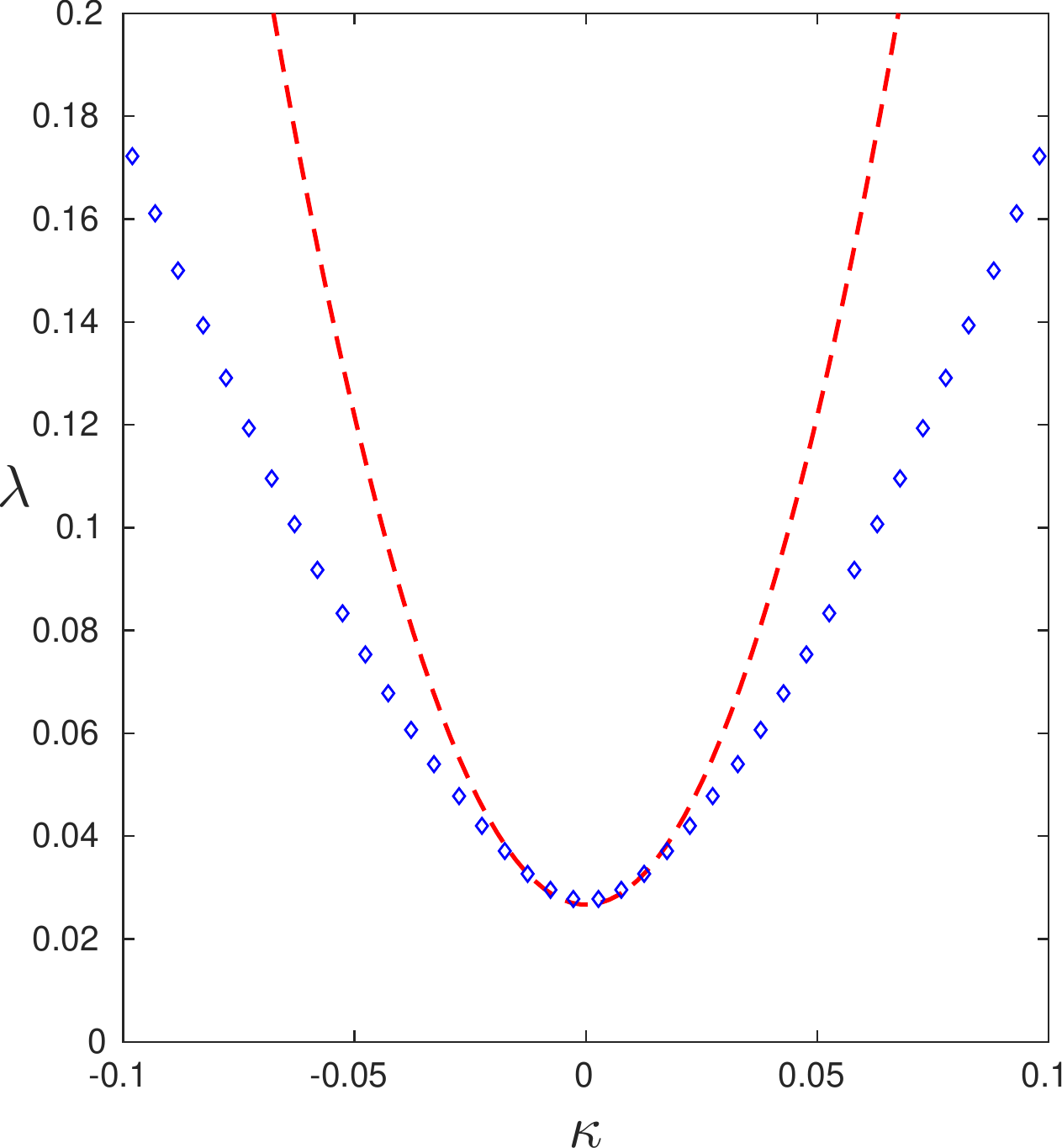} \\
        \null\hfill(a)\hspace{0.31\textwidth}(b)\hspace{0.31\textwidth}(c)\hfill\null
    \caption{(a) A detail of figure \ref{f:a0bands}(a) (for $c=0.5$ and $a=-0.1$) in the neighbourhood
    of the origin showing the splitting of the two spectral bands at finite $a$.
    (b) The ground spectral band for $a=-0.1$ but for $c=0.7$. (c) The first excited spectral band
    for $a=-0.1$ and $c=0.7$. The dashed lines here give the small-$\kappa$, small-$a$ asymptotic
     expansions \eqref{band-perturbation-lowest} and \eqref{band-perturbation-second} for the two 
     spectral bands. \label{f:ap1bands}}
\end{figure}

Figure \ref{f:a0bands} compares the computed lowest eigenvalues of the operator (\ref{Pnum}) for amplitudes $a=-0.1$ and $a=-0.2$
with the spectral bands for the unperturbed operator for $a = 0$ at $c = c_0 = 0.5$. At finite amplitudes ($a\neq0$), all
repeated eigenvalues of the unperturbed operator are split, including the repeated eigenvalue at the origin. Figure \ref{f:ap1bands}(a) is
a detail of figure \ref{f:a0bands}(a) in the neighbourhood of the origin, with the dashed lines now giving the
small-$\kappa$, small-$a$ asymptotic expansions \eqref{band-perturbation-lowest} and \eqref{band-perturbation-second}
for the ground and first excited spectral bands with $\lambda''(0) = 12$, $\mu_1 = 4$, and $\lambda_2(c_0) = 2$.
As predicted, the excited spectral band moves symmetrically upwards into positive $\lambda$, with the asymptotic solution
remaining accurate even at significant base-wave amplitudes. Figure \ref{f:ap1bands}(b),(c) gives the ground and excited
spectral bands for $a = -0.1$ and $c=0.7$. In line with the expansion \eqref{band-perturbation-lowest-red-Ost},
the value of $c$ is now sufficiently large that the lowest mode curve is concave downwards in the neighbourhood of the origin,
mirrored by the appearance of computed eigenvalues with $\lambda$ negative. The accuracy of the asymptotic forms for $a=-0.1$ is remarkable.

Computations for all allowable $0 < a < 0.65$ and $0<c<0.7$ show that the small $\kappa$ behaviour of
the expansion \eqref{band-perturbation-lowest-red-Ost} shown on figure \ref{f:ap1bands} is generic.
At fixed $a$ the graph of the spectral band $\lambda_{\rm gr}(\kappa)$ is concave upwards as
a function of $\kappa$ for $c\in(c_-,c_+)$ and concave downwards outside this interval. Moreover
this change is the first occurrence of a negative eigenvalue for $P_{c,\gamma}(\kappa)$.
Thus the boundaries $c_\pm$ are determined by changes in sign of $\lambda_{\rm gr}''(0)$. Since
$\lambda_{\rm gr}'(0)=0$,  it is convenient to determine the sign of $\lambda_{\rm gr}''(0)$
by the sign of $\lambda_{\rm gr}''(\delta_\kappa)$ for $0<\delta_\kappa\ll1$.
The boundaries $c_\pm$ are thus determined as the values of $c$ for which $\PP(\delta_\kappa)$
is not invertible, i.e eigenvalues of the generalised linear eigenvalue problem
\begin{equation}
  \AP(\delta_\kappa) = c \BP(\delta_\kappa).
\end{equation}
The computations reported here were performed for $\delta_\kappa=10^{-2},10^{-3},10^{-4}$ and
the results were graphically indistinguishable. Figure \ref{f:P_pos}(a) shows shaded the region of
the $(c,|a|)$ plane where the operator $\PP(\kappa)$ is positive for all $\kappa$, and the
accuracy of the  asymptotic form for the boundary for small $a$ given by the expansion
\eqref{exact-c-plus-minus} in Theorem \ref{theorem-red-Ost}. Computations for the shaded region
have been performed for all $|a|\leq0.66$ to show that the region of positivity of $\PP(\kappa)$
extends effectively to the maximum amplitude of $|a|=2/3$.

Numerical computations for the modified reduced Ostrovsky equation (\ref{redModOst})
proceed analogously to those for the reduced Ostrovsky equation (\ref{redOst}).
Thus, the results will be omitted here except for figure \ref{f:P_pos}(b)
which gives the region of the $(c,|a|)$ plane in which the operator $\PP(\kappa)$
defined by (\ref{unperturbed-operator-perturbed-modified}) is positive for all $\kappa$.
Again, the asymptotic expansion for the boundary for small $a$ given by \eqref{exact-c-plus-minus-mod}
in Theorem \ref{theorem-red-Mod-Ost} is very accurate. Computations for the shaded region
have been performed for all $|a|\leq 1.27$ to show that the region of positivity of $\PP(\kappa)$
extends effectively to the maximum amplitude of $|a|=4/\pi\approx1.273$.

\begin{figure}
    \includegraphics[width=0.49\textwidth]{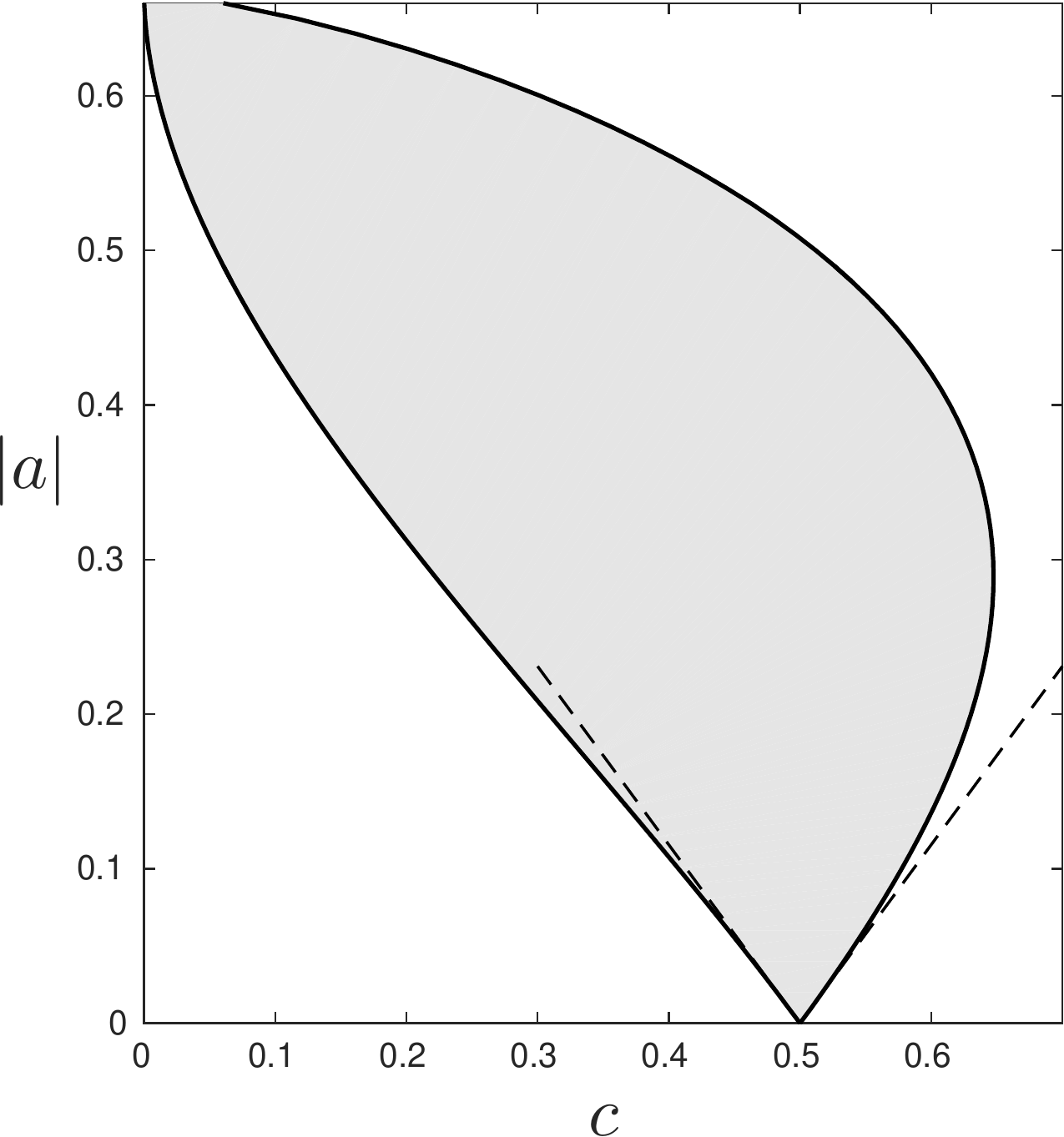}\hfill
    \includegraphics[width=0.49\textwidth]{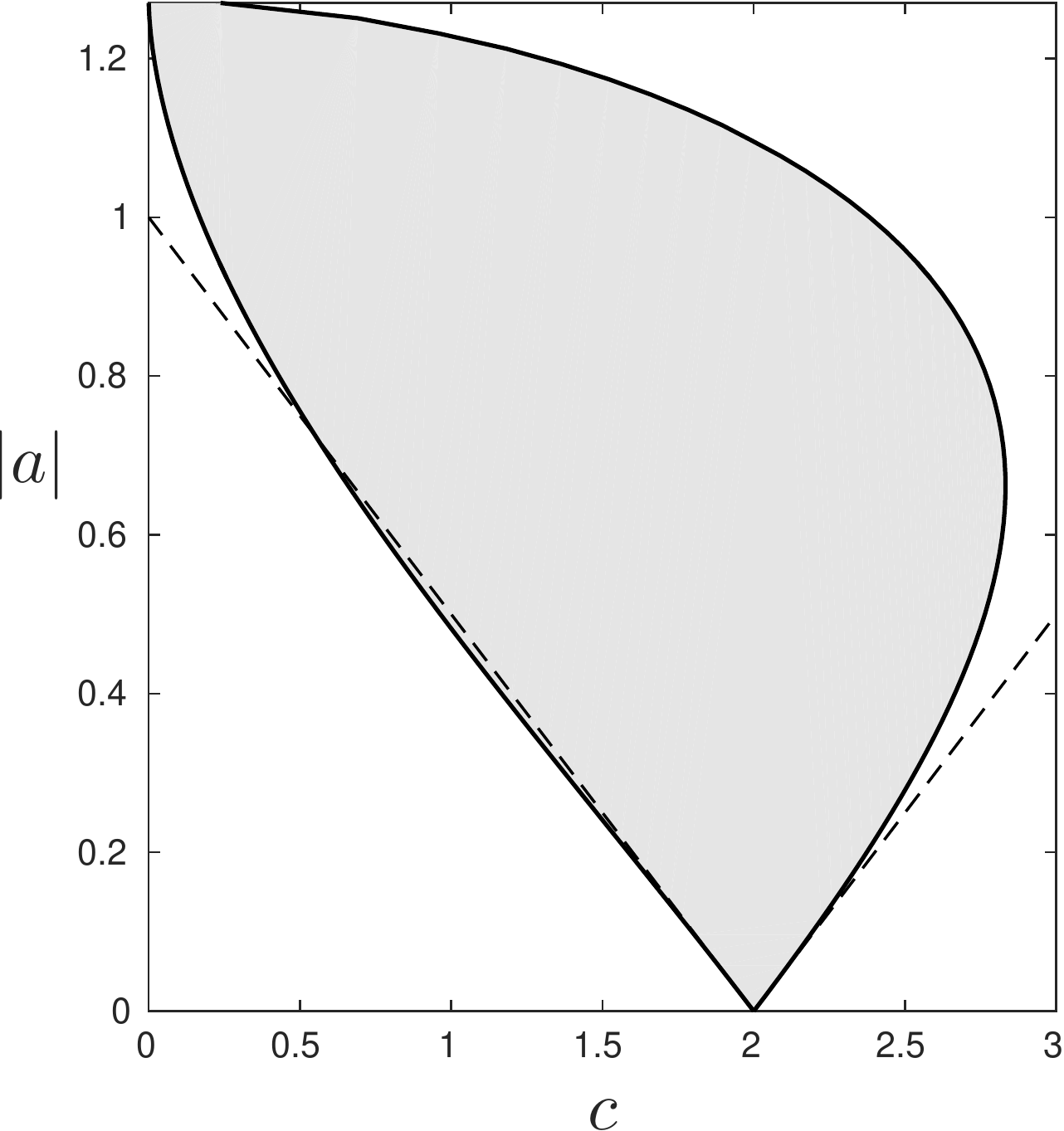}\\
    \null\hfill(a)\hspace{0.5\textwidth}(b)\hfill\null
    \caption{The  region of the $(c,|a|)$ plane where the operator $\PP(\kappa)$ is positive
    for all $\kappa$ is shown shaded for the reduced Ostrovsky equation (a) and
     the reduced modified Ostrovsky equation (b). The dashed lines show the asymptotic expansions
     for the boundary for small $|a|$ as given by \eqref{exact-c-plus-minus} and \eqref{exact-c-plus-minus-mod}
     respectively. \label{f:P_pos} }
\end{figure}

\section{Discussion}

The short-pulse equation (\ref{shortPulse}) can be considered as the focusing version
of the modified reduced Ostrovsky equation (\ref{redModOst}).
The short-pulse equation (\ref{shortPulse}) possesses the modulated pulse solutions,
which are localized in space and periodic in time \cite{SS06}. These solutions
can have arbitrary small amplitude and wide localization in space, when the solution
resembles modulated wave packets governed by the focusing nonlinear Schr\"{o}dinger
equation \cite{GrimPP}. Therefore, it is natural to suspect that the small-amplitude periodic waves
are unstable with respect to side-band modulations \cite{Ostrovsky}.
Although the short-pulse equation also possess higher-order conserved quantities \cite{Br,PelSak},
our method relying on construction of a Lyapunov-type energy functional 
should fail for periodic waves in the short-pulse equation
(\ref{shortPulse}). Here we show how precisely the method fails.

The periodic wave given by (\ref{trav-wave-mod}) satisfies the second-order differential equation
\begin{equation}
\label{second-order-short-pulse}
\frac{d}{dz} \left[ \left( \gamma + \frac{1}{2} U^2 \right) \frac{dU}{dz} \right] + U(z) = 0,
\end{equation}
which has no constraints on the amplitude of periodic waves. Looking at the Stokes expansions (\ref{Stokes-mod}),
we obtain the existence of $2\pi$-periodic smooth solutions $U$ of the differential equation (\ref{second-order-short-pulse})
for parameter $\gamma < 1$ satisfying the asymptotic expansion
\begin{equation}
\label{gamma-parameter-short-pulse}
\gamma = 1 - \frac{1}{8} a^2 + \mathcal{O}(a^4).
\end{equation}

Periodic waves are critical points of the two energy functionals
\begin{equation}
S_{\gamma}(u) := \| \partial_x^{-1} u \|_{L^2}^2 - \frac{1}{12} \| u \|_{L^4}^4 - \gamma \| u \|_{L^2}^2
\end{equation}
and
\begin{equation}
R_{\Gamma}(u) := \int (1 + u_x^2)^{1/2} dz - \frac{1}{2 (\gamma^2 + 2 I)^{1/2}} \| u \|_{L^2}^2,
\end{equation}
where $I$ is the first-order invariant associated with the differential equation (\ref{second-order-short-pulse}).

The second variations of the energy functionals $S_{\gamma}$ and $R_{\Gamma}$ are defined by
the linear operators $L_{\gamma}$ and $M_{\gamma}$ in the following explicit form:
$$
L_{\gamma} := -\partial_z^{-2} - \gamma - \frac{1}{2} U^2 : \; L^2_{\rm per,zero} \to L^2_{\rm per,zero}.
$$
and
$$
M_{\gamma} := -\frac{1}{2} (\gamma^2 + 2 I)^{-1/2} - \frac{1}{2} \partial_z (1+(U')^2)^{-3/2} \partial_z : \;
H^2_{\rm per, zero} \to L^2_{\rm per, zero}.
$$
Compared to the operators $L_{\gamma = 1}$ and $M_{\gamma = 1}$ in Lemmas \ref{lemma-spectrum-S-mod}
and \ref{lemma-spectrum-R-mod}, the operator $M_{\gamma = 1}$ has an infinite number of positive eigenvalues 
and a finite number of negative eigenvalues.
The splitting of the zero eigenvalue is studied by the regular asymptotic expansion (\ref{expansion-same}).
For the operator $L_{\gamma}$, we obtain $\lambda_2 = -\frac{1}{4}$ instead of (\ref{lambda-2-S-mod}).
For the operator $M_{\gamma}$, we still obtain $\lambda_2 = -\frac{3}{8}$ as in (\ref{lambda-2-R-mod}).

Defining $\Lambda_{c,\gamma}(u) := S_{\gamma}(u) + c R_{\Gamma}(u)$ to reflect the change in the sign for $M_{\gamma}$,
we obtain the result of Lemma \ref{lemma-positivity-mod} for $c = c_0 = 2$. Therefore, Assumption \ref{assumption-1} is still satisfied.
However, expansion (\ref{eigenvalue-positive}) of Assumption \ref{assumption-2} gives now the negative eigenvalue $\lambda(a,c)$
with
$$
\lambda_2(c) = -\frac{2+3c}{8}.
$$
Therefore, one of the spectral bands of the linear operator $K_{c,\gamma} := L_{\gamma} + c M_{\gamma}$
is now negative near $\kappa = 0$ for every small nonzero amplitude $a$ of the periodic wave with the profile $U$.
As a result, $\Lambda_{c,\gamma}$ is not positive for every $c \in \mathbb{R}$ if $|\gamma - 1|$ is
sufficiently small. This indicates that $\Lambda_{c,\gamma}$ is no longer a Lyapunov-type energy functional
for the periodic waves of the short-pulse equation (\ref{shortPulse}), which are modulationally unstable.

As an open problem, we mention that the modulated pulse solutions of the short-pulse equation (\ref{shortPulse})
are reported to be stable in numerical simulations \cite{Schafer1,LPS2}. Given integrability structure
of the short-pulse equation, it may be possible to prove orbital stability of the modulated pulse solutions analytically.
A similar proof of nonlinear orbital stability of breathers in the modified KdV equation was recently developed in \cite{Munoz}.
However, there are technical obstacles to extend this proof to breathers in the sine--Gordon equation \cite{AM},
and hence to the short-pulse equation (\ref{shortPulse}), which corresponds to
the sine--Gordon equation in characteristic coordinates \cite{PelSak}.

\vspace{0.5cm}

\noindent{\bf Acknowledgements.} D.P. is supported by the LMS Visiting Scheme 2.
He thanks members of the Institute of Mathematics at UCL for
hospitality during his visit (May-June, 2015).

\end{document}